\newcommand{\ngs}{ G^N(\mathbb{R}^m)   }
\newcommand{\cm}{C^{1\text{-var}} ([0, 1];\, \mathbb{R}^m)}
\newtheorem{theorem}{Theorem}[section]
\newtheorem{Def}[theorem]{Definition}
\newtheorem{thm}[theorem]{Theorem}
\newtheorem{prop}[theorem]{Proposition}
\newtheorem{cor}[theorem]{Corollary}
\newtheorem{lemma}[theorem]{Lemma}
\newtheorem{remark}[theorem]{Remark}
\newtheorem{example}[theorem]{Example}
\numberwithin{equation}{section}
\newenvironment{proof}[1][Proof]{\textbf{#1.} }{ \hfill $\Box$}
\def\tvr{\text{-var}}
\def\al{\alpha}
 \def\mE{\mathbb{E}}
\def\wt{\widetilde}
\def \eref#1{\hbox{(\ref{#1})}}
\def\RR{\mathbb{R}}
\def\NN{\mathbb{N}}
\def\EE{\mathbb{E}}
\def\wt{\widetilde}
\def\bfx{{\bf x}}
\def\bfz{{\bf z}}
\def\be{{\beta}}
\def\al{{\alpha}}
\def\be{{\beta}}
\def\Ga{{\Gamma}}
\def \eref#1{\hbox{(\ref{#1})}}
\def\th{{\theta}}
\begin{document}

 \title{Taylor schemes  for rough differential equations and   fractional diffusions}
 \date{}\;
  \author{  {\sc Yaozhong Hu}\thanks{Y.  Hu is
partially supported by a grant from the Simons Foundation
\#209206.},   \;\;  {\sc Yanghui Liu}, \;\;  {\sc David Nualart}\thanks{D. Nualart is supported by the NSF grant  DMS 1512891 and the ARO grant FED0070445. \newline
 \textbf{Keywords.}   Rough differential equations, numerical approximation scheme,
 convergence rate, generalized Leibniz rule, multiple   integrals, fractional Brownian motion.}  \\ Department of Mathematics \\
	The University of Kansas \\
Lawrence, Kansas, 66045 USA}

 \maketitle
 \begin{abstract}
In this paper, we   study two   variations  of the time discrete Taylor   schemes   for        rough differential equations and for   stochastic differential equations driven by fractional Brownian motions. One is the incomplete Taylor scheme  which
  excludes some   terms  of an Taylor scheme in its recursive computation  so as to reduce the computation time.
The other one
is to add some deterministic terms to an incomplete Taylor scheme to improve the mean rate of convergence.
Almost sure rate of   convergence and   $L_p$-rate of convergence are obtained for the incomplete
Taylor schemes.  Almost sure rate is expressed in terms of the H\"older exponents of the driving signals
and the $L_p$-rate is expressed by the Hurst parameters.  Both rates involves with
 the incomplete Taylor scheme in a very explicit way and then  provide us with the best incomplete schemes,
 depending on that one needs  the almost sure convergence or one needs  $L_p$-convergence.
As in the smooth case,  general
   Taylor schemes are always complicated to deal with. The incomplete Taylor scheme
   is even more sophisticated to analyze. A new feature of our
   approach is  the  explicit expression of  the error functions
   which will be easier to study.
 Estimates for  multiple integrals and  formulas for  the iterated vector fields are
   obtained to analyze the error functions and  then to obtain the rates  of convergence.
     \end{abstract}

\section{Introduction}\label{section1}
Consider  the $d$-dimensional   differential equation
 \begin{equation}\label{e.1.1}
dy_{t}  =  V (y_{t}) dx_{t},   \quad t \in [0,T], \quad y_{0} \in \RR,
\end{equation}
where $  x=(x^{1},\dots, x^{m})  $ is H\"older continuous of order $\beta>1/2$   and
$V = (V^{i}_{j})_{1\leq i\leq d, 1\leq j \leq m}$ is a continuous mapping from $\RR^{d}$ to $\RR^{d   \times  m}$.  It is  well-known
(see  \cite{Ly} and  \cite{NuRa}) that if   $ V $ is continuously differentiable
and its  partial derivatives   are  bounded and locally H\"older continuous of
order  $\delta> \frac{1}{\beta} -1$,   then   equation \eref{e.1.1} has
 a unique solution  that  is H\"older continuous of order $\beta$.

Our goal  in this paper is to study     numerical approximations for  the solution of   equation \eref{e.1.1}.   We    briefly recall  the way to obtain
some general numerical approximation schemes for   equation \eref{e.1.1}.

Assume that $V$ has sufficient regularity.  A simple Taylor expansion
(iterated application of chain rule) leads,   when $t$ is sufficiently
close to $s$,  to the following  approximation
 	\begin{eqnarray} \label{e.taylor-s-t}
y_t &\approx & y_s + \mathcal{E}_{s,t}^{(N)} (y_{s})\,,
\end{eqnarray}
where
\begin{eqnarray*} 
\mathcal{E}_{s,t}^{(N)} (y) &:=&  \sum_{r=1}^N
\sum_{\substack{  ( j_{1},\dots, j_{r})  \in \Gamma_r   } } {\mathcal{V}}_{j_1} \cdots {\mathcal{V}}_{j_{r}} I (y ) \int_s^t \int_s^{u_r} \cdots \int_s^{u_2} dx^{j_1}_{u_1} \cdots dx^{j_r}_{u_r}\,,\quad y\in \RR^d\,.
\end{eqnarray*}
 In this expression,  $\Gamma_r$ is the collection of multi-indices
 of length $r$ with elements in $\{1,2,\dots, m\}$, $I  $ is the identity function
 ($I(y)=y$) from $\RR^d$ to $\RR^d$ ,
 and ${\mathcal{V}}_j$ is the vector field
\begin{eqnarray}\label{eq 1.3i}
{\mathcal{V}}_j f = \sum_{i=1}^dV_j^i \partial_i f, ~~j=1,\dots, m,
\end{eqnarray}
where   $\partial_{i}$ denotes the differential operator $\frac{\partial}{\partial y^{i}} $, $i=1,\dots, d$
(we refer the reader to   \cite{BZ, FZ, hustochastics} for more details;
\cite{hustochastics} gives a Taylor    expansion with explicit form of the residual).

Let $0=t_0<t_1<\cdots<t_n=T$ be  any partition of the interval $[0, T]$.
On the interval $  [t_k, t_{k+1}]$, we  may  use
$y^n_{t_k}+ \mathcal{E}_{ t_k,   t  }^{(N)}  ( y^n_{t_k   }   )$ to approximate $y_{t }$.
We iterate this on each subinterval  of the partition to obtain the following recursive scheme  for \eref{e.1.1},
 	 \begin{eqnarray} \label{eqn scheme1-interval}
	 y^{n}_{t }  &=&
y^n_{t_k}+ \mathcal{E}_{ t_k,   t  }^{(N)}  ( y^n_{t_k   }   ), \quad t \in [t_{k},t_{k+1}],   	 \end{eqnarray}
for   $  k=0,1,\dots, n-1$, with $y^{n}_{0} = y_{0}$,
  In this paper,
 we shall take $t_{k} = \frac{T}{n}k$, $k=0,1\dots, n$.  The recursive  scheme \eref{eqn scheme1-interval}
 can also be written as
	 \begin{eqnarray}\label{eqn scheme1}
	 y^{n}_{t }  &=&
y_0+	\sum_{k=0}^{\lfloor  \frac{nt}{T}   \rfloor } \mathcal{E}_{ t_k , t_{k+1} \wedge t   }^{(N)}  ( y^n_{t_k   }  )\,,
	 \end{eqnarray}
	 where $a \wedge b$ is the smaller of the numbers $a$ and $b$ and $  \lfloor  a   \rfloor $ is the integer part of $a$.
The recursive scheme   \eref{eqn scheme1-interval}  is usually called
 the \emph{time discrete Taylor     scheme} or simply \emph{Taylor scheme}, of order $N$.
Note that the  interpolation on each interval $[t_k, t_{k+1}]$ used  in   \eref{eqn scheme1-interval}
 and \eref{eqn scheme1}  guarantees  that   the numerical scheme  has the same convergence rate  at non-discretization points $t\in [0,T] \setminus D$ as
 at the discretization points $t\in D = \{  \frac kn T, \, k=0,1,\dots, n \}$.
	 	
Taylor scheme \eref{eqn scheme1} has been    considered in \cite{FV}  when
 $x$ is a \emph{weak geometric $p$-rough path} (see Section \ref{section7}),
$p\geq 1$.
It  is proved    that under some additional
regularity assumptions on $V$,  for $N\geq \lfloor p \rfloor$,
the rate of convergence of $y_t^n$ to $y_t$ is
$n^{1-(N+1)/p }$.
Clearly, the larger the   $N$ in \eref{eqn scheme1} is
the higher will be  the convergence rate.
If  $N=1$,  then   \eref{eqn scheme1} is reduced to the  classical Euler scheme
\begin{eqnarray} \label{eq 1.6i}
y^n_{t } = y^{n}_{t_{k}} +   V(y^n_{t_k}) (x_{t } - x_{t_{k}}), \quad  t \in [t_{k},t_{k+1}],  \end{eqnarray}
 for  $ k=0,1,\dots, n-1$, with  $y^n_0=y_0$.
This classical Euler scheme    has  been studied, for instance, in \cite{HLN,Mishura,Neuenkirch}.

\begin{remark} With an abuse of notation we shall use the same notation $y_t^n$ to denote
the approximation obtained by different schemes  when there is no confusion.
\end{remark}

When one of the  driving signals  $x$ is the   time, say $x^1(t)=t$,  and when the others are independent standard
Brownian motions, an important scheme is the so-called Milstein scheme, which  has the following
form
 \begin{eqnarray}\label{e.milstein}
	 y^{n}_{t }  &=&
y^n_{t_k} +	\bar {\mathcal{E}}_{ t_k, t }   ( y^n_{t_k  }  )\,, \quad t \in [t_{k},t_{k+1}], 	 \end{eqnarray}
	  for  $ k=0,1,\dots, n-1$, with  $y^n_0 =y_0$, 
where
\begin{eqnarray*} 
\bar {\mathcal{E}}_{ s,  t } (y) &:=&
\sum_{\substack{  j  \in    \Gamma_1   } } {\mathcal{V}}_{j }   I (y ) \int_s^t   dx^{j}_{u }  +
\sum_{\substack{  ( j ,  j')  \in\bar   \Gamma_2   } } {\mathcal{V}}_{j }   {\mathcal{V}}_{j'} I (y ) \int_s^t     \int_s^{u'} dx^{j }_{u }   dx^{j'}_{u'}
 \end{eqnarray*}
and   $\bar{\Gamma}_2 = \{(j ,  j')\in \Gamma_2: j ,  j'=2,\dots, m\}$. This scheme does not include the   terms    $\int_s^t \int_s^{u }   dx^{j }_{v }   dx^{j'}_{u }$,  where
one or both  of the $j $ and  $j'$ are   $1$.  In the Brownian motion case, it is well-known that the Milstein scheme
has the same rate of convergence as the order $2$ Taylor scheme while it requires fewer computations.

\medskip
This motivates us to ask the following question.
\smallskip

\noindent{\bf Question 1.} 
How to   eliminate as many terms  as possible    in $ \mathcal{E}_{s,t}^{(N)} \left( y \right)$ while keeping the same rate of
convergence?  \
More precisely,   we want to  find
subsets $\wt{\Gamma}_r\subseteq \Ga_r$ so that $\wt{\Gamma}_r$ contains as few  elements
as possible and  when we  replace
$ \mathcal{E}_{t_k  ,t_{k+1} \wedge t }^{(N)} \left( y_{t_k}^n \right)$ in the Taylor scheme \eref{eqn scheme1} by
$\wt{\mathcal{E}}_{t_k ,t_{k+1}  \wedge t  }^{(N)}\left( y_{t_k}^n \right)$, we have  the same rate of convergence
as that of the  original one.  Here
\begin{eqnarray}\label{e1.8i}
\wt{\mathcal{E}}_{s,t}^{(N)} (y) &:=& \sum_{r=1}^N \sum_{ (j_{1},\dots,j_{r}) \in \wt{\Gamma}_r } {\mathcal{V}}_{j_1} \cdots {\mathcal{V}}_{j_{r}} I (y ) \int_s^t \int_s^{u_r} \cdots \int_s^{u_2} dx^{j_1}_{u_1} \cdots dx^{j_r}_{u_r}\,.
\end{eqnarray}
We shall call such new Taylor scheme an {\it incomplete  Taylor scheme}, which has the following form:
\begin{eqnarray}\label{eq1.7}
y^{n}_{t }  &=&
y^n_{t_k}+ \wt{\mathcal{E}}^{(N)}_{ t_k,     t } ( y^n_{t_k   }   ),   \quad t \in [t_{k},t_{k+1}],  
\end{eqnarray}
for  $k=0,1,\dots, n-1$, with $y^n_0=y_0$.

We shall study the rate of convergence of   $y^{n}_{t}$ to $y_t$ for any choice of
$ \wt{\Gamma}_r$ in \eref{e1.8i}. Two types   of convergence will be studied in detail: almost sure
convergence (when the $x^j$  are H\"older continuous   with exponents
$\be_j$)  and the $L_p$-convergence (when the  $x^j$ are fractional Brownian
motions of Hurst parameters $H_j$). The rates  will be different for these two types  of
convergence.
Fix a set 
\begin{equation*} 
 \wt{\Gamma}=\cup_{r=1}^N \wt{\Gamma}_r, \quad 
N=\max\{|\al|\, : \al\in \wt{\Ga}\}\,,
\end{equation*}
where throughout the paper $|\al|$ denotes the length of the multi-index $\al$.
   The almost sure rate $\theta_{\wt{\Ga}} $ can
be expressed in terms of $\be_j$ (see \eref{e.def-theta} below) and the $L_p$-rate
$\rho_{\wt{\Ga}} $  can be expressed in term of
Hurst parameters $H_j$ (see \eref{e.def-theta-lp} below).  These two expressions lead to  the best choices of
$ \wt{\Gamma}_r$ in \eref{e1.8i}, depending on that one needs  the almost sure
convergence ($ \wt{\Gamma}_r$ is given by \eref{e.gamma-theta}
in Section 4) or one needs  $L_p$-convergence ($ \wt{\Gamma}_r$ is given by \eref{e.gamma-rho} in Section 6).

\medskip

To motivate our second problem, let us  recall that when the driving signals are fractional Brownian motions of Hurst
  parameter $H>1/2$,  the classical Euler scheme \eref{eq 1.6i}   has the exact convergence rate $n^{1-2H}$
  (see \cite{HLN,Neuenkirch}).  When we formally equal $H$   to $1/2$ (the standard Brownian motion case),
  we obtain no convergence!
  This demonstrates on one hand, that in dealing with the incomplete  Taylor schemes we may not be able to
  use the same ideas  from the Brownian motion case
  (\cite{Hu,KP}).  This is largely due to the lack of the martingale property of the driving signals.
  We will pay special attention to this fact.  On the other hand,  to improve the Euler scheme  for the fractional Brownian motion case, a modified  Euler scheme
is proposed and investigated in   \cite{HLN}:
 \begin{eqnarray}\label{eq1.8}
y^n_{t } = y^{n}_{t_{k}} +   V(y^n_{t_k}) (x_{t } - x_{t_{k}})  + \frac12 \sum_{j=1}^{m} ( \partial V_{j} V_{j}) (y^{n}_{t_{k} }) (  t-t_{k})^{2H}\,, \quad t \in [t_{k},t_{k+1}],
\end{eqnarray}
 for $k=0, 1, \dots, n-1$, with $y^n_0=y_0$.
 Here we denote $V= (V_{1},\dots, V_{m})$. It has been shown that this modified Euler scheme has a higher rate of convergence
 than the classical Euler scheme. In particular, it is proved in \cite{HLN} that 
 for any $t\in [0,T]$,
\begin{eqnarray}\label{e1.10i}
\mE(|y_{t} - y^{n}_{t}|^{p} )^{1/p} &\leq& \begin{cases}
      K  n^{  1/2-2H     }
  &\qquad \hbox{if}\quad    { \frac12< H < \frac34 }\, ,\\
    K  n^{  -1     } \sqrt{\log n}
  &\qquad \hbox{if}\quad    {  H = \frac34 }\, ,\\
 K  n^{         -1        }
   &\qquad \hbox{if}\quad      { \frac34  < H   < 1   }\, ,\\
\end{cases}
\end{eqnarray}
under   proper regularity assumptions on $V$,
where $K$ is a constant independent of $n$.

The scheme  \eref{eq1.8}  is obtained by adding to the classical Euler scheme \eref{eq 1.6i}
a  deterministic term (note that for simplicity, we   assume here that $x$ is a standard $m$-dimensional fractional Brownian motion).
The inclusion in \eref{eq1.8} of the deterministic terms $\frac12 \sum_{j=1}^{m} ( \partial V_{j} V_{j}) (y^{n}_{t_{k} }) (t-t_{k})^{2H}$, as opposed to double integral terms as in \eref{e.milstein}, helps
to save  computation time due to the  evaluation of  double stochastic integrals.
 It is then natural to ask the following question:
 \smallskip

\noindent{\bf Question 2.} 
Can we add   some deterministic terms to  the incomplete  Taylor scheme \eref{eq1.7}  so as to increase
the rate of convergence?

    We shall answer this  question in the case when the  $x^j$'s are fractional Brownian motions
    or $x^j_t=t$  by  introducing the following  {\it modified   Taylor  scheme}:
    \begin{equation}\label{eq1.7-modify}
y^{n}_{t }  =
y^n_{t_k}+ \wt{\mathcal{E}}_{t_k, t }^{(N)} (y^n_{t_k} )+   
\sum_{\substack{  (j_{1},\dots, j_{r})  \in \Gamma ' } }
{\mathcal{V}}_{j_1} \cdots {\mathcal{V}}_{j_{r}} I (y^n_{t_k}  )D_{j_{1},\dots, j_{r}}(t-t_{k})  ,
 \end{equation}
for $t\in [t_{k},t_{k+1}]$,   $   k=0,1,\dots, n-1 
$, $y^n_0=y_0$.
The above set $ \Gamma ' $ is a finite subset of $   \Gamma \setminus   \wt{\Gamma} $, where $\Gamma=\bigcup_{r=1} ^\infty \Gamma_r$ and $\wt\Gamma=\bigcup_{r=1}^N \wt\Gamma_r$,  that  will be given
explicitly in  Section \ref{section 7}. The explicit form of  $D_{j_{1},\dots, j_{r}}(t)$,
  $t\in [0,T]$,  
is given in   Remark \ref{remark5.15}.

The main tasks of this paper are  to  establish  the almost sure and the $L_p$-rate  of convergence results  for  the incomplete  Taylor  scheme \eref{eq1.7}
and the modified Taylor scheme \eref{eq1.7-modify}. It is worthy  to emphasize that
the modified Taylor scheme \eref{eq1.7-modify}  has a higher $L_p$-rate of convergence than    the incomplete  Taylor scheme  \eref{eq1.7} (compare Theorem \ref{thm7.2} with Theorem \ref{thm6.3}).
 We also point out  that our result extends that of \cite{HLN}: in the simplest case $N=1$,
our result  recovers    the upper bound estimate \eref{e1.10i} (see Example \ref{ex6.10}).

\medskip

The remainder of the Taylor expansion \eref{e.taylor-s-t} has an involved expression  (see \cite{BZ}).
If we throw some terms away, then
the remainder is even  more complicated. In the study of the convergence rate for the schemes \eref{eq1.7}
and  \eref{eq1.7-modify}
it is necessary to
investigate   this type of remainders.  We shall express the error 
in the following form:   %
%
  \begin{eqnarray}\label{eq 1.8}
y_t-y^{n}_{t} = \Phi_t^{-1} \int_0^t \Phi_s d  R_s  ,
  \end{eqnarray}
where $\Phi  \in \RR^{d\times d}$ is the   solution of a linear   differential equation, $\Phi^{-1}$ is its inverse, that is, $\Phi \Phi^{-1}\equiv I$, and $R_t$ is the remainder term, whose upper bound usually provides the desired convergence rate.
  The study  of \eref{eq 1.8} is based on the algebraic properties of   equation \eref{e.1.1}, which are interesting in its own right (see Section \ref{section shuffle}-\ref{subsection 4.3}).   It is well know that for $i_{1}, \dots, i_{r}$, $j_{1}, \dots, j_{r'} =1,\dots , m$, the product   $ \int_s^t   \cdots \int_s^{u_2} dx^{i_{1}}_{u_1}   \cdots   dx^{i_{r}}_{u_r}     $ and  $ \int_s^t   \cdots \int_s^{u_2} dx^{j_{1}}_{u_1}   \cdots   dx^{j_{r'}}_{u_{r'}}    $ is equal to the summation of integrals of the form   $ \int_s^t   \cdots \int_s^{u_2} dx^{l_{1}}_{u_1}   \cdots   dx^{l_{r+r'}}_{u_{r+r'}}    $, where the summation  runs over the multi-indices
  $ (l_{1},\dots, l_{r+r'} ) $  obtained by shuffling the two multi-indices   $(i_{1}, \dots, i_{r})$ and $(j_{1}, \dots, j_{r'} ) $. The study of the error function $R_t$ needs an expansion of the multiple integral
  $\int^{s }_{ \tau }   \int^{s_p }_{\tau}  \cdots   \int^{s_{2 } }_{\tau}d g^{\gamma^1}_{\tau,  s_1}  \cdots dg^{\gamma^{p-1}}_{\tau,  s_{p-1}}  dg^{\gamma^p }_{\tau,  s_p}$, where each $g^{\gamma }_{\tau,  s }$ is itself
  a multiple integral. This expansion of   multiple integral of the multiple integrals can also be done by the
  shuffle-type permutations.
   A key ingredient  in our proof is to establish a relation between
    these shuffle-type permutations with  the permutations
  when we expand the iterated vector fields ${\mathcal{V}}_{j_1} \cdots {\mathcal{V}}_{j_{r}} I (y )$  through  a generalized Leibniz rule (see Propositions \ref{lem 2.1}, \ref{prop 4} and \ref{lem 2.5}).

To obtain the rate of convergence for the modified Taylor scheme \eref{eq1.7-modify}   we need some
 subtle $L_{2}$-estimates of a multiple Riemann-Stieltjes integral
 \begin{eqnarray*}
J_{r}(\mathcal{A}) &:= & \int_{0}^{T} \cdots \int_{0}^{T}  \mathbf{1}_{\mathcal{A}}  d B^{1}_{s_{1}} \cdots dB^{r}_{s_{r}},
\end{eqnarray*}
 and its centralization
 \begin{eqnarray*}
\wt{J}_{r} (\mathcal{A})&:=& J_{r} (\mathcal{A}) -  \mE [ J_{r} (\mathcal{A})  ],
\end{eqnarray*}
for $\mathcal{A} = \bigcup_{k=0}^{n-1} \{(s_{1},\dots, s_{r}): t_{k} \leq s_{1}<\cdots<s_{r}\leq t_{k+1} \}$,
where $B^{j}$, $j=1,\dots, r$ is either a fractional Brownian motion with Hurst parameter larger than $1/2$ or the identity function. Note that  $J_{r} (\mathcal{A})$ is well defined  as an integrated Riemann-Stieltjes integral. 
  The $L_{2}$-estimates are made possible by a monotonicity property of the multiple  integral obtained in Section \ref{section5.3}; that is, for $\mathcal{A}' = \bigcup_{k=0}^{n-1} [t_{k},t_{k+1}]^{r}$, the $L_{2}$-norms of $J_{r}(\mathcal{A})$ and $\wt{J}_{r}(\mathcal{A})$ are less than those of $J_{r}(\mathcal{A}')$ and $\wt{J}_{r}(\mathcal{A}')$, respectively (see Sections \ref{section5.3} and \ref{section5.4}).

The paper is organized as follows.
In  Section \ref{section bijection},  we introduce some shuffle-type permutations  and then apply them  to   expand
    the multiple integral  of the multiple integrals and we also  derive a generalized   Leibniz rule
    for iterated vector fields.  
 With these preparations we  derive, in Section \ref{section 5},   an explicit expression for the error function for  the scheme     \eref{eq1.7}.  In Section \ref{section 6}, we  obtain
   the almost sure convergence rate for  the     scheme \eref{eq1.7}.  In Section \ref{section5}, we prove some $L_{p}$-estimate results.
   These   estimates are  applied to obtain
   the $L_p$-convergence rate for  the   incomplete   scheme \eref{eq1.7}  in subsection \ref{section6}
   and the $L_p$-convergence rate for  the  modified   Taylor scheme \eref{eq1.7-modify} in subsection \ref{section 7}.  In Section \ref{section7}, we generalize the results in Section \ref{section 5} to the rough paths case.
  In the appendix, we provide   some necessary estimates of some multiple integrals and the solution of some differential equations.
  
  Along the paper we denote by $C$ a generic constant, that may be different form line to line, and which might  depend on $T$ and the vector fields $V_j^i$.  

  \section{Multiple  integral of multiple integrals  and   generalized  Leibniz rule  }\label{section bijection}
The primary aim of this section is to prove an identity on multiple integral of multiple integrals   (see Proposition \ref{prop 4}) and a generalized Leibniz rule (see Proposition \ref{lem 2.5}). To do so, we   need to introduce some shuffle-type permutations and their inverses (see Section \ref{section shuffle}).

\subsection{Shuffle-type  permutations and their inverses}\label{section shuffle}

  Let   $\alpha = (\al_{1}, \dots, \al_{r}) \in \Gamma_r$, where $\Gamma_r$ is the collection of    multi-indices of length $r$  with elements in $ \{1,\dots, m\} $. Take   $ \vec{l}= (l_1,\dots, l_p) $ such that $ 1\leq l_1<\cdots<l_p = r$,   $ p\leq r$.
 Assume that $  f^i_j  \in C^{r-p} (\RR)$, $i=1,\dots, p$, $j=1,\dots, m$.
 As a motivation,  we first   consider the following expression\,:
  \begin{eqnarray}\label{eq 4.1}
   {\mathcal{V}}_{\alpha_{0, l_1  } }
       \left( f^{1 }_{\alpha_{l_1}  } \cdots
             {\mathcal{V}}_{\alpha_{ l_{p-2}, l_{p-1}} } \left(  f^{ p-1 }_{\alpha_{ l_{p-1} } }   {\mathcal{V}}_{\alpha_{ l_{p-1}, l_p } }  f^{ p}_{\alpha_{l_p }  }  \right)
  \right).
  \end{eqnarray}
  Here we denote $\al_{i,j}:=(\al_{i+1},\dots, \al_{j-1} )$,      ${\mathcal{V}}_{j_1,\dots, j_k} := {\mathcal{V}}_{j_1}\cdots {\mathcal{V}}_{j_k}$, and
   recall that  ${\mathcal{V}}_j$ is the differential operator defined in \eref{eq 1.3i}. Note that the subindex of $\al$ in each element     in \eref{eq 4.1}, either an operator  or a function, identifies the location of this element in \eref{eq 4.1}. For example, $\mathcal{V}_{\al_{j}}$ is the $j$th element  and $f^{i}_{\al_{l_{i}}}$ is the $l_{i}$th element.

    It is easy to verify that $\mathcal{V}_{j}$ satisfies the product rule, that is, $\mathcal{V}_{j} (fg) = g \mathcal{V}_{j}f+ f \mathcal{V}_{j}g$ for $f,g \in C^{1}$.
By applying the product rule to     \eref{eq 4.1},  the operators ${\mathcal{V}}_{\al_j }$, $j \in \{1,\dots, r\} \setminus \{l_1,\dots, l_p\}$ act on   functions $f^i_{\al_{ l_i} }$, $i=1,\dots, p$, in such a way   that:

\noindent(i)
for  $j$ such that $l_{i-1}<j<l_{i}$, the operator ${\mathcal{V}}_{\al_j }$ act on one of the functions $f^i_{\al_{i}}$,  $\dots$,  $f^p_{\al_{p}}$\,;

\noindent  (ii) if two operators ${\mathcal{V}}_i$ and ${\mathcal{V}}_j$ act on the same function $f^k_{\al_{l_k} }$ then their order in \eref{eq 4.1} is kept.

\smallskip

\noindent Note that we take   $l_0 =0$ in (i). The quantity \eref{eq 4.1}
  is then expanded into the summation of quantities of the following form,
 \begin{eqnarray}\label{eq 4.2i}
\left( {\mathcal{V}}_{\al'_{ 0,\tau_1 } } f^1_{\al'_{ \tau_1} } \right)
\cdots
\left( {\mathcal{V}}_{\al'_{\tau_{p-1},\tau_p } } f^p_{\al'_{ \tau_p } } \right),
\end{eqnarray}
 where $\al'$ is some permutation of $\al$ such that $\al_{\tau_{i}}' = \al_{l_{i}}$, $i=1,\dots, p$, and $  (\tau_1,\dots, \tau_p)   \, $ are constants such that $1\leq \tau_1 < \cdots < \tau_p     = r $.
Denote by ${\mu}(i)$ the new location of the $i$th element of \eref{eq 4.1} in \eref{eq 4.2i}, then $\mu$
is
 the permutation of $\{1,2,\dots, r\}$ such that $ \al = \al' \circ {\mu}  $. In particular, we have $\mu(l_{i}) = \tau_{i}$, $i=1,\dots, p$.
Each quantity of the form \eref{eq 4.2i} obtained from applying the product rule to \eref{eq 4.1} is then identified with a permutation $\mu $ on $\{1,\dots, r\}$.  It is easy to see that these permutations satisfy:

\medskip
\noindent
{\bf Rule 1.}  $ {\mu}(l_i) < {\mu}(l_{i+1}) $, $i=1,\dots, p$\,;

\smallskip
\noindent
{\bf Rule 2.} 
   ${\mu}( y )  <  {\mu}(y' ) $
   \emph{if  $y<y'$ and $  {\mu}(y), {\mu} (y') \in I_i $ for some $i$, where $(I_i,~ i=1,\dots, p)$ is the partition of $\{1,\dots, r\}$ defined as follows}
   \begin{eqnarray}\label{e.3.3}
   I_1= \{1, \dots, {\mu}(l_1) \}; ~~I_i = \{{\mu}(l_{i-1}) +1 , \dots, {\mu}(l_i) \}, ~~i=2,\dots, p\,.
   \end{eqnarray}
 
 \medskip
  In fact, Rule 2 is  the ``translation'' of condition (ii) in terms of $\mu$ and Rule 1 is required to fix the ordering of the terms $\left( {\mathcal{V}}_{\al'_{ \tau_{i-1},\tau_{i } } } f^1_{\al'_{ \tau_{i}} } \right)$, $i=1,\dots,p$, in \eref{eq 4.2i}.

   The ``translation'' of condition (i) in terms of $\mu$ is:

\medskip
\noindent
{\bf Rule 3.} 
   ${\mu}( l_{i-1})< {\mu}(y)  $ \emph{if} ~$ l_{i-1}  < y<l_{i} $\,.

\medskip
  This rule is implied by Rule 1 and 2:
  \begin{lemma}\label{lem4.1}
    Assume that $\mu$ is a permutation of $\{1,\dots, r\}$ that satisfies Rule 1 and 2. Then $\mu$ also satisfies  Rule 3.
      \end{lemma}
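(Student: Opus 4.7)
The plan is to argue by contradiction: assume $\mu$ satisfies Rules 1 and 2 but fails Rule 3, so there exist $i$ and $y$ with $l_{i-1}<y<l_i$ yet $\mu(y)\le \mu(l_{i-1})$ (the case $i=1$ is trivial under the convention $\mu(l_0)=0$). The first step is to exploit Rule 1 to pinpoint $\mu(l_{i-1})$ inside the partition \eref{e.3.3}. By the definition of the blocks $I_k$ and the fact that Rule 1 forces $\mu(l_1)<\mu(l_2)<\cdots<\mu(l_p)$, one sees that $\mu(l_k)=\max I_k$ for every $k$. Consequently, the assumption $\mu(y)\le \mu(l_{i-1})=\max I_{i-1}$ implies that $\mu(y)$ lies in $I_1\cup\cdots\cup I_{i-1}$, so there is some index $j\le i-1$ with $\mu(y)\in I_j$.

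The second step is to derive a contradiction from Rule 2 applied to the pair $(l_j,y)$. Since $j\le i-1$, we have $l_j\le l_{i-1}<y$, so in particular $l_j<y$ and $y\ne l_j$. Moreover both $\mu(l_j)$ and $\mu(y)$ belong to the same block $I_j$. Rule 2 therefore forces $\mu(l_j)<\mu(y)$. On the other hand $\mu(l_j)=\max I_j\ge \mu(y)$, which is the desired contradiction. Hence Rule 3 must hold.

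I do not anticipate a serious obstacle; the only thing to be careful about is the boundary case $i=1$ and the observation that Rule 1 is what upgrades the partition definition to the identity $\mu(l_k)=\max I_k$, which is the hinge between the two rules. Once that identity is in hand, Rule 2 does all the remaining work in a single line.
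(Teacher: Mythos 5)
Your proof is correct and follows essentially the same route as the paper: locate $\mu(y)$ in a block $I_j$ with $j\le i-1$, note $\mu(l_j)=\max I_j$, and apply Rule 2 to the pair $(l_j,y)$ together with $l_j\le l_{i-1}<y$ to reach a contradiction. The only cosmetic difference is that you invoke Rule 2 directly while the paper uses its contrapositive.
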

  \begin{proof}
  We take $y$ such that $ l_{i-1}<y $. Since $\mu$ is a bijection, we have $\mu(l_{i-1}) \neq \mu(y)$.
  Suppose that ${\mu}(l_{i-1})>{\mu}(y) $. Then there exists $j$ such that ${\mu}(l_j) , {\mu}(y) \in I_j $ and $ {\mu}(l_{i-1})\geq {\mu}(l_j)>{\mu}(y)$. By   Rule 2 we have $l_j>y$. On the other hand,   Rule  1 implies that $l_{i-1}\geq l_j$. So we obtain $l_{i-1}>y$, which contradicts   the assumption.
  \end{proof}

  \medskip

 We are ready to define the shuffle-type permutations.
\begin{Def}\label{def2.2}
Take  $ \vec{l}= (l_1,\dots, l_p) $ such that $ 1\leq l_1<\cdots<l_p = r$.
We define $\Theta_r(\vec{l})$ as the collection of all permutations  of $\{1,\dots, r\}$ that satisfy Rule 1 and 2.
Take  $\vec{\tau} = (\tau_1,\dots, \tau_p)   \, $     such that $1\leq \tau_1 < \cdots < \tau_p     = r $. We define  $\Theta_r(\vec{l}; \vec{\tau})$ as the collection of permutations in $\Theta_r(\vec{l})$ such that ${\mu}(l_i) =\tau_i$, $i=1,\dots, p$. \end{Def}
Note   that  the set $\Theta_r (\vec{l} \,; \vec{\tau}  ) $     could be   empty   for some   $l_1,
\dots, l_p$ and $\tau_1, \dots, \tau_p$\,.
	
	\medskip
	
  According to the above discussion,    we have the following result:
    \begin{lemma}\label{lem2.3}
Take   $\al=(\al_{1},\dots, \al_{r}) \in \Gamma_r$, $\vec{l} = (l_{1},\dots, l_{p})$ such that $ 1\leq l_{1} <\cdots<l_{p}=r$ and $f^{i}_{j} \in C^{r-p}$, $i=1,\dots, p$, $j=1,\dots, m$. Then the quantity \eref{eq 4.1} is equal to
  \begin{eqnarray*}
     \sum_{
 {    {\mu} \in \Theta_r ( \vec{l} \, )
   }
    }
\left(    {\mathcal{V}}_{ \al \circ {\mu}^{-1} ( 0 , {\mu}(l_1 ) ) } f^1_{ \al_{  l_1 }  } \right)
    \dots
  \left(     {\mathcal{V}}_{ \al \circ {\mu}^{-1}({\mu}(l_{p-1 } ) , {\mu}(l_p) ) } f^p_{ \al_{  l_p  }  } \right)
      \end{eqnarray*}
     or
      \begin{eqnarray*}
\sum_{
      1\leq \tau_1 < \cdots < \tau_p   = r
    }
      \sum_{
    {\mu} \in \Theta_r ( \vec{l}  ; \, \vec{\tau}\,)
     }
      \left(    {\mathcal{V}}_{\al \circ {\mu}^{-1} ( 0,  \tau_1 ) } f^{1}_{\al \circ {\mu}^{-1} (   \tau_1 )  } \right)
    \dots
  \left(     {\mathcal{V}}_{\al \circ {\mu}^{-1}( \tau_{p -1} , \tau_p ) } f^{p}_{\al \circ {\mu}^{-1} (    \tau_p  )  } \right).
\end{eqnarray*}
\end{lemma}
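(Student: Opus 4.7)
My plan is to prove the first identity (the sum over $\mu \in \Theta_r(\vec{l}\,)$) by induction on $p$, and then deduce the second identity by partitioning $\Theta_r(\vec{l}\,)$ according to the values $\tau_i := \mu(l_i)$.

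For the base case $p = 1$, we have $l_1 = r$, so the left-hand side reduces to $\mathcal{V}_{\alpha_1} \cdots \mathcal{V}_{\alpha_{r-1}} f^1_{\alpha_r}$; Rule 1 is vacuous and Rule 2 forces $\mu$ to be the identity, matching the right-hand side. For the inductive step, I would apply the product rule to distribute the outermost operator $\mathcal{V}_{\alpha_{0, l_1}} = \mathcal{V}_{\alpha_1} \circ \cdots \circ \mathcal{V}_{\alpha_{l_1-1}}$ across the product $f^1_{\alpha_{l_1}} \cdot H_{p-1}$, where $H_{p-1}$ denotes the inner nested expression associated with $(l_2,\dots,l_p)$. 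Iterating the identity $\mathcal{V}_{\alpha_j}(fg) = (\mathcal{V}_{\alpha_j} f)\,g + f\,(\mathcal{V}_{\alpha_j} g)$ yields a sum indexed by all ways of splitting $\{1,\dots,l_1-1\}$ into a ``left part'' (acting on $f^1_{\alpha_{l_1}}$) and a ``right part'' (acting on $H_{p-1}$), each with its internal relative order preserved; the inductive hypothesis then expands the action on $H_{p-1}$.

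The combinatorial bookkeeping is that each term in the fully expanded product corresponds bijectively to a permutation $\mu$ of $\{1,\dots,r\}$ specified by: (i) the strictly increasing sequence $\tau_i := \mu(l_i)$, reflecting that the anchors $f^1,\dots,f^p$ keep their left-to-right order in every term (this is Rule 1); and (ii) an order-preserving assignment of the remaining operators $\mathcal{V}_{\alpha_j}$ into the blocks $I_i = \{\tau_{i-1}+1,\dots,\tau_i\}$, which is precisely Rule 2, because the Leibniz rule preserves the relative order of any two operators sent to the same factor. Conversely, every $\mu \in \Theta_r(\vec{l}\,)$ arises from a unique distribution pattern. To match the notation, the operators acting on $f^i_{\alpha_{l_i}}$ are exactly $\mathcal{V}_{\alpha_{\mu^{-1}(k)}}$ for $k = \tau_{i-1}+1,\dots,\tau_i-1$ composed in increasing order of $k$, and by the convention $\alpha_{i,j} := (\alpha_{i+1},\dots,\alpha_{j-1})$ this is precisely $\mathcal{V}_{\alpha \circ \mu^{-1}(\tau_{i-1},\, \tau_i)}$.

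The second identity then follows by writing $\Theta_r(\vec{l}\,) = \bigsqcup_{\vec\tau} \Theta_r(\vec{l}\,; \vec\tau)$ over all admissible $\vec\tau = (\tau_1,\dots,\tau_p)$ with $1 \leq \tau_1 < \cdots < \tau_p = r$, and grouping the summands. The main obstacle I anticipate is the bookkeeping at the inductive step --- specifically, verifying that the assignment from summands in the expansion to permutations $\mu$ is indeed a bijection onto $\Theta_r(\vec{l}\,)$, and that the subindices on both sides align in the correct order. Because the operators $\mathcal{V}_j$ do not generally commute, it matters that the composition inside $\mathcal{V}_{\alpha \circ \mu^{-1}(\tau_{i-1},\,\tau_i)}$ is taken in the order dictated by $\mu^{-1}$, not by $\alpha$, and checking this carefully is where the argument is most delicate.
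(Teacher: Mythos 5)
Your argument is essentially the paper's own: the paper omits a formal proof of Lemma \ref{lem2.3} because the discussion preceding Definition \ref{def2.2} already carries out exactly this identification of Leibniz-rule summands with permutations satisfying Rules 1 and 2, and your induction on $p$ is just a careful formalization of that iterated product-rule expansion. Your bookkeeping (anchors keep their order, giving Rule 1; operators landing on a common factor keep their relative order, giving Rule 2; the second identity by partitioning $\Theta_r(\vec{l}\,)$ over $\vec{\tau}$) matches the paper's intended argument, so the proposal is correct.
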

The proof of the lemma is omitted. Please note that in the above summation,  when $ \Theta_r ( \vec{l}  ; \, \vec{\tau}\,)=\emptyset $, we follow the convention that a summation over the empty set is  $0$.

\medskip

We introduce   another type of  permutations, which will   be the inverses of those in $\Theta_{r}(\vec{l})$.  Let $\tau_0=0$ and $\vec{\tau} = (\tau_{1},\dots, \tau_{p})$ be the same as in Definition \ref{def2.2}, and    define the partition on $\{1,\dots, r\}$ given by
\begin{eqnarray}\label{e.3.2}
  I_i =
\{\tau_{i -1} +1 ,\dots, \tau_{i } \} \,, ~~i= 1,\dots, p \,.
\end{eqnarray}
 \begin{Def}\label{def2.4}
 Let $\vec{l}$ and $\vec{\tau}$ be   as in Definition \ref{def2.2}.
  We define
$ \Xi_r ( \vec{\tau}  ) $ as
the collection of   permutations $\rho $ on $
    \{
1,2,\dots,r
\}  $ such that $\rho$ keeps the ordering of $\tau_1$, $\dots$, $\tau_p$\,, i.e. the last elements   of $I_1,\dots, I_p$\,,
  and   the order  of  the elements in each $I_i$.
In other words, $\rho \in  \Xi_r ( \vec{\tau}  ) $ iff $\rho$ satisfies

 \medskip
 \noindent
 {\bf Rule 4.}
    $\rho(\tau_i ) < \rho(\tau_j  ) $ if \,$i <j$;
    
    \smallskip
    \noindent
    {\bf Rule 5.}
    $\rho ( y )  < \rho ( y')$  if $y, y' \in I_i $ and $y < y'$\,.

\medskip
We define
$ \Xi_r (\vec{l} \,; \vec{\tau}  )
$ as
the collection of permutations $ \rho $ in $ \Xi_r ( \vec{\tau}  ) $ such that $  \rho ( \tau_i )= l_i\,, $ $i=1, \dots, p$.
\end{Def}
  Note that for $\rho \in \Xi_r ( \vec{\tau}  )  $     we always have $ \rho(\tau_p) = \rho (r)= r$.

\medskip

The following proposition
 shows that the permutations introduced in Definitions \ref{def2.2} and \ref{def2.4}   are inverses of each other.

\begin{prop}\label{lem 2.1}
Let $\vec{l} $ and $\vec{\tau} $ be   as in Definition \ref{def2.2}.  Suppose that   at least one of the two sets
   $
   \Xi_r (\vec{l} \,;\vec{\tau} )$ and  $\Theta_r (\vec{l} \,;\vec{\tau}  )
$
   is not empty. Then the following   holds,
\begin{eqnarray}\label{e 2.8}
 \Xi_r (\vec{l} \,;\vec{\tau}  ) = \{   \rho: \rho^{-1}
\in  \Theta_r ( \vec{l} \,;\vec{\tau}  )   \}.
\end{eqnarray}
\end{prop}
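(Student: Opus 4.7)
The plan is to prove the set equality by checking the two inclusions via the natural involution $\rho \leftrightarrow \rho^{-1}$. The key observation is that when we restrict $\Theta_r(\vec{l})$ to the subset $\Theta_r(\vec{l};\vec{\tau})$, the partition $(I_i)$ defined in \eref{e.3.3} via $\mu(l_{i-1}), \mu(l_i)$ becomes exactly the partition $(I_i)$ in \eref{e.3.2} defined via $\tau_{i-1}, \tau_i$, since $\mu(l_i) = \tau_i$ on $\Theta_r(\vec{l};\vec{\tau})$. Once this identification is made, Rule 2 and Rule 5 express the same order-preserving property from opposite sides of the permutation, and Rule 1 and Rule 4 are seen to be equivalent to the (already fixed) monotonicity of $l_i$ and $\tau_i$.

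For the inclusion $\Xi_r(\vec{l};\vec{\tau}) \subseteq \{\rho : \rho^{-1} \in \Theta_r(\vec{l};\vec{\tau})\}$, I would take $\rho \in \Xi_r(\vec{l};\vec{\tau})$ and set $\mu = \rho^{-1}$. First, from $\rho(\tau_i) = l_i$ I obtain $\mu(l_i) = \tau_i$, which both pins down the constraint defining $\Theta_r(\vec{l};\vec{\tau})$ and makes the partition \eref{e.3.3} coincide with \eref{e.3.2}. Rule 1 for $\mu$ is then immediate from $\tau_i < \tau_{i+1}$. For Rule 2, suppose $y<y'$ and $\mu(y),\mu(y') \in I_i = \{\tau_{i-1}+1,\dots,\tau_i\}$; letting $a=\mu(y), b=\mu(y')$ so that $y=\rho(a), y'=\rho(b)$ with $a,b \in I_i$, Rule 5 for $\rho$ applied on $I_i$ forces $a<b$ (else $\rho(a)\geq \rho(b)$ would contradict $y<y'$), i.e. $\mu(y)<\mu(y')$.

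For the reverse inclusion, I take $\mu \in \Theta_r(\vec{l};\vec{\tau})$, set $\rho = \mu^{-1}$, and run the analogous argument: $\rho(\tau_i) = l_i$ follows from $\mu(l_i)=\tau_i$; Rule 4 for $\rho$ follows from $l_i<l_{i+1}$; and for Rule 5, given $y<y'$ in $I_i$, I set $a=\rho(y), b=\rho(y')$, note $\mu(a), \mu(b) \in I_i$ (using again that $\mu(l_j)=\tau_j$ makes the two partitions agree), and apply Rule 2 for $\mu$ to rule out $b<a$, obtaining $a<b$, i.e.\ $\rho(y)<\rho(y')$. The nonemptiness hypothesis is used only to ensure that at least one such $\mu$ (or $\rho$) exists so the partition identification is meaningful; each direction above then immediately produces an element on the other side, so the two sets are nonempty or empty together.

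The main step to watch is the bookkeeping that the partition in Rule 2 (defined via the values $\mu(l_i)$) and the partition in Rule 5 (defined via $\tau_i$) actually coincide on the constrained sets $\Theta_r(\vec{l};\vec{\tau})$ and $\Xi_r(\vec{l};\vec{\tau})$. Once this is in place, the proof is a clean contrapositive argument turning the order-preserving Rule 5 on $I_i$ into the order-preserving Rule 2 on $\rho(I_i)=\mu^{-1}(I_i)$, and conversely.
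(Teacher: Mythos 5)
Your proof is correct and follows essentially the same route as the paper's: identify the two partitions in Rules 2 and 5 via $\mu(l_i)=\tau_i$, then transfer the order-preserving conditions across the inverse in both directions. No gaps.
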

\begin{remark}
It follows from Proposition \ref{lem 2.1} that $\Xi_r (\vec{l} \,;\vec{\tau}  ) =\emptyset$ if and only if $\Theta_r (\vec{l} \,;\vec{\tau}  ) =\emptyset$.
\end{remark}

\begin{remark}
Equation \eref{e 2.8}   is equivalent to the following,
\begin{eqnarray*}
 \Theta_r (\vec{l} \,;\vec{\tau}  ) = \{ {\mu}: {\mu}^{-1}  \in  \Xi_r (\vec{l} \,;\vec{\tau}  )   \}.
\end{eqnarray*}
\end{remark}
\noindent \textit{Proof of Proposition \ref{lem 2.1}:}\quad
We first note that  the partitions $(I_i,\, i=1,\dots, p)$ defined in \eref{e.3.2}   and in \eref{e.3.3} are the same.
Take $\rho \in  \Xi_r (\vec{l}; \vec{\tau} ) $ and denote ${\mu}:= \rho^{-1}$. We show that ${\mu} \in \Theta_r(\vec{l}; \vec{\tau})$.
  It is clear that ${\mu}$ satisfies Rule 1. Take $y$, $y'$ such that $y<y'$ and $ {\mu} (y)$, $ {\mu} (y') \in I_i $\,.  We have
$$     \rho ( {\mu} (y) )=  y < y'  =  \rho ( {\mu} (y' ) )     . $$
So  Rule 5 in the definition of $\Xi_r( \vec{\tau})$ implies that  $ {\mu}(y) <  {\mu}(y') $. This shows that ${\mu}$ satisfies Rule 2. We conclude that ${\mu}$ belongs to the right-hand side of \eref{e 2.8}.
We take $\rho$ such that $\rho^{-1}=: {\mu} \in  \Theta_r (\vec{l} \,;  \vec{\tau} )$.
Since
 $$
 \rho(\tau_i) ={\mu}^{-1}(\tau_i) = l_i <l_j ={\mu}^{-1}(\tau_j )= \rho(\tau_j)
 $$
  for $i<j$, $\rho$ satisfies Rule 4.
  Take $y,y' \in I_i$ such that $y < y'$. From Rule 2, it is easy to see that
   $$
   {\mu}^{-1}(y) <{\mu}^{-1} (y' ) ,
   $$
   that is, $\rho(y) < \rho (y')$.
  So $\rho$ satisfies Rule 5. We conclude that $\rho \in \Xi_r(\vec{l}; \vec{\tau})$.
\hfill $\square$

\medskip

 \subsection{ Multiple integrals}\label{subsection 4.2}
 Let $g = (g^{1}, \dots, g^{m})$ be a H\"older continuous function  on $[0,T ]$ of order $\beta>1/2$ with values in $\RR^m$. Take $\al = (\al_{1},\dots, \al_{r}) \in \Gamma_r$. Recall that we denote by
 $\Gamma_r$ the collection of multi-indices of length $r$  with elements in $\{1,\dots, m\}$. We also denote
  $\Gamma=\cup_{r=1}^\infty
 \Gamma_r$ the collection of all multi-indices  with elements in $\{1,\dots, m\}$.  Recall that $|\gamma|$  is the length of the multi-index $\gamma$.
 Given a permutation $\rho$ on $\{1,\dots, r\}$, denote $\al \circ \rho = (\al_{\rho(1)},\dots, \al_{\rho(r)}) \in \Gamma_r$.

 In this subsection,  we  study        multiple  integrals, defined as iterated Riemann-Stieltjes integrals, of the form
\begin{eqnarray} \label{e2.6}
  g_{ \tau,  s  }^{   \alpha} &:=& \int_{ \tau }^{s } \int_{  \tau  }^{s_r}       \cdots  \int_{  \tau  }^{s_{2}} dg^{ \al_1  }_{s_{1 }} \cdots dg^{  \al_{ r-1}  }_{s_{r-1} } dg^{ \al_r }_{s_r },
\end{eqnarray}
where $0\le  \tau \le   s  \le T $.
We define the differential of $g_{   \tau, s  }^{ \alpha} $ by
\begin{equation}  \label{eq2}
d g_{   \tau, s  }^{  \alpha}  =  \left( \int_{  \tau  }^{s }       \cdots  \int_{  \tau  }^{s_{2}} dg^{ \al_{1} }_{s_{1 }} \cdots dg^{  \al_{ r-1} }_{s_{r-1} } \right)
dg^{ \al_r  }_{s  }
\, .
 \end{equation}

The following lemma gives a formula for the product of two such  multiple integrals.
\begin{lemma}\label{lem 0}
Let $\gamma' $, $\gamma'' $ be multi-indices in $\Gamma$ and denote $r'=|\gamma'|$, $r''=|\gamma''|$ and $r=|\gamma' | + |\gamma''  |$. Denote $\gamma= (\gamma', \gamma'' ) \in \Gamma$.  Then
\begin{eqnarray*}
g^{\gamma' }_{\tau, s} g^{\gamma'' }_{\tau, s} = \sum_{\rho \in Sh(\gamma', \gamma'' ) } g^{\gamma \circ \rho^{-1} }_{\tau, s},
\end{eqnarray*}
 where $Sh(\gamma', \gamma'')$ is the collection of permutations $\rho$ on $\{1,\dots, r\}$ such that $\rho$ does not change the orderings of $(1,\dots, r' )$ and the orderings of $( r' +1,\dots, r)$, that is, if $y,y' \in \{1,\dots , r'  \}$  or  $y,y' \in \{ r' +1, \dots, r \}$, and $y<y'   $, then we have
$  \rho(y)<\rho(y')$.
\end{lemma}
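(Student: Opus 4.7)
The plan is to prove the shuffle formula by induction on the total length $r = r' + r''$. The base case where $r' = 0$ or $r'' = 0$ is immediate: one multi-index is empty, $g^{\emptyset}_{\tau,s}$ is the constant $1$, and $Sh(\gamma', \gamma'')$ contains only the identity, so both sides reduce to $g^\gamma_{\tau,s}$.

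For the inductive step, I would differentiate the product $s \mapsto g^{\gamma'}_{\tau,s} g^{\gamma''}_{\tau,s}$ using the Leibniz rule, which is available here because $g$ is H\"older of order $\beta > 1/2$ and all iterated integrals in question are Young integrals. Writing $\tilde\gamma' := (\gamma'_1,\ldots,\gamma'_{r'-1})$ and $\tilde\gamma'' := (\gamma''_1,\ldots,\gamma''_{r''-1})$, and using the differential formula \eref{eq2}, one gets
\begin{equation*}
d\bigl(g^{\gamma'}_{\tau,s} g^{\gamma''}_{\tau,s}\bigr) \;=\; g^{\tilde\gamma'}_{\tau,s}\,g^{\gamma''}_{\tau,s}\,dg^{\gamma'_{r'}}_s \;+\; g^{\gamma'}_{\tau,s}\,g^{\tilde\gamma''}_{\tau,s}\,dg^{\gamma''_{r''}}_s.
\end{equation*}
Each bracketed product on the right has total length $r-1$, so the inductive hypothesis expresses it as a shuffle sum indexed by $Sh(\tilde\gamma', \gamma'')$ or $Sh(\gamma', \tilde\gamma'')$. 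Integrating from $\tau$ to $s$ converts each term into a multiple integral of length $r$ in which the last integration variable carries the index $\gamma'_{r'}$ or $\gamma''_{r''}$ respectively.

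The combinatorial identification that closes the induction rests on the following observation: for any $\rho \in Sh(\gamma', \gamma'')$, the ordering-preserving constraint forces $\rho(r')$ to be the largest of $\rho(1),\ldots,\rho(r')$ and $\rho(r)$ to be the largest of $\rho(r'+1),\ldots,\rho(r)$; in particular the global maximum $r$ is attained either at $i = r'$ or at $i = r$. This partitions $Sh(\gamma', \gamma'')$ into two subsets, which biject with $Sh(\tilde\gamma', \gamma'')$ and $Sh(\gamma', \tilde\gamma'')$ respectively by deleting the position that maps to $r$ and relabeling. Under this bijection, the reshuffled multi-index $\gamma \circ \rho^{-1}$ is precisely the reshuffled shorter multi-index with $\gamma'_{r'}$ or $\gamma''_{r''}$ appended at the end. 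Matching these two pieces with the two summands produced by the integrated Leibniz expansion gives the identity.

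The analytic content is minor (Young's product rule), so the main obstacle is the bookkeeping: I have to be careful that when the last position of $\gamma'$ is removed, the indices $\{r'+1,\ldots,r\}$ occupied by $\gamma''$ are properly relabeled to $\{r',\ldots,r-1\}$ so that the induced permutation on $\{1,\ldots,r-1\}$ really lies in $Sh(\tilde\gamma',\gamma'')$, and symmetrically for the other case. Tracking how $\gamma \circ \rho^{-1}$ behaves under this relabeling, and checking that no shuffle is counted twice and none is missed, is where all the care has to go.
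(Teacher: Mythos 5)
Your argument is correct, but it is genuinely different from what the paper does: the paper gives no proof at all for this lemma, disposing of it in one sentence by appealing to the general theory of shuffle products of words (citing Reutenauer) together with Fubini's theorem. Your route is the classical Ree--Chen induction: differentiate the product with the Young/Leibniz rule (legitimate here since each iterated integral $g^{\gamma'}_{\tau,\cdot}$ is H\"older of order $\beta>1/2$, so the product rule for Young integrals applies), and close the induction with the combinatorial splitting of $Sh(\gamma',\gamma'')$ according to whether $\rho^{-1}(r)=r'$ or $\rho^{-1}(r)=r$. That dichotomy is exactly right, since order-preservation on each block forces the position mapping to the global maximum $r$ to be the last position of one of the two blocks, and the two cases are disjoint whenever both blocks are nonempty (the degenerate case being your base case). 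The identification of the last letter of $\gamma\circ\rho^{-1}$ with $\gamma'_{r'}$ or $\gamma''_{r''}$ is also consistent with the paper's convention \eref{eq2}, in which the outermost integration variable carries the \emph{last} letter of the multi-index. What your approach buys is a self-contained, elementary verification that does not presuppose familiarity with the shuffle algebra; what the paper's citation buys is brevity and the fact that the purely algebraic identity is decoupled from the analytic question of which integration theory is in force. The relabeling bookkeeping you flag at the end is indeed the only place requiring care, but it is routine and does not hide a gap.
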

This result can be shown by the properties of shuffle product of   words (see, for example \cite{Reu}) and   Fubini's theorem.

 The following  is an immediate  corollary  of the above result.
 \begin{lemma}\label{lem 1}
Let $\gamma' $, $\gamma'' $, $r' $, $r$ and $\gamma $ be   as in Lemma \ref{lem 0}.  Then
\begin{eqnarray}\label{eqn 4}
 \int^{s}_{\tau} \int^{s''}_{\tau} d g^{  \gamma'  }_{ {\tau,  } s' }  dg^{ \gamma'' }_{ {\tau,  }s'' }
 =\int^{s}_{\tau}   g^{  \gamma'  }_{ {\tau,  } s'' }  dg^{ \gamma'' }_{ {\tau,  }s'' }
   &=&
  \sum_{ \rho \in  \Xi_{ r   } (  r', r ) } g^{ \gamma \circ {\rho^{-1}}   }_{  {\tau,  }s}\,.
\end{eqnarray}
 \end{lemma}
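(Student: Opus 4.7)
The first equality in \eqref{eqn 4} is immediate from the definition of the iterated Riemann-Stieltjes integral: the inner integration in $s'$ gives $\int^{s''}_\tau dg^{\gamma'}_{\tau, s'} = g^{\gamma'}_{\tau, s''}$. So the only thing to prove is
\begin{equation*}
\int^s_\tau g^{\gamma'}_{\tau, s''}\, dg^{\gamma''}_{\tau, s''} = \sum_{\rho \in \Xi_r(r', r)} g^{\gamma \circ \rho^{-1}}_{\tau, s}.
\end{equation*}
My plan is to unfold the right-hand side using the differential identity \eqref{eq2} and then reassemble the inner sum via Lemma \ref{lem 0} applied to shorter multi-indices.

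Set $\gamma''' := (\gamma''_1, \ldots, \gamma''_{r''-1})$. I would first observe that every $\rho \in \Xi_r(r', r)$ satisfies $\rho(r) = r$ (as pointed out after Definition \ref{def2.4}), so restricting $\rho$ to $\{1, \ldots, r-1\}$ produces a permutation $\sigma$ that preserves the orderings of $\{1, \ldots, r'\}$ and $\{r'+1, \ldots, r-1\}$, i.e., $\sigma \in Sh(\gamma', \gamma''')$. Conversely, any such $\sigma$ extends to a unique $\rho$ on $\{1, \ldots, r\}$ by $\rho(r) = r$, and the resulting $\rho$ automatically satisfies Rule 4 (since $\rho(r') \le r-1 < r = \rho(r)$) as well as Rule 5, so $\rho \in \Xi_r(r', r)$. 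Thus the map $\rho \mapsto \sigma$ is a bijection between $\Xi_r(r', r)$ and $Sh(\gamma', \gamma''')$. Under this correspondence the last letter of $\gamma \circ \rho^{-1}$ is $\gamma_r = \gamma''_{r''}$, and its first $r-1$ letters form $(\gamma', \gamma''') \circ \sigma^{-1}$.

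Using \eqref{eq2}, this gives
\begin{equation*}
g^{\gamma \circ \rho^{-1}}_{\tau, s} = \int^s_\tau g^{(\gamma', \gamma''') \circ \sigma^{-1}}_{\tau, s''}\, dg^{\gamma''_{r''}}_{s''}.
\end{equation*}
Summing over $\sigma \in Sh(\gamma', \gamma''')$ and applying Lemma \ref{lem 0} to the product of the shorter multi-indices $\gamma'$ and $\gamma'''$,
\begin{equation*}
\sum_{\rho \in \Xi_r(r', r)} g^{\gamma \circ \rho^{-1}}_{\tau, s} = \int^s_\tau g^{\gamma'}_{\tau, s''}\, g^{\gamma'''}_{\tau, s''}\, dg^{\gamma''_{r''}}_{s''} = \int^s_\tau g^{\gamma'}_{\tau, s''}\, dg^{\gamma''}_{\tau, s''},
\end{equation*}
where the last equality applies \eqref{eq2} in the reverse direction to the factor $g^{\gamma'''}_{\tau, s''}\, dg^{\gamma''_{r''}}_{s''} = dg^{\gamma''}_{\tau, s''}$.

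The main hurdle is the combinatorial identification of $\Xi_r(r', r)$ with the subset of $Sh(\gamma', \gamma'')$ fixing $r$; this is pure bookkeeping, handled by comparing Rules 4-5 with the defining shuffle condition as above. Once that bijection is in hand, the proof is just one application of Lemma \ref{lem 0} combined with the differential formula \eqref{eq2}, with no additional Young-integral machinery required.
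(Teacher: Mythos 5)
Your proof is correct and follows essentially the same route as the paper's: both reduce the claim to Lemma \ref{lem 0} applied to $\gamma'$ and $\gamma''$ with its last letter removed, via the differential identity \eref{eq2}, and both hinge on identifying $\Xi_r(r',r)$ with the shuffles of $\{1,\dots,r-1\}$ extended by fixing $r$ (the paper's set $\wt{\Xi}_r(r',r)$). The only cosmetic difference is that you run the computation from the right-hand side to the left, and you verify the combinatorial bijection in more detail than the paper, which simply asserts $\wt{\Xi}_{r}(r',r) = {\Xi}_{r}(r',r)$.
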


\begin{proof}
Using  (\ref{eq2}), we can rewrite the left-hand side of \eref{eqn 4} as
$$
\int_{\tau}^s g^{\gamma' }_{{\tau,  }s'' } g^{\gamma''-}_{{\tau,  }s''} dg^{\gamma ({r })  }_{s'' },
 $$
 where   we denote by  $\gamma -$  the multi-index obtained by removing the last element of $\gamma$, that is, $ \gamma  - = (\gamma_1, \dots, \gamma_{r-1} )$, and recall that   $\gamma(i) =\gamma_i$ denotes  the $i$th element of $\gamma$.
 Applying Lemma \ref{lem 0} to $ g^{\gamma'}_{{\tau,  }s'' } g^{\gamma'' -}_{{\tau,  }s'' }   $ yields
 \begin{eqnarray}\label{e2.10ii}
 \int^{s}_{\tau} \int^{s''}_{\tau} d g^{  \gamma'  }_{ \tau,   s' }  dg^{ \gamma'' }_{ {\tau,  }s'' }
   &=& \int_{\tau}^s
    \sum_{\rho \in Sh(\gamma', \gamma''- ) } g^{(\gamma-) \circ \rho^{-1} }_{\tau,  s''}
   dg^{\gamma({r})  }_{s'' }.
\end{eqnarray}
Denote by   $\wt{\Xi}_{r}(r',r)$ the collection of permutations $\rho$ on $\{1,\dots, r\}$ such that there exists $\rho' \in Sh(\gamma',\gamma''-)$ such that
\begin{eqnarray*}
\rho(j)= \begin{cases}
\rho'(j), & j=1,\dots, r-1,
\\
r, & j=r.
\end{cases}
\end{eqnarray*}
Then \eref{e2.10ii} becomes
\begin{eqnarray*}
 \sum_{ \rho \in \wt{ \Xi}_{ r   } (  r', r ) } g^{ \gamma \circ {\rho^{-1}}   }_{ \tau,   s}.
\end{eqnarray*}
 Equation  \eref{eqn 4} then follows by noticing  that $\wt{\Xi}_{r}(r',r) = {\Xi}_{r}(r',r) $.
\hfill
 $\square$

   \medskip

 The following  result is a generalization of Lemma \ref{lem 1}.
 \begin{prop}\label{prop 4}
 Let $\gamma^1$, $\dots$, $\gamma^p $ be multi-indices in $\Gamma$, and denote  $r= |\gamma^1| +\dots+|\gamma^p|$ and $\tau_i = |\gamma^1| +\dots+|\gamma^i| $, $i=1,\dots, p$. Denote $\gamma = (\gamma^1, \dots, \gamma^p) \in \Gamma$.  Then
\begin{eqnarray}\label{e2.10i}
\int^{s }_{ \tau }   \int^{s_p }_{\tau}  \cdots   \int^{s_{2 } }_{\tau}d g^{\gamma^1}_{\tau,  s_1}  \cdots dg^{\gamma^{p-1}}_{\tau,  s_{p-1}}  dg^{\gamma^p }_{\tau,  s_p}
          &=&
           \sum_{ \rho \in \Xi_r ( \tau_1,\dots, \tau_p )
 } g^{\gamma \circ \rho^{-1} }_{\tau,  s }\,.
\end{eqnarray}
 \end{prop}

\begin{proof}    We prove the proposition by induction on $p$. The proposition is clearly true when    $p=1$, and  by    Lemma \ref{lem 1} it is   true when $p=2$. Take $p \geq 3$.  Assuming that \eref{e2.10i} holds for $p-1$, we can write
\begin{eqnarray}\label{e2.10}
 \int^{s }_{ \tau }   \int^{s_p }_{\tau}  \cdots   \int^{s_{2 } }_{\tau}d g^{\gamma^1}_{\tau,  s_1}  \cdots dg^{\gamma^{p-1}}_{\tau,  s_{p-1}}  dg^{\gamma^p }_{\tau,  s_p}
          &=&
         \sum_{  \wt{\rho}  \in \Xi_{\wt{r} } ( \tau_1,\dots, \tau_{p-1} )
 } \int^{s }_{ \tau }
 g^{\wt{\gamma} \circ \wt{\rho}^{-1} }_{\tau,  s_p }
   dg^{\gamma^p}_{\tau,  s_p},
\end{eqnarray}
where $\wt{\gamma} = (\gamma^1, \dots, \gamma^{p-1 })$ and $\wt{r} = \tau_{p-1}$. Applying
 Lemma \ref{lem 1}  to the right-hand side of \eref{e2.10} we have
\begin{eqnarray}\label{e.3.10}
 \int^{s }_{ \tau }   \int^{s_p }_{\tau}  \cdots   \int^{s_{2 } }_{\tau}d g^{\gamma^1}_{\tau,  s_1}  \cdots dg^{\gamma^{p-1}}_{\tau,  s_{p-1}}  dg^{\gamma^p }_{\tau,  s_p}
          &=&
    \sum_{  \wt{\rho} \in \Xi_{\wt{r} } ( \tau_1 , \dots, \tau_{p-1} )
 }
 \sum_{\widehat{\rho} \in \Xi_r(  \tau_{p-1}, \tau_p ) } g_{\tau,  s}^{( \wt{\gamma} \circ \wt{\rho}^{-1}, \gamma^p ) \circ \widehat{\rho}^{-1} }
      .
\end{eqnarray}
For each
 $ \wt{\rho} \in \Xi_{\wt{r} } ( \tau_1 , \dots, \tau_{p-1} )$
 and $\widehat{\rho} \in \Xi_r(  \tau_{p-1}, \tau_p )   $,
we define a permutation $\rho= \rho_{\wt{\rho}, \widehat{\rho}}$ on $\{1,\dots, r\}$ such that
\begin{eqnarray*}
\rho (j) &=&
\begin{cases}
\widehat{\rho}(j), ~~~~~~j=\tau_{p-1}+1,\dots, \tau_p,
 \\
\widehat{\rho} ( \wt{\rho}(j)  ),
~~j =1,\dots, \tau_{p-1} .
\end{cases}
\end{eqnarray*}
 Then \eref{e.3.10} is equal to
\begin{eqnarray}\label{eq 4.12i}
  \sum_{  \wt{\rho} \in \Xi_{\wt{r} } ( \tau_1 , \dots, \tau_{p-1} )
 }
 \sum_{\widehat{\rho} \in \Xi_r(  \tau_{p-1}, \tau_p ) } g_{\tau,  s}^{\gamma \circ \rho_{\wt{\rho}, \widehat{\rho}}^{-1} }.
 \end{eqnarray}
 We show that
 \begin{eqnarray}\label{eq 4.12}
  \Xi_r(\vec{\tau}) &=& \left\{\rho_{\wt{\rho}, \widehat{\rho} } : \wt{\rho} \in \Xi_{\wt{r} } ( \tau_1 , \dots, \tau_{p-1} ), \widehat{\rho} \in \Xi_r (  \tau_{p-1}, \tau_p )
 \right\} .
 \end{eqnarray}
  It is easy to see that $ \Xi_r(\vec{\tau}) $ is included on the right-hand side of \eref{eq 4.12}, that is, for each $\rho \in \Xi_r(\vec{\tau})$, we can find $   \wt{\rho} \in \Xi_{\wt{r} } ( \tau_1 , \dots, \tau_{p-1} )$ and $ \widehat{\rho} \in \Xi_r (  \tau_{p-1}, \tau_p )
 $ such that $\rho = \rho_{\wt{\rho},  \widehat{\rho}   }$.

In the following, we show the other inclusion.  We take $\rho = \rho_{ \wt{\rho},  \widehat{\rho}   }$ from the right-hand side of \eref{eq 4.12}. Rule 4 for $\wt{\rho} $ implies that $\wt{\rho} (\tau_i) < \wt{\rho} (\tau_j)  $ for $i,j=1,\dots, p-1$ and $i<j$. This fact and  Rule 5 for $\widehat{\rho}$   imply that
 $$
  \rho(\tau_i)= (\widehat{\rho} \circ\wt{\rho}) (\tau_i) < (\widehat{\rho} \circ \wt{\rho}) (\tau_j) = \rho(\tau_j)
  $$
   for $i,j=1,\dots, p-1$ and $i<j$.  On the other hand, Rule 4 for $\widehat{\rho}$ implies that
   $$ \rho(\tau_{p-1} ) =\widehat{\rho}(\wt{\rho} (\tau_{p-1})) =\widehat{\rho}(\tau_{p-1}) < \widehat{\rho}(\tau_p)={\rho}(\tau_p). $$
   Therefore, $\rho$ satisfies Rule 4 in the definition of  $\Xi_r(\vec{\tau} )$.
 Take now $y<y'$ and $y,y' \in I_i$, $i=1,\dots, p-1$. By Rule 5 for $\wt{\rho}$,  we have $\wt{\rho} (y) < \wt{\rho} (y')$, and thus $\rho (y) = \widehat{\rho}(\wt{\rho} (y)   ) < \widehat{\rho}(\wt{\rho} (y')   ) = \rho (y') $. On the other hand, if $y<y'$ and $y,y' \in I_p$, then by Rule 5 in the definition of $\widehat{\rho}$, we have $\rho(y) =\widehat{\rho}(y)< \widehat{\rho}(y') = \rho(y')$. We conclude that $\rho $ satisfies Rule 5 in the definition of   $\Xi_r(\vec{\tau} )$.
 In summary, we have shown that $\rho \in \Xi_r(\vec{\tau})$. This proves   identity \eref{eq 4.12}.

  It is easy to show that there is no duplicated element in the set on the right-hand side of \eref{eq 4.12}, that is, whenever $\wt{\rho} \neq \wt{\rho}'$ or $\widehat{\rho} \neq \widehat{\rho}'$,
 we have $\rho_{ \wt{\rho}, \widehat{\rho} } \neq\rho_{ \wt{\rho}', \widehat{\rho}' }$. This fact, together with   identity \eref{eq 4.12}, imply that \eref{eq 4.12i} is equal to
 $$ \sum_{\rho \in \Xi_r(\tau_1,\dots, \tau_p)} g_{\tau,  s}^{\gamma \circ \rho^{-1}}.  $$
 This completes the proof. \end{proof}

 \subsection{A generalized Leibniz rule}\label{subsection 4.3}
 Using the  permutation set     $\Xi_r(\vec{\tau})$, we can state the following Leibniz rule. Recall that given a multi-index $\al=(\al_{1},\dots, \al_{r})$ we denote $\al_{i,j} = (\al(i+1),\dots, \al(j-1))$.  We also use
    ${\mathcal{V}}_{j_1,\dots, j_k} := {\mathcal{V}}_{j_1}\cdots {\mathcal{V}}_{j_k}$,  where
    ${\mathcal{V}}_j$ is   defined in \eref{eq 1.3i}.
    \begin{prop}\label{lem 2.5}
 Take  $\alpha = (\al_{1},\dots, \al_{r}) \in \Gamma_r$, $\vec{l}=(l_1,\dots, l_p)$
  such that $ 1\leq l_1<\cdots<l_p = r$,   $  p\leq r$.
 Assume that $  f^i_j  \in C^{r-p} (\RR)$, $i=1,\dots, p$, $j=1,\dots, m$.    Then the following Leibniz rule holds:
   \begin{eqnarray*}
 &&  {\mathcal{V}}_{\alpha_{ 0, l_1  } }
       \left( f^{1 }_{\alpha_{l_1 }  } \cdots
              {\mathcal{V}}_{\alpha_{ l_{p-2}, l_{p-1} }  } \left(  f^{ p-1 }_{\alpha_{l_{p-1} }  }   {\mathcal{V}}_{\alpha_{ l_{p-1}, l_p }  }  f^{ p}_{\alpha_{ l_p }  }  \right)
  \right)
  \nonumber
 \\
  &&=
  \sum_{
      1\leq \tau_1 < \cdots < \tau_p   = r
    }
      \sum_{
    \rho \in \Xi_r ( \vec{l}  ; \, \vec{\tau}\,)
     }
      \left(    {\mathcal{V}}_{\al \circ \rho ( 0,  \tau_1 ) } f^{1}_{\al \circ \rho (   \tau_1 )  } \right)
    \dots
  \left(    {\mathcal{V}}_{\al \circ \rho ( \tau_{p -1} , \tau_p ) } f^{p}_{\al \circ \rho (    \tau_p  )  } \right)
                     \,.
      \end{eqnarray*}
 \end{prop}
 \begin{proof}
  The above  formula      follows from Lemma \ref{lem2.3} and Proposition \ref{lem 2.1}.  \end{proof}

\section{The error function}\label{section 5}
The objective of this section is to derive   an explicit expression for   the remainder
in the incomplete  Taylor scheme \eref{eq1.7}.
Let $y$ be the solution of the   differential equation
\begin{eqnarray}\label{eqn5.10}
dy_{t} &=&   V(y_{t} ) dx_{t}  , \quad y_{0} \in \RR^{d},
\end{eqnarray}
on $ [ 0, T]$, where $x: [0,T] \rightarrow  \RR^{m}$ is H\"older continuous of order $\beta>1/2$ and  $V=(V^{i}_{j})_{1\leq i\leq d, 1\leq j\leq m} $ is a continuous mapping from $ \RR^{d}$ to $  \RR^{d\times m}$.
We consider  the Taylor scheme
  \begin{eqnarray}\label{e3.2i}
 y^{n}_{t}  &=&
y^n_{t_k}+ \mathcal{E}_{ t_k,   t }^{(N)}  ( y^n_{t_k   }   ),  \quad y^{n}_{0} = y_{0},
	 \end{eqnarray}
for $  t \in [t_k, t_{k+1}]$, $  k=0,1,\dots, n-1$, where
\begin{eqnarray}\label{eqn5.1}
\mathcal{E}_{s,t}^{(N)}(y  )
&:=&
 \sum_{r=1}^N \sum_{ \gamma \in \Gamma_r  }
    {\mathcal{V}}_{\gamma   } I(y)     x^{ \gamma }_{s,t}
  \,, \quad y \in \RR^{d}
  \end{eqnarray}
  is
    the  order-$N$  Taylor expansion and  ${\mathcal{V}}_\gamma := {\mathcal{V}}_{\gamma(1)} \cdots {\mathcal{V}}_{\gamma(r)}$ for $\gamma=(\gamma(1),\dots, \gamma(r)) \in \Gamma_r $ (the collection of multi-indices of length $r$  with elements in $\{1,\dots, m\}$). Recall that      $I$ is the identity function on $\RR^{d}$,
     the vector field ${\mathcal{V}}_{j}$ is defined in \eref{eq 1.3i} and   for $\gamma \in \Gamma_r$    we denote (see \eref{e2.6})
  \begin{eqnarray*}
x^{  \gamma}_{s,t} :  =\int_{s}^{t}\cdots \int_{s}^{t_{2}} d x^{\gamma(1)}_{t_{1}} \cdots d x^{\gamma(r)}_{t_{r}}.
\end{eqnarray*}

 By \eref{eqn5.10} we can write
  \begin{eqnarray}\label{e3.3}
  y_t-y^n_t &=& \int_0^t \left[ V(y_{s})  -  V(y^n_{s})\right] dx_{s} +R_{t},
\end{eqnarray}
where $R_{t}:=\int_0^t V(y^n_{s} ) dx_{s} - y^n_t$ is the remainder term.
 If we can find a function $ \dot{V}_{j}(\xi, \xi') $   on $\RR^{d}\times\RR^{d}$ (see \eref{eqn5.14}) such that
\begin{eqnarray*}
V_{j}(y_{t})-V_{j}(y^{n}_{t}) = \dot{V}_{j}(y_{t},y^{n}_{t}) (y_{t}-y^{n}_{t}),
\end{eqnarray*}
   $j=1,\dots, m$, then  equation \eref{e3.3} can be considered as a linear differential equation for the error function $y-y^{n}$.
     Our aim in this section is to derive an explicit expression for   the remainder $R_{t}=\int_0^t V(y^n_{s} ) dx_{s} - y^n_t$.

  \subsection{A linear   differential equation for the error function}\label{section3.1}
 We   first   consider  the differential operator ${\mathcal{V}}_{\gamma}$ appearing in \eref{eqn5.1}.
 We denote by  $  {\Upsilon} _p$   the collection of multi-indices of length $p$  with elements in $ \{   1, \dots,  d\}$
 and $\displaystyle  {\Upsilon}=\cup_{p=1}^\infty  {\Upsilon}_p$.
  For $\zeta \in  {\Upsilon}_p $, we  denote
\begin{eqnarray*}
 \partial_{\zeta  } &: =& \partial_{\zeta_1} \partial_{\zeta_2} \cdots \partial_{\zeta_{p} }  \,,
\end{eqnarray*}
 where $\zeta_j$ is the $j$th element of $\zeta$ and recall that $\partial_{i}=\frac{\partial}{\partial y^{i}}$.
For    $\al\in \Gamma_r $ such that   $p\leq r$,       $\vec{\tau} = (\tau_1, \dots, \tau_p)$ such that $ 1\leq \tau_1 <\cdots<\tau_p=r$,  and $y \in \RR^{d}$
 we introduce the function
\begin{eqnarray}\label{eqn 5.1}
H(  \al, \zeta,   \vec{\tau}   ) (y)
 &=&
 \left(    {\mathcal{V}}_{\al_{  0,  \tau_1 }  } V^{\zeta_1}_{\al_{ \tau_1 }  } (y) \right)
    \dots
  \left(     {\mathcal{V}}_{\al_{  \tau_{p -1} , \tau_p }  } V^{\zeta_p}_{\al_{     \tau_p }  } (y) \right)
\nonumber
  \\
  &=&
   \left(    {\mathcal{V}}_{\al_{  0,  \tau_1+1 }  } I_{\zeta_1}  (y) \right)
    \dots
  \left(     {\mathcal{V}}_{\al_{  \tau_{p -1} , \tau_p+1 }  } I_{\zeta_p}  (y) \right),
    \end{eqnarray}
   where $I_i (y) = y_i$, $i=1,\dots, d$ is the projection function, and recall that given a multi-index $\al=(\al_{1},\dots, \al_{r})$, we denote $\al_{i,j} = (\al_{i+1},\dots, \al_{j-1})$. Note that the second equation in \eref{eqn 5.1} follows from the identity $\mathcal{V}_j I_{i}(y) = V^i_j (y)$.

 \begin{lemma}\label{lem 5.1}
Let  $f \in C^{r}(\RR^{d} )$ and $\al \in \Gamma_r $.
Then the  following  identity holds true:
 \begin{eqnarray}\label{e3.4}
{\mathcal{V}}_\al   f
&=&
 \sum_{p = 1}^{ r }
 \sum_{ \zeta \in {\Upsilon}_p  }
   \sum_{\substack{\vec{\tau}=(\tau_1 , \dots , \tau_p  ):\\
    1\leq \tau_1 < \cdots < \tau_p   = r
  } }
    \sum_{  \rho \in \Xi_r (   \vec{\tau}\,) }
         H(   \al \circ \rho, \zeta,  \vec{\tau}    )
   \partial_{\zeta } f
  \, .
\end{eqnarray}
 \end{lemma}
\begin{proof}
It is easy to show by induction and by the  definition of the vector field ${\mathcal{V}}_j$ that
\begin{eqnarray*}
        {\mathcal{V}}_{\alpha   }f
    &=&
   \sum_{p = 1}^{ r }
 \sum_{ \zeta \in {\Upsilon}_p  }
   \sum_{1\leq l_1 < \cdots < l_p = r }
      {\mathcal{V}}_{\alpha_{ 0, l_1 }  }\left( V^{\zeta_1  }_{\alpha_{ l_1 }  } \cdots {\mathcal{V}}_{\alpha_{ l_{p-2}, l_{p-1} }  } \left(  V^{\zeta_{p-1} }_{\alpha_{ l_{p-1} }  }    {\mathcal{V}}_{\alpha_{ l_{p-1}, l_p }  }  V^{ \zeta_{p}  }_{\alpha_{ l_p }  }  \right)
  \right)   \partial_{\zeta } f
   .
 \end{eqnarray*}
Applying  Proposition \ref{lem 2.5} to the above expression yields
 \begin{eqnarray*}
{\mathcal{V}}_\al   f
&=&
 \sum_{p = 1}^{ r }
 \sum_{ \zeta \in {\Upsilon}_p  }
   \sum_{\substack{
   1 \leq l_1  < \cdots < l_p = r\\
   1\leq \tau_1 < \cdots < \tau_p   = r
  } }
    \sum_{  \rho \in \Xi_r ( \vec{l}  ; \, \vec{\tau}\,) }
         H(   \al \circ \rho, \zeta,  \vec{\tau}    )
   \partial_{\zeta } f
  \, .
\end{eqnarray*}
Equation    \eref{e3.4} is then obtained by noticing  that the following two sets are identical:
\begin{eqnarray*}
\Xi_{r}(\vec{\tau}) &=& \bigcup_{ \substack{ \vec{l}=(l_{1},\dots, l_{p}) :\\
1\leq l_{1}<\cdots < l_{p}=r.
 } } \Xi_{r}(\vec{l}, \vec{\tau}) .
\end{eqnarray*}
This completes the proof.
\end{proof}

\medskip

Take $\zeta \in  {\Upsilon}_p$. We denote by  ${\mathcal{E} }^{ \zeta }_{s, t} $  the   multiple   integral  
\begin{eqnarray}\label{eqn 5.2}
  {\mathcal{E} }^{ \zeta }_{s, t} & = &    \int_{ s }^{t } \int_{  s  }^{t_r}       \cdots  \int_{  s  }^{t_{2}} d\mathcal{E}^{ \zeta_1 }_{s,t_{1 }} \cdots d\mathcal{E}^{  \zeta_{ p-1 }  }_{s,t_{r-1} } d\mathcal{E}^{ \zeta_{  p }  }_{s,t_r },
\end{eqnarray}
where
\begin{eqnarray}\label{e3.8ii}
\mathcal{E}_{s,t}^{i}(y  )
&=&
 \sum_{r=1}^N \sum_{ \gamma \in \Gamma_r }
    {\mathcal{V}}_{\gamma   } I_{i}(y)     x^{ \gamma }_{s,t}
  \,, \quad y \in \RR^{d}.
\end{eqnarray}

     According to Proposition \ref{prop 4}, the multiple integral ${\mathcal{E} }^{ \zeta}_{s, t}$ can be expressed as a linear combination of  elements in  $\{ x^{\al}_{s,t}\,, \al \in \Gamma \} $. Recall that  $ \Gamma = \cup_{r=1} ^\infty \Gamma_r$ is the collection of multi-indices with elements in $\{1,\dots, m\}$.
 The following lemma provides an explicit  formula   for this  linear combination. Recall that for a permutation $\rho$ on $\{1,\dots, r\}$, we denote $\al \circ \rho = (\al_{\rho(1)}, \dots, \al_{\rho(r)}) $.
  \begin{lemma}\label{lem 5.2}
 Take $p \in \NN$  and  $\zeta \in  {\Upsilon}_p $\,. Assume that $V\in C^{N-1}$.
 Then for each $s,t \in [0, T]$ we have
 \begin{eqnarray}\label{e3.7}
     \mathcal{E}^{  \zeta   }_{ s,  t }
   &=&
        \sum_{r=p }^{N   }
        \sum_{\substack{
\vec{\tau}=(\tau_1,\dots,\tau_p):
\\
  1\leq \tau_1<\cdots<\tau_p=r}
 }
   \sum_{\al \in \Gamma_r}
    \sum_{ \rho \in \Xi_r ( \vec{\tau} \, )
 }
     H(   \al \circ \rho, \zeta,   \vec{\tau}   )
       x^{\al   }_{s, t }
          +Q(N,p,\zeta)_{ s,t}
   ,
  \end{eqnarray}
 where
 \begin{eqnarray*}
 Q (N,p,\zeta)_{ s,t}& =&
    \sum_{\substack{
    \gamma^1 ,\dots, \gamma^p \in \Gamma:
    \\
      |\gamma^1|+\cdots+|\gamma^p| > N \\
      |\gamma^1|, \dots, |\gamma^p| \leq N
      }
      }
       \sum_{ \rho \in \Xi_{|\gamma|} ( \vec{\tau}(\gamma) \, )
 }
     H(   \gamma, \zeta,   \vec{\tau} (\gamma)    )
       x^{\gamma \circ \rho^{-1} }_{s, t },
\end{eqnarray*}
 $\gamma= (\gamma^1, \dots, \gamma^p)$, $\vec{\tau} (\gamma) = ( \tau_1(\gamma), \dots, \tau_p(\gamma) )$   and $\tau_i (\gamma) = |\gamma^1|+\cdots+|\gamma^i| $, $i=1,\dots, p$.
\end{lemma}

 \begin{proof}  It follows from \eref{eqn 5.2} and \eref{e3.8ii}  that
 \begin{eqnarray*}
     \mathcal{E}^{  \zeta   }_{ s,  t } (y)
   &=&
      \sum_{\substack{\gamma^1 ,\dots, \gamma^p \in \Gamma:
  \\
1\leq   |\gamma^1| , \dots, |\gamma^p| \leq N }
   }
  \mathcal{V}_{\gamma^1} I_{\zeta_1} (y)    \cdots      \mathcal{V}_{\gamma^p} I_{\zeta_p} (y)
    \int^{t }_{s}   \int^{t_p }_{s} \cdots  \int^{t_{2 } }_{s} d x^{\gamma^1}_{t_1}   \cdots d x^{\gamma^{p-1}}_{t_{p-1}}    d x^{\gamma^p }_{t_p}
     .
  \end{eqnarray*}

 We recall    the notation
 $\gamma= (\gamma^1, \dots, \gamma^p)$,    and $\tau_i (\gamma) = |\gamma^1|+\cdots+|\gamma^i| $, $i=1,\dots, p$.
It follows from   Proposition \ref{prop 4} and the definition of the function $H$ in \eref{eqn 5.1} that
  \begin{eqnarray*} 
    \mathcal{E}^{  \zeta   }_{   s,t } (y)
   &=&
   \sum_{\substack{\gamma^1 ,\dots, \gamma^p \in \Gamma:
  \\
1\leq  |\gamma^1| , \dots, |\gamma^p| \leq N }
   }
     H\left(   \gamma, \zeta,   \vec{\tau} (\gamma)   \right)(y)
      \sum_{ \rho \in \Xi_{|\gamma|} ( \vec{\tau}(\gamma)\, )
 }   x^{\gamma \circ \rho^{-1} }_{s, t },
  \end{eqnarray*}
 or
 \begin{eqnarray}\label{e3.8i}
     \mathcal{E}^{  \zeta   }_{ s,  t } (y)
   &=&
    \left(
 \sum_{r=p }^{N   }
       \sum_{ \substack{ |\gamma^1|+\cdots+|\gamma^p|=r}
   }
   +
    \sum_{\substack{
     |\gamma^1|+\cdots+|\gamma^p| > N \\
      |\gamma^1|, \dots, |\gamma^p| \leq N
      }
      }
   \right)
    \sum_{ \rho \in \Xi_{|\gamma|} ( \vec{\tau}(\gamma)\, )
 }
     H(   \gamma, \zeta,   \vec{\tau} (\gamma)   )
     (y)   x^{\gamma \circ \rho^{-1} }_{s, t }
   .
  \end{eqnarray}
  The second term in the above summation  is exactly $Q  (N,p,\zeta )_{s,t}(y)$.
 On the other hand, since
 \begin{eqnarray*}
\{(\gamma^{1},\dots, \gamma^{p}): \sum_{i=1}^{p} |\gamma^{i}| =r\} &=& \bigcup_{\substack{\tau_{1},\dots, \tau_{p}:\\ 1\leq \tau_{1}<\cdots<\tau_{p} =r} } \{ (\gamma^{1},\dots, \gamma^{p}):  |\gamma^{i}|=\tau_{i}-\tau_{i-1}, i=1,\dots, p  \},
\end{eqnarray*}
where $\tau_{0}=0$, we have
    \begin{eqnarray}\label{eqn 5.3ii}
  &&
     \sum_{ \substack{ |\gamma^1|+\cdots+|\gamma^p|=r}
   }
    \sum_{ \rho \in \Xi_r ( \vec{\tau}(\gamma)\, )
 }
     H(   \gamma, \zeta,   \vec{\tau} (\gamma)   )
       x^{\gamma \circ \rho^{-1} }_{s, t }
 \nonumber
      \\
     && =
         \sum_{\substack{
\vec{\tau}=(\tau_1,\dots,\tau_p):
\\
  1\leq \tau_1<\cdots<\tau_p=r}
 }
     \sum_{ \substack{\gamma^i:  |\gamma^i|=\tau_i - \tau_{i-1} \\ i=1,\dots, p }
   } \,
    \sum_{ \rho \in \Xi_r ( \vec{\tau} \, )
 }
     H(   \gamma, \zeta,   \vec{\tau}     )
       x^{\gamma \circ \rho^{-1} }_{s, t }
     .
 \end{eqnarray}
 Notice that for fixed $\vec{\tau} = (\tau_1,\dots, \tau_p)$ such that $  1\leq \tau_1<\cdots<\tau_p=r$ and $\rho \in \Xi_r(\vec{\tau})$,  we have
    \begin{eqnarray*}
     \sum_{ \substack{\gamma^i:  |\gamma^i|=\tau_i - \tau_{i-1} \\ i=1,\dots, p }
   } \,
    H(   \gamma, \zeta,   \vec{\tau}     )
       x^{\gamma \circ \rho^{-1} }_{s, t }
      &=&
      \sum_{\gamma\in \Ga_r}
     H(   \gamma, \zeta,   \vec{\tau}   )
       x^{\gamma \circ \rho^{-1} }_{s, t }
     ,
 \end{eqnarray*}
so   the quantity in \eref{eqn 5.3ii} is equal to
   \begin{eqnarray*}
   \sum_{\substack{
\tau_1,\dots,\tau_p:
\\
  1\leq \tau_1<\cdots<\tau_p=r}
 }
   \sum_{ \rho \in \Xi_r ( \vec{\tau} \, )
 }
 \sum_{\gamma:|\gamma|=r}
     H(   \gamma, \zeta,   \vec{\tau}   )
       x^{\gamma \circ \rho^{-1} }_{s, t }.
 \end{eqnarray*}
 Since $\rho$ is a bijection on $\{1,\dots,r\}$,  by replacing $\gamma \circ \rho^{-1}$ by $\al$,  the above expression becomes
   \begin{eqnarray*}
   \sum_{\substack{
\tau_1,\dots,\tau_p:
\\
  1\leq \tau_1<\cdots<\tau_p=r}
 }
   \sum_{ \rho \in \Xi_r ( \vec{\tau} \, )
 }
 \sum_{\al \circ \rho :|\al|=r}
     H(   \al \circ \rho, \zeta,   \vec{\tau}   )
       x^{\al }_{s, t }.
 \end{eqnarray*}
 Substituting the above expression into \eref{e3.8i}, we obtain   identity \eref{e3.7}. \end{proof}

      \bigskip

 Let  $f \in C^{N}(\RR)$ and $s, t \in [0.T]$. It follows from \eref{e3.4} in Lemma \ref{lem 5.1} that
  \begin{eqnarray}
  \sum_{r=1}^N \sum_{\al \in \Gamma_r  }   x^{\al}_{s,t} \mathcal{V}_\al f
&&=
  \sum_{r=1}^N \sum_{\al \in \Gamma_r  }    x^{\al}_{s,t}
  \sum_{p = 1}^{ r }
 \sum_{ \zeta \in {\Upsilon}_p  }
   \sum_{\substack{
    1\leq \tau_1 < \cdots < \tau_p   = r
  } }
    \sum_{  \rho \in \Xi_{r} (   \vec{\tau}\,) }
         H(   \al \circ \rho, \zeta,  \vec{\tau}    )
   \partial_{\zeta } f
\nonumber
   \\
   &&=
  \sum_{1\leq p \leq r \leq N}
 \sum_{ \zeta \in {\Upsilon}_p  }
   \sum_{\al \in \Gamma_r }    \sum_{\substack{
    1\leq \tau_1 < \cdots < \tau_p   = r
  } }
    \sum_{  \rho \in \Xi_r (   \vec{\tau}\,) }
      x^{\al}_{s,t}    H(   \al \circ \rho, \zeta,  \vec{\tau}    )
   \partial_{\zeta } f.
   \label{e3.9}
\end{eqnarray}
On the other hand, it follows from \eref{e3.7} in Lemma \ref{lem 5.2} that
 \begin{eqnarray*}
 &&
\sum_{p=1}^N
 \sum_{ \zeta \in {\Upsilon}_p  } \left( \mathcal{E}^{  \zeta   }_{ s,  t }  -   Q(N,|\zeta|,\zeta)_{ s,t} \right) \partial_{\zeta} f
 \\
 &&=
\sum_{p=1}^{N}  \sum_{\zeta\in  {\Upsilon}_p }  \partial_{\zeta} f
    \sum_{r=p }^{N   }
        \sum_{\substack{
\tau_1,\dots,\tau_p:
\\
  1\leq \tau_1<\cdots<\tau_p=r}
 }
   \sum_{\al\in \Ga_r}
    \sum_{ \rho \in \Xi_r ( \vec{\tau} \, )
 }
     H(   \al \circ \rho, \zeta,   \vec{\tau}   )
       x^{\al   }_{s, t } ,
\end{eqnarray*}
which is equal to the right-hand side of \eref{e3.9}. Therefore, we obtain the following identity
\begin{eqnarray}\label{eqn5.7ii}
      \sum_{\al \in \Gamma : 1\leq |\al| \leq N}   x^{\al}_{s,t} \mathcal{V}_\al f
   &=&
 \sum_{\zeta\in  {\Upsilon} :1\leq |\zeta| \leq N }  \mathcal{E}^{  \zeta   }_{ s,  t }  \partial_{\zeta} f -  \sum_{\zeta\in  {\Upsilon} : 1\leq |\zeta| \leq N} Q(N,|\zeta|,\zeta)_{ s,t} \partial_{\zeta} f
      .
  \end{eqnarray}

Notice that
\begin{eqnarray}
\mathcal{E}_{s,t}^{(N)}(y)
  &=&
   \sum_{j=1}^m
     \int_s^t    \sum_{ \al \in \Gamma: 0 \leq   |\al| \leq N-1 }
 x_{s,u}^{ \al  }   { \mathcal{V} }_{\al   } V_{j}(y)     d x^{j}_u
\nonumber
  \\
  &=&
   \sum_{j=1}^m
     \int_s^t
     \left(
     V_j(y) +   \sum_{ \al \in \Gamma: 1 \leq   |\al| \leq N-1 }
  x_{s,u}^{ \al  }   { \mathcal{V} }_{\al   } V_{j}(y)  \right)
     d x^{j}_u  \,.
     \label{eq 5.7}
  \end{eqnarray}
Then,   applying \eref{eqn5.7ii} to the second term on  the right-hand side of \eref{eq 5.7} with $f=V_j$, we obtain the following result.

\begin{prop}\label{prop5.1}
Let   $ \mathcal{E}^{(N)}_{s, t}    $ be the   {order-$N$} Taylor expansion defined in  \eref{eqn5.1}. Assume that $V \in C^{N}$. Then   the following equation holds true,
\begin{eqnarray}\label{eqn5.7i}
   \int_s^t \left(  V (y) +   \sum_{\zeta \in {\Upsilon} : 1 \leq |\zeta| \leq N  }   \mathcal{E}_{s,u}^{  \zeta} (y) \partial_{\zeta} V (y)  \right) dx_u  - \mathcal{E}^{(N)}_{ s,t} (y) & =&
   R^1_{s,t} (y)  ,
   \end{eqnarray}
   where
\begin{eqnarray}
  R^1_{s,t} (y)  &= &
 \sum_{\zeta \in {\Upsilon}_N  } \int_{s}^{t}  \mathcal{E}_{s,u}^{  \zeta} (y) \partial_{\zeta} V (y)  d x_{u}
 \nonumber \\
 &&+ \sum_{\zeta\in {\Upsilon} : 1\leq |\zeta| \leq N-1} \int_s^t Q(N-1,|\zeta|,\zeta)_{ s,u}(y)   \partial_{\zeta} V (y) d x_u\,.
\label{eqn5.15}
   \end{eqnarray}
\end{prop}

\medskip

 Applying the chain rule   repeatedly  we obtain
\begin{align}\label{eqn5.8}
V ( y+ \mathcal{E}^{(N)}_{s, t}(y) )
 =&
 V(y) +   \sum_{\zeta \in {\Upsilon} : 1\leq |\zeta| \leq N  }   \mathcal{E}_{s, t}^{ \zeta} (y) \partial_{\zeta} V(y)
+
\sum_{\zeta \in {\Upsilon}_{ N +1}  }
 \mathcal{E}^{  \zeta }_{s, t} \left( \partial_{\zeta} V (y+ \mathcal{E}^{(N)}_{ s, \cdot } (y) )
 \right) (y) ,
 \end{align}
 where for  $\zeta \in \Upsilon_p$, we denote
 \begin{eqnarray}\label{e3.15}
 \mathcal{E}_{s,t} ^\zeta (f) &=&   \int_{ s }^{t } \int_{  s  }^{t_r}       \cdots  \int_{  s  }^{t_{2}}
  f_{t_1}    d\mathcal{E}^{ \zeta_1 }_{s,t_{1 }} \cdots d\mathcal{E}^{  \zeta_{ p-1 }  }_{s,t_{r-1} } d\mathcal{E}^{ \zeta_{  p }  }_{s,t_r }.
\end{eqnarray}
 Note that  the first two terms on the right-hand side of \eref{eqn5.8} are   the integrands of \eref{eqn5.7i}, so  Proposition \ref{prop5.1} implies
   \begin{eqnarray*}
     \int_s^t V\left( y + \mathcal{E}^{(N)}_{s,u} (y)  \right) dx_u  - \mathcal{E}_{s,t}^{(N)}(y)
    &=&
\sum_{\zeta \in  {\Upsilon}_{N+1 } } \int_s^t
 \mathcal{E}^{  \zeta }_{s,u} \left( \partial_{\zeta} V \left( y + \mathcal{E}^{(N)}_{ s,\cdot } (y) \right)
 \right)(y) dx_u +   R^1_{s,t} (y)  .
 \end{eqnarray*}
 In particular, the difference $  \int_s^t V\left( y + \mathcal{E}^{(N)}_{s,u} (y)  \right) dx_u  - \mathcal{E}_{s,t}^{(N)}(y) $ is equal to the summation of multiple   integrals of order  higher than $N$.
 Our next result  is a generalization of   this property.
 We first introduce a modification of the  order-$N$ Taylor expansion.
\begin{Def}\label{def3.4}
Let $\wt{\Gamma}$ be a finite subset  of $\Gamma$ \emph{(}collection of multi-indices with elements in $\{1,\dots, m\}$\emph{)} and denote
$N=\max\left\{|\al|\,, \al\in \wt{\Ga}\right\}$.
 We define   the  \emph{incomplete Taylor expansion}
\begin{eqnarray}\label{eqn5.11}
 \wt{\mathcal{E} }_{ s,t}^{(N)}(y)
=
  \sum_{ \gamma \in \wt{\Gamma}  }
    {\mathcal{V}}_{\gamma   } I(y)     x^{ \gamma }_{s,t}.
  \end{eqnarray}
  \end{Def}
  If $\mathcal{E}_{s,t}^{(N)}(y)$ is  defined as  in \eref{eqn5.1}
 and
 if  $\wt{\Gamma} = \{\gamma \in \Gamma: |\gamma| \leq N\} $  we have
 $ \wt{\mathcal{E}}^{(N)}_{s, t} = \mathcal{E}_{s, t}^{(N)}  $.
In the following,     $  \wt{\mathcal{E}}_{ s,t}^\zeta (f)  $  is the multiple integral  defined as in \eref{e3.15}
 by replacing $    {\mathcal{E}}_{ s,t}^{\zeta_{j}}$ by $   \wt{\mathcal{E}}_{s, t}^{\zeta_{j}}$ in \eref{e3.15}.

  \begin{prop}\label{prop5.2}
Let   $ \wt{\mathcal{E}}^{(N)}_{s, t}$ and  $  {\mathcal{E}}_{s, t}^{(N)} $, $t\in [0,T]$ be the incomplete Taylor expansion and the order-$N$ Taylor expansion   in Definition \ref{def3.4} with $N=\max_{\gamma \in \wt{\Gamma} }|\gamma| $. Assume that $V \in C^{N+1}$. Then   the following equation holds true,
\begin{eqnarray}\label{eq 5.15}
   \int_s^t V(y + \wt{\mathcal{E}}^{(N)}_{s, u} (y) ) dx_u  - \wt{\mathcal{E}}^{(N)}_{s, t} (y) =
    \sum_{e=1}^4 R^e_{ s,t} (y),
   \end{eqnarray}
   where $R^1_{s,t}(y)$ is the same as in \eref{eqn5.15}, and
\begin{eqnarray}
R^2_{s,t} (y) &= &
 \sum_{\zeta \in {\Upsilon}_{N +1}  } \int_s^t
 \wt{\mathcal{E}}_{ s,u}^{ \zeta }  \left( \partial_{\zeta} V(y+ \wt{\mathcal{E}}^{(N)}_{ s,\cdot}  (y) )
   \right)  (y ) dx_u,
 \label{eqn5.15i}
\\
 R^3_{s,t} (y)  &=&
\sum_{\zeta \in {\Upsilon} : 1\leq |\zeta| \leq N  }  \int_s^t
 \left(  \wt{\mathcal{E}}_{s, u}^\zeta(y)
    -     {\mathcal{E}}_{ s,u}^\zeta(y)
 \right) \partial_{\zeta} V(y)
  d x_u,
\label{eqn 5.12}
   \\
R^4_{s,t} (y) &= &
 \mathcal{E}_{ s,t}^{(N)} (y)-  \wt{\mathcal{E}}^{(N)}_{s, t} (y)
  \,.
 \label{eqn5.14i}
  \end{eqnarray}
\end{prop}
 \begin{proof}
As in \eref{eqn5.8},  applying the chain rule several times we obtain
 \begin{align} \label{e3.20}
V (  y + \wt{\mathcal{E}}^{(N)}_{s,t}  (y) )
 =&
 V(y) +   \sum_{\zeta \in {\Upsilon} : 1\leq |\zeta| \leq N  }    \wt{\mathcal{E}} _{s, t}^\zeta(y)
   \partial_{\zeta}  V(y)
+
\sum_{\zeta \in {\Upsilon} : |\zeta| = N+1  }
  \wt{\mathcal{E}} _{s, t}^\zeta \left( \partial_{\zeta} V(y+  \wt{\mathcal{E}}^{(N)}_{s, \cdot} (y)   )
 \right) (y).
 \end{align}
 Integrating both sides of  the above equation with respect to $dx_u$ over $[s,t]$ and then subtracting $\wt{\mathcal{E}}^{(N)}_{s,t} (y) $, we obtain
 \begin{eqnarray*}
   &&
   \int_s^t V(y + \wt{\mathcal{E}}^{(N)}_{ s,u} (y) ) dx_u  - \wt{\mathcal{E}}^{(N)}_{s, t} (y)
   \\
   &&=
         \int_s^t   V (y) dx +   \int_s^t \left(   \sum_{\zeta \in {\Upsilon} : 1 \leq |\zeta| \leq N  }   \mathcal{E}_{s,u}^{  \zeta} \partial_{\zeta} V (y)  \right) dx_u  - \mathcal{E}_{ s,t}^{(N)}(y)
        +   \sum_{e=2}^4 R^e_{ s,t} (y) .
   \end{eqnarray*}
 Applying Proposition \ref{prop5.1} to the above equation we obtain     equation \eref{eq 5.15}.
  \end{proof}

\begin{Def}
Take $\al, \al'\in \Gamma $ such that $ |\al|=r$ and $|\al' |=r+1$.
We say that $\al $ is contained in $\al'$, denoted by $\al \Subset \al'$, if there is an injection $\rho$ from $\{1,\dots, r\}$ to $\{1,\dots, r+1\}$ such that $\al(i)= \al'(\rho(i))$, $i=1,\dots, r$.
\end{Def}

\begin{Def}  \label{hi}
We say that $\wt{\Gamma}\subset \Gamma$ has a  {\it hierarchical structure} if  for any $\al \in \Gamma\setminus\wt{\Gamma}$ and $\al \Subset \al'$, we have $\al' \in \Gamma\setminus\wt{\Gamma}$.
\end{Def}

The following result shows that the difference $   \int_s^t V(y + \wt{\mathcal{E}}^{(N)}_{s, u} (y) ) dx_u  - \wt{\mathcal{E}}^{(N)}_{s, t} (y) $ is equal to the summation of multiple integrals of order ``higher'' than those in   $\{x^{\al}: \al \in   \wt{\Gamma}\}$.
 \begin{prop}\label{prop4.7}
 Let the assumptions be as in Proposition \ref{prop5.2}. Then the following statements hold true:

\noindent \emph{(i)} $ R^{1}_{s,t}(y)$ is a linear combination of the multiple integrals in
$\left\{  x^{\al}_{s,t}: \al \in \Gamma,\, |\al| \geq N+1  \right\},
$ and the coefficients of this  combination are   the products of $ {\mathcal{V}}_{\gamma   } I_{i}(y) $ and $\partial_{\zeta} V_{j} (y) $ for $\gamma \in \Gamma$ such that $|\gamma|\leq N$ and $\zeta \in \Upsilon$ such that $|\zeta|\leq N$, $i=1,\dots, d$, $j=1,\dots, m$.

\noindent \emph{(ii)} $ R^{4}_{s,t}(y)$ is a linear combination of the multiple integrals in
$
\left\{  x^{\al}_{s,t}: \al \in \Gamma \setminus \wt{\Gamma} \right\},
$
and the coefficients   are
$
   {\mathcal{V}}_{\gamma   } I(y)
$ for $\gamma \in \Gamma \setminus \wt{\Gamma}$ such that $|\gamma|\leq N$.

\noindent \emph{(iii)} Assume that $\wt{\Gamma}$ has the {\it hierarchical structure} introduced in Definition \ref{hi}.   Then $ R^{3}_{s,t}(y)$ is a linear combination of the multiple integrals in
$ \left\{  x^{\al}_{s,t}: \al \in \Gamma \setminus \wt{\Gamma}  \right\},
$ and the coefficients   are products of
$    {\mathcal{V}}_{\gamma   } I_{i}(y)
$ and $ \partial_{\zeta} V_{j}(y)  $  for $\gamma \in \Gamma$ such that $|\gamma|\leq N$ and $\zeta \in \Upsilon$ such that $|\zeta|\leq N$, $i=1,\dots, d$, $j=1,\dots, m$.
 \end{prop}

With the help of Proposition \ref{prop5.2} we can now derive an equation for the  global  error function of the  incomplete Taylor  scheme  \eref{eq1.7}   associated with the incomplete Taylor expansion $\wt{\mathcal{E}}^{(N)}_{s,t} (y)$ in \eref{e1.8i} or \eref{eqn5.11}; that is, for the
 global error function   of the numerical scheme
\begin{eqnarray}\label{e3.21}
y^{n}_{t}  &=&
y^n_{t_k}+ \wt{\mathcal{E}}^{(N)}_{ t_k,   t } ( y^n_{t_k   }   ), \quad t \in [t_{k}, t_{k+1}], \quad k=0,1,\dots, n-1.
\end{eqnarray}
Recall that   $t_{k}=kT/n$, $k=0,1,\dots, n-1$, $\lfloor a\rfloor$ is the integer part of $a$ and $a \wedge b$ is the small of $a$ and $b$ for $a, b\in \RR$.
\begin{prop}\label{prop 4.1}
Let $y$ and $y^n$ be the solutions of equation \eref{eqn5.10}  and \eref{e3.21}, respectively.  Let $R_{s,t}^{e} (y)$, $e=1,2,3,4$, be the functions defined   in  \eref{eqn5.15}, \eref{eqn5.15i}, \eref{eqn 5.12} and \eref{eqn5.14i}. 
Assume that $V\in C^{N+1}$.
Then  the error function $y - y^n $ satisfies the equation
\begin{eqnarray}\label{eqn5.21}
y_t - y^n_t  =    \int^t_0\left( V(y_s)- V(y^n_s) \right)       dx_s
 +  \sum_{e=1}^4 \sum_{k=0}^{\lfloor \frac{nt}{T} \rfloor} R^e_{t_k , t_{k+1} \wedge t} (y^n_{t_k})  ,
  \end{eqnarray}
  for $t\in [0,T]$.
  \end{prop}
  \begin{remark}
  Denote  $\epsilon:=y - y^n  $, and  set
  \begin{eqnarray}
  \dot{V}_{j  }(  \xi , \xi')    :=
 \int_0^1 \partial V_{j } (  \theta \xi+ (1-\theta ) \xi' ) d \theta,
\label{eqn5.14}
\end{eqnarray}
for $  \xi, \xi' \in \RR^{d}$, $j=1,\dots, m$.
 The following linear   differential equation   for $\epsilon$ can be     easily  derived from \eref{eqn5.21},
\begin{eqnarray} \label{eq 5.20}
\epsilon_t = \sum_{j=1}^m  \int^t_0 \dot{V}_j ( y_u, y^n_u )   \epsilon_u    dx_u^j
 +  \sum_{e=1}^4 \sum_{k=0}^{\lfloor \frac{nt}{T} \rfloor} R^e_{t_k,   t_{k+1} \wedge t} (y^n_{t_k}) \, .
  \end{eqnarray}
\end{remark}

\noindent \textit{Proof of Proposition \ref{prop 4.1}:}\quad
 By taking $s=t_k$, $t \in [t_k, t_{k+1}]$ and $y=y^n_{t_k}$ in \eref{eq 5.15} we obtain

 \begin{eqnarray*}
   \int_{t_k}^t V(y^n_u ) dx_u  - \wt{\mathcal{E}}^{(N)}_{t_k, t} ( y^n_{t_k} ) =
    \sum_{e=1}^4 R^e_{ t_k ,t} (y^n_{t_k})\,.
   \end{eqnarray*}
This  implies that
    \begin{eqnarray*}
 \sum_{i=0}^k  \int_{t_i}^{t_{i+1} \wedge t} V(y^n_u ) dx_u  -  \sum_{i=0}^k  \wt{\mathcal{E}}^{(N)}_{t_i , t_{i+1} \wedge t} ( y^n_{t_i } ) =
    \sum_{i=0}^k  \sum_{e=1}^4 R^e_{ t_i ,  t_{i+1} \wedge t} (y^n_{t_i}),  ~~t \in [t_k, t_{k+1}],
   \end{eqnarray*}
 or
  \begin{eqnarray*}
    \int_{0}^{t } V(y^n_u ) dx _u -  y^n_t =
    \sum_{i=1}^{\lfloor \frac{nt}{T} \rfloor}  \sum_{e=1}^4 R^e_{ t_i ,  t_{i+1} \wedge t} (y^n_{t_i}).   \end{eqnarray*}
  Equation \eref{eqn5.21}  then follows   by noticing equation \eref{e3.3}.
    \end{proof}

  \medskip

  \subsection{The explicit expression for the error function}\label{section3.2}
  In this subsection we   derive an explicit expression of  the error function $y-y^n$, where $y$ and $y^n$ are   solutions of equation \eref{eqn5.10}  and \eref{e3.21}, respectively,  with $\wt{\mathcal{E}}^{(N)}_{s,t}(y)$ being the incomplete Taylor expansion \eref{eqn5.11}.

 We define the \emph{fundamental equation} of \eref{eq 5.20},
\begin{eqnarray}\label{eqn5.24}
 {\Phi}_{  t} = I + \sum_{j  = 1}^m   \int_0^t
\dot{V}_j (y_s, y^n_s)
 {\Phi}_{s}    d x^{j }_s \,,
\end{eqnarray}
and  its inverse,
\begin{eqnarray}\label{eqn5.23}
 {\Psi}_{t}  = I - \sum_{j  = 1}^m  \int_0^t   {\Psi}_{s}   \dot{V}_j ( y_s, y^n_s) d x^{j }_s\,,
\end{eqnarray}
for  $   t \in [0, T]$. Recall that $\dot{V}$ is defined in \eref{eqn5.14} and $I$ is the $d\times d$ identity matrix.
 The fact that  $ {\Psi}$ is the inverse of the function $ {\Phi}$, i.e.    $  {\Psi}  {\Phi} \equiv I$, can be shown by applying the product rule to $\Psi \Phi$ and taking into account the identity\begin{eqnarray*}
\int_{0}^{t} \Psi_{s} \cdot d\Phi_{s} + \int_{0}^{t} d\Psi_{s} \cdot \Phi_{s}=0.
\end{eqnarray*}

The following result provides an explicit expression for the error function $y-y^{n}$ under the above assumptions. We denote $\eta(t)=t_{k}$ for $t  \in [t_{k},t_{k+1})$.
\begin{theorem}\label{cor 5.9}
Let assumptions be as in Proposition \ref{prop 4.1}. The following expression of $y-y^n$ holds true for   $t \in [0,T]$,
\begin{eqnarray}\label{eq3.24}
y_t-y^n_t &=& \sum_{e=1}^4  {\Phi}_t \sum_{k=0}^{\lfloor \frac{nt}{T}  \rfloor}
 \int_{t_k  }^{t_{k+1} \wedge t}    {\Psi}_s d     R^e_{t_k, s} (y^n_{t_k})
 \\
 &=& \sum_{e=1}^{4} \Phi_{t} \int_{0}^{t} \Psi_{s} dR^{e}_{\eta(s),s} (y^{n}_{\eta(s)})  \,.\nonumber
\end{eqnarray}
\end{theorem}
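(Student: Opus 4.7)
The plan is to apply a variation-of-constants argument to the linear equation (\ref{eq 5.20}) for the error $\epsilon_t := y_t - y_t^n$ derived in the remark after Proposition \ref{prop 4.1}. Set
\begin{eqnarray*}
F_t \;:=\; \sum_{e=1}^{4} \sum_{k=0}^{\lfloor nt/T \rfloor} R^e_{t_k,\,t_{k+1}\wedge t}\bigl(y^n_{t_k}\bigr),
\end{eqnarray*}
so that (\ref{eq 5.20}) reads $\epsilon_t = \sum_j \int_0^t \dot V_j(y_u,y^n_u)\,\epsilon_u\,dx^j_u + F_t$. The goal is then to show that $\epsilon_t = \Phi_t \int_0^t \Psi_s\,dF_s$, after which the two displayed forms in the theorem follow by unfolding the definition of $F$.

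First I would verify the variation-of-constants formula itself. Since $x$ is H\"older of order $\beta>1/2$, all the integrals are Young integrals and the usual product rule for Young integrals applies. Differentiating the product $\Psi_t\epsilon_t$ and using the defining equations (\ref{eqn5.24}) for $\Phi$ and (\ref{eqn5.23}) for $\Psi$, together with the identity $V_j(y_s)-V_j(y^n_s) = \dot V_j(y_s,y^n_s)\,\epsilon_s$ (which follows by the fundamental theorem of calculus and the definition (\ref{eqn5.14}) of $\dot V_j$), I get
\begin{eqnarray*}
d(\Psi_t\epsilon_t)
&=& d\Psi_t\cdot \epsilon_t + \Psi_t\cdot d\epsilon_t \\
&=& -\sum_j \Psi_t \dot V_j(y_t,y^n_t)\epsilon_t\,dx^j_t + \Psi_t\Bigl(\sum_j \dot V_j(y_t,y^n_t)\epsilon_t\,dx^j_t + dF_t\Bigr) \;=\; \Psi_t\,dF_t.
\end{eqnarray*}
Since $\Psi_0 = I$ and $\epsilon_0 = 0$, integrating gives $\Psi_t\epsilon_t = \int_0^t \Psi_s\,dF_s$, whence $\epsilon_t = \Phi_t\int_0^t \Psi_s\,dF_s$ after multiplying by $\Phi_t$ and using $\Phi_t\Psi_t=I$.

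Next I would rewrite the right-hand side in the claimed form. By the piecewise definition of $F$, for any $t\in[t_m,t_{m+1}]$ the function $s\mapsto F_s$ on $[t_k,t_{k+1}\wedge t]$ coincides, up to a constant, with $s\mapsto \sum_e R^e_{t_k,s}(y^n_{t_k})$, so
\begin{eqnarray*}
\int_0^t \Psi_s\,dF_s \;=\; \sum_{e=1}^{4}\sum_{k=0}^{\lfloor nt/T\rfloor}\int_{t_k}^{t_{k+1}\wedge t} \Psi_s\,d R^e_{t_k,s}(y^n_{t_k}),
\end{eqnarray*}
which is the first displayed identity. The second form is just a rewriting using $\eta(s)=t_k$ for $s\in[t_k,t_{k+1})$, so the union of the intervals of integration is all of $[0,t]$.

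The main obstacles are technical rather than conceptual. The first is justifying the product/Leibniz rule used for $d(\Psi\epsilon)$; this is routine for Young integrals given that all integrands are $\beta$-H\"older with $\beta>1/2$, but one needs to know $y$, $y^n$, $\Phi$, $\Psi$ are all sufficiently H\"older, and that $\dot V_j$ is $C^1$ on $\RR^d\times\RR^d$ (which follows from the assumption $V\in C^{N+1}$ with $N\ge 1$). The second delicate point is making sense of $\int_{t_k}^{t_{k+1}\wedge t}\Psi_s\,dR^e_{t_k,s}(y^n_{t_k})$: each $R^e_{t_k,s}(y^n_{t_k})$ is itself given by Young integrals in $s$ (see (\ref{eqn5.15})--(\ref{eqn5.14i})), so this is again a Young integral. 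Once these regularity points are in place, the proof is a direct computation along the lines above.
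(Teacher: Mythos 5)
Your proposal is correct and amounts to the same computation as the paper's proof: the paper applies the product rule for Young integrals to the candidate expression on the right-hand side of \eref{eq3.24}, uses \eref{eqn5.24} and $\Phi\Psi\equiv I$ to check that it satisfies the linear error equation \eref{eq 5.20}, and concludes by uniqueness, while you run the identical product-rule identity in the forward direction as an integrating-factor argument, $d(\Psi_t\epsilon_t)=\Psi_t\,dF_t$, which derives the formula directly and merely sidesteps the explicit appeal to uniqueness of solutions of \eref{eq 5.20}. The remaining steps you describe (splitting $\int_0^t\Psi_s\,dF_s$ over the partition intervals and the H\"older regularity of $y$, $y^n$, $\Phi$, $\Psi$ needed to justify the product rule) are exactly what the paper's one-line proof implicitly relies on.
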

    \begin{proof}  By applying the product rule to the quantity on the right-hand side of   equation \eref{eq3.24} and taking into account   identities \eref{eqn5.24} and $ {\Phi}   {\Psi}   \equiv I$, we can show that this quantity satisfies   equation \eref{eq 5.20}, and by  the uniqueness of the solution of   equation \eref{eq 5.20}, we conclude that it is equal to $y_{t}-y^{n}_{t}$.
    \end{proof}

    \medskip

\section{The incomplete Taylor  scheme}\label{section 6}
Let $y$ be the solution of the differential equation \eref{eqn5.10} and let  $x^{j}$ be  H\"older continuous of order $\beta_{j}
>1/2$.
Given any finite set $\wt{\Ga}$ of $\Ga$   {(}collection of multi-indices with elements in $\{1,\dots, m\}${)}
let $y^{n}$ be the approximation solution defined by     \eref{e3.21},  where  $\wt{\mathcal{E}}^{(N)}_{s,t} (y)$ is the incomplete Taylor expansion   in Definition \ref{def3.4}.  In this section, we study the convergence rate of $y^{n}$ to $y$. Denote $\beta:=\min_{j}\beta_{j} $.

 Let $a, b \in [0,T]$ with $a < b$ and   $\delta \in ( 0, 1)$.
  For a function $z:[0,T] \rightarrow \mathbb{R}$, $\|z\|_{a, b, \delta}  $ denotes the $\delta$-H\"older seminorm of $z$ on $[a, b]$,
that is,
\begin{eqnarray*}
   \|z\|_{a,b, \delta} = \sup \left\{\frac{|z_u- z_v|}{(v-u)^{\delta}}:\, a \leq u < v \leq b\right\}.
\end{eqnarray*}
 We will denote the uniform norm of $z$ on the interval $[a, b]$ by $\| z\|_{a, b, \infty}$. When $a=0$ and $b=T$, we will simply write $\|z\|_\infty$ for $\|z\|_{0,T, \infty}$ and $\|z\|_\delta$ for $\|z\|_{0,T, \delta}$.

The following lemma   provides some upper bounds of $y^{n}$, $\Phi$ and $\Psi$.
 \begin{lemma}\label{lem 6.4}
Let  $\wt \Ga$ be a finite subset of $\Ga$ and assume  $V \in C^{ {N+1}}_{b}$.  Let   $y^{n}$   be the solution of  \eref{e3.21}. We have the following  estimate
 \begin{eqnarray}\label{eqn6.1}
 \|y^n\|_\be \leq  C \|x\|_\be \vee \|x\|_\be^{1/\be +N-1} ,
  \end{eqnarray}
  where $C$ is a constant independent of $n$.
  Furthermore,
  the following estimate holds true for   the functions ${\Phi}$ and ${\Psi}$   defined in \eref{eqn5.24} and \eref{eqn5.23},
   \begin{eqnarray*}
  \|  {\Phi}  \|_{\be} \vee  \|  {\Phi}  \|_{\infty} \vee \|  {\Psi}  \|_{\be} \vee  \|  {\Psi}  \|_{\infty}  \leq  C \exp (  C\|x\|_\be^{1/\be^{2 } +(N-1)/\beta } ).
\end{eqnarray*}
 \end{lemma}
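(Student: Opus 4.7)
The proof splits cleanly into two parts: first establishing the H\"older bound on $y^n$, and then using that bound to control the matrix-valued solutions $\Phi$ and $\Psi$ of the linear Young equations \eref{eqn5.24}--\eref{eqn5.23}. I would proceed in this order, since the coefficient $\dot V(y,y^n)$ appearing in the $\Phi$, $\Psi$ equations depends on $y^n$ through its H\"older norm.

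For the estimate \eref{eqn6.1} on $\|y^n\|_\be$, the plan is to begin with a local bound on each mesh subinterval $[t_k,t_{k+1}]$. For $s,t$ in the same subinterval, the difference $y^n_t-y^n_s$ equals $\sum_{\ga\in\wt\Ga}\mathcal V_\ga I(y^n_{t_k})(x^\ga_{t_k,t}-x^\ga_{t_k,s})$. Applying Chen's multiplicative relation to split the increments of the multiple integrals, together with the standard estimate $|x^\ga_{u,v}|\le C\|x\|_\be^{|\ga|}(v-u)^{|\ga|\be}$ and boundedness of the iterated vector fields $\mathcal V_\ga I$ (from $V\in C^{N+1}_b$ and $|\ga|\le N$), extracts one factor of $(t-s)^\be$ and produces a local H\"older bound of the form $\|y^n\|_{[t_k,t_{k+1}],\be}\le C(\|x\|_\be\vee\|x\|_\be^N)$. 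To upgrade this to a global H\"older bound on $[0,T]$, I would rewrite the scheme as
\begin{equation*}
y^n_t-y^n_s=\int_s^t V(y^n_{\eta(u)})\,dx_u + r^n_{s,t},\qquad \eta(u)=t_k\text{ for }u\in[t_k,t_{k+1}),
\end{equation*}
where $r^n_{s,t}$ collects the order $\ge 2$ terms of the incomplete Taylor expansions telescoped over subintervals. The integral is a Young/Riemann--Stieltjes integral of a piecewise-constant integrand against a $\be$-H\"older path, and the standard Young inequality bounds it by $\|V\|_\infty\|x\|_\be(t-s)^\be+C\|x\|_\be\|y^n\|_\be(t-s)^{2\be}$; a careful telescoping shows that $r^n_{s,t}$ can in turn be bounded by $C(\|x\|_\be^2\vee\|x\|_\be^N)(t-s)^\be$ (the crucial point is that for $r\ge 2$ and $\be>1/2$ the sum $n\,(T/n)^{r\be}$ can be converted into a bounded multiple of $(t-s)^\be$). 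Combining these, on any subinterval of length $\de$ one obtains the fixed-point type inequality $\|y^n\|_{[a,b],\be}\le C(\|x\|_\be\vee\|x\|_\be^N)+C\|x\|_\be\de^\be\|y^n\|_{[a,b],\be}$. Choosing $\de$ of order $\|x\|_\be^{-1/\be}$ absorbs the self-referential term, and stitching the resulting $\sim T\|x\|_\be^{1/\be}$ local H\"older bounds together produces exactly the advertised exponent $1/\be+N-1$.

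For the second estimate, I would view $\Phi$ and $\Psi$ as solutions of linear Young equations driven by $x$ with bounded matrix-valued coefficient $\dot V(y,y^n)$. Since $V\in C^{N+1}_b$ implies $|\dot V(\xi,\xi')-\dot V(\eta,\eta')|\le C(|\xi-\eta|+|\xi'-\eta'|)$, one has $\|\dot V(y,y^n)\|_\be\le C(\|y\|_\be+\|y^n\|_\be)\le C(\|x\|_\be\vee\|x\|_\be^{1/\be+N-1})$, using the classical H\"older estimate for the true Young solution $y$ together with the bound just established for $y^n$. The standard a priori estimate for linear Young ODEs then gives an exponential bound whose exponent is essentially the $1/\be$-th power of the coefficient's $\be$-H\"older norm, yielding the claimed $C\exp(C\|x\|_\be^{1/\be^2+(N-1)/\be})$ for $\|\Phi\|_\infty\vee\|\Phi\|_\be$ and the analogous estimate for $\Psi$. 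The main obstacle in the whole plan is the global H\"older bound on $y^n$: H\"older seminorms are not subadditive across adjacent intervals, so a naive sum of local bounds fails, and one must exploit the Young-integral structure of $y^n$ and absorb at the mesh-compatible length scale $\de\sim\|x\|_\be^{-1/\be}$. The combinatorics of the remainder $r^n_{s,t}$, in particular the need to gain a factor $(t-s)^\be$ despite summing over $O(n)$ subintervals, is the decisive technical point from which the specific exponent $1/\be+N-1$ emerges.
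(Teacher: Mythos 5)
Your proposal is correct and follows essentially the same route as the paper: the bound on $\|y^n\|_\be$ is obtained exactly as in the appendix lemma on numerical solutions (Young-integral estimate for the first-order term with step-function integrand, direct summation of the order $\ge 2$ increments over the mesh, absorption at scale $\de\sim\|x\|_\be^{-1/\be}$, and stitching, which is where the exponent $1/\be+N-1$ arises), and the bounds on $\Phi$ and $\Psi$ come from the standard a priori estimate for linear Young equations (the paper cites Lemma 3.1 of \cite{HLN}) combined with the bound on $\|y^n\|_\be$. The only cosmetic difference is that the paper works with the discrete seminorm $\|\cdot\|_{\be,n}$ before passing to the continuous one, a detail your plan subsumes.
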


\begin{proof}  The upper bound estimate for $y^{n}$ follows immediately from Lemma \ref{prop 4.4}.
 We turn to the function $\Phi$. Consider the   following  system of equations
  \begin{eqnarray*}
  y^n_t &=&y_{0}+\int_{0}^{t} dy^n_s\,,
  \\
  y_t&=& y_0+ \int_0^t V(y_s) dx_s\,,
  \\
  {\Phi} _t &=&
   I + \sum_{j  = 1}^m   \int_0^t
\dot{V}_j (y_u, y^n_u)
{\Phi} _u  d x^{j }_u \,.
  \end{eqnarray*}
  Applying   Lemma 3.1 in \cite{HLN} to the above   system  we obtain
 \begin{eqnarray*}
 \|  {\Phi} \|_{\be} &\leq&  C \exp (C \|y^{n}\|_{\beta}^{1/\beta}  + C\|x\|_\be^{1/\beta} ).
\end{eqnarray*}
  Applying the estimate \eref{eqn6.1} to the right-hand side  of the above inequality, we obtain the upper bound for $
  \|  {\Phi} \|_{\be} $.  The upper bound for $  \|  {\Phi}  \|_{\infty} $ follows from the estimate of $
  \|  {\Phi} \|_{\be} $\,.
 The  upper bounds for ${\Psi} $ can be shown similarly.   \end{proof}

 \medskip
 We also need the following upper bound on  $\wt{\mathcal{E} }^{(N)}_{s,\cdot}(y)$.
\begin{lemma}\label{lem4.2}
Take $  s,s'  \in [0,T] $ such that $ s<s' $ and $y \in \RR^d$.  Assume that $V \in C^{N-1}_{b}$.
Then  for the incomplete Taylor expansion  $\wt{\mathcal{E} }^{(N)}_{s,t}(y)$, $t\in [s,s']$ we have
\begin{eqnarray*}\label{eqn 6.1i}
\|\wt{\mathcal{E} }^{(N)}_{s,\cdot}(y) \|_{s, s', \be}    \leq  K \exp\left( K\sum_{j=1}^{m}\|x^{j}\|_{s,s', \beta_{j}} \right),
\end{eqnarray*}
for some constant $C$ independent of $n$.
\end{lemma}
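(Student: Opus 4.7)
The plan is to expand $\wt{\mathcal{E}}^{(N)}_{s,t}(y) = \sum_{\gamma \in \wt{\Gamma}} \mathcal{V}_\gamma I(y)\, x^\gamma_{s,t}$, bound the $\beta$-H\"older seminorm of each iterated integral $x^\gamma_{s,\cdot}$ on $[s,s']$, and then collect everything using the fact that $\wt{\Gamma}$ is finite. First, since $V \in C^{N-1}_b$ and $|\gamma| \leq N$ for every $\gamma \in \wt{\Gamma}$, the evaluation $\mathcal{V}_\gamma I(y)$ is uniformly bounded by a constant depending only on $V$, $d$ and $N$, so the task reduces to controlling each $\|x^\gamma_{s,\cdot}\|_{s,s',\beta}$.

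The key ingredient is Chen's multiplicative identity for iterated Young integrals: for $s \leq u < t \leq s'$ and $\gamma=(\gamma_1,\ldots,\gamma_r)$,
\begin{eqnarray*}
x^\gamma_{s,t} - x^\gamma_{s,u} \;=\; \sum_{k=0}^{r-1} x^{(\gamma_1,\ldots,\gamma_k)}_{s,u}\,\cdot\, x^{(\gamma_{k+1},\ldots,\gamma_r)}_{u,t},
\end{eqnarray*}
with the empty iterated integral equal to $1$. Combining this with the classical pointwise size estimate
\begin{eqnarray*}
|x^\alpha_{a,b}| \;\leq\; C \prod_{i=1}^{|\alpha|} \|x^{\alpha_i}\|_{a,b,\beta_{\alpha_i}} (b-a)^{\sum_i \beta_{\alpha_i}}\,,
\end{eqnarray*}
which is supplied in the appendix of the paper, each summand in Chen's identity is bounded by $C\prod_{i=1}^r \|x^{\gamma_i}\|_{s,s',\beta_{\gamma_i}} (u-s)^{\sum_{i\leq k}\beta_{\gamma_i}}(t-u)^{\sum_{i>k}\beta_{\gamma_i}}$. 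Since $r-k \geq 1$, the sum $\sum_{i>k}\beta_{\gamma_i} \geq \beta$, so we may factor out $(t-u)^\beta$ and absorb the remaining power $(t-u)^{\sum_{i>k}\beta_{\gamma_i}-\beta}$ as well as the $(u-s)$-factor into a constant depending on $T$ and $N$. This yields $\|x^\gamma_{s,\cdot}\|_{s,s',\beta} \leq C \prod_{i=1}^{|\gamma|} \|x^{\gamma_i}\|_{s,s',\beta_{\gamma_i}}$.

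The final step is a summation. Using the elementary inequality $\prod_{i=1}^r a_i \leq \prod_{i=1}^r (1+a_i) \leq \exp(\sum_{i=1}^r a_i)$ valid for $a_i \geq 0$, together with $|\gamma| \leq N$, one has
\begin{eqnarray*}
\prod_{i=1}^{|\gamma|} \|x^{\gamma_i}\|_{s,s',\beta_{\gamma_i}} \;\leq\; \exp\!\left( N \sum_{j=1}^m \|x^j\|_{s,s',\beta_j}\right),
\end{eqnarray*}
and summing over the finite set $\wt{\Gamma}$ gives the desired exponential bound with $K$ depending only on $V$, $N$, $T$ and the cardinality of $\wt{\Gamma}$.

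The only point requiring care is the multi-dimensional H\"older estimate in the middle step, in particular when the driving coordinates have different regularities $\beta_j$; this is precisely the content of the auxiliary estimates in the appendix, so the remaining work here is mostly bookkeeping. I expect no conceptual obstacle beyond cleanly tracking how the factor $(t-u)^\beta$ is extracted from the highest-regularity end of each Chen-decomposition term.
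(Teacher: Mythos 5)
Your proposal is correct and reaches the paper's bound by the same overall structure: bound the coefficients $\mathcal{V}_\gamma I(y)$ using $V\in C^{N-1}_b$, reduce to estimating $\|x^\gamma_{s,\cdot}\|_{s,s',\beta}$ for each $\gamma\in\wt\Gamma$, and convert the product of H\"older norms into an exponential via $\prod_i a_i\le \exp(\sum_i a_i)$ before summing over the finite set $\wt\Gamma$. The one place you diverge is the middle step: you reconstruct the increment estimate for $x^\gamma_{s,\cdot}$ from Chen's multiplicative identity combined with the \emph{pointwise} size bound, whereas the paper simply quotes its appendix Lemma \ref{lem 8.1i}, which already states the increment bound $|g^{\al}_{s,t'}-g^{\al}_{s,t}|\le K\bigl(\prod_j\|g^j\|_{s,s',\beta_j}\bigr)(s'-s)^{\sum_{j<r}\beta_j}(t'-t)^{\beta_r}$ in exactly the form needed (it is proved there by induction on the Young estimate, with the base point $s$ fixed throughout, so no Chen decomposition is required). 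Your detour is harmless and the bookkeeping you describe (extracting $(t-u)^{\beta}$ from the rightmost block, which always contains at least one letter, and absorbing the leftover powers of $T$) goes through; it buys a self-contained derivation that would also work if one only had pointwise bounds on the iterated integrals, at the cost of an extra sum over the $r$ splitting points of Chen's identity. If you want to match the paper's economy, just cite the increment form of the appendix lemma directly and skip Chen's identity altogether.
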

\begin{proof}   By  Lemma \ref{lem 8.1i}, we have, for any   $\al \in \Gamma_r$
\begin{eqnarray*}
\|x^{\al}_{s,\cdot}\|_{s, s',\beta} &\leq& \prod_{i=1}^{r}\|x^{\al_{i}}\|_{s,s',\beta_{\al_{i}}} \leq   \exp\left( K\sum_{j=1}^{m}\|x^{j}\|_{s,s', \beta_{j}} \right).
\end{eqnarray*}
The  desired estimate follows  immediately by noticing that    $\wt{\mathcal{E} }^{(N)}_{s,t} (y)$  is a  linear combination of  multiple   integrals in $\{x^{\al}_{s,t}: \al \in \wt{\Gamma} \}$.   \end{proof}

  \medskip

For a given finite  subset $\wt{\Ga}$ of $\Ga$, we define
\begin{equation}
\theta=\theta_{\wt{\Ga}}:=\min\left\{ \beta_{\al(1)} + \cdots+ \be_{\al(|\al | ) } -1 \,: \al \in \Ga \setminus  \wt{\Ga}
\right\}\,. \label{e.def-theta}
\end{equation}

\begin{lemma}\label{lem5.1}
 Assume that    $\al $ belongs to $ \Gamma \setminus \wt{\Gamma}  $. Assume that $f$ is a H\"older continuous function of order $\beta$. Then there exists a constant $K$ such that for $t,t' \in [t_{k},t_{k+1}]  $,    $k=0,1,\dots, n-1$, we have
\begin{eqnarray*}
\left| \int_{t}^{t'} f_{u} d x^{\al}_{t_{k},u} \right| &\leq& K  (\|f\|_{\beta}+\|f\|_{\infty}) \exp \left(K \sum_{j=1}^{m}\|x^{j}\|_{ \beta_{j}} \right)n^{ - \theta-1}.
\end{eqnarray*}
\end{lemma}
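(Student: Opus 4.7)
The plan is to apply Young's integral estimate on the short interval $[t_{k},t_{k+1}]$ of length $T/n$ and then to exploit the ``gain'' of size $(u-t_{k})^{\sum_{i}\beta_{\alpha(i)}}$ that the iterated integral $x^{\alpha}_{t_{k},u}$ inherits from starting at the left endpoint $t_{k}$. Writing $r=|\alpha|$ and $S:=\beta_{\alpha(1)}+\cdots+\beta_{\alpha(r)}$, the definition of $\theta$ in \eref{e.def-theta} gives $S\geq\theta+1$ for every $\alpha\in\Gamma\setminus\wt{\Gamma}$, so producing a bound of order $n^{-S}$ is exactly what is needed.

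First I would set $\alpha-:=(\alpha(1),\dots,\alpha(r-1))$ and rewrite the integral as
\begin{eqnarray*}
\int_{t}^{t'} f_{u}\,dx^{\alpha}_{t_{k},u} &=& \int_{t}^{t'} g_{u}\,dh_{u},\quad g_{u}:=f_{u}\,x^{\alpha-}_{t_{k},u},\quad h:=x^{\alpha(r)}.
\end{eqnarray*}
Since $\beta+\beta_{\alpha(r)}>1$, the classical Young estimate gives
\begin{eqnarray*}
\Bigl|\int_{t}^{t'} g_{u}\,dh_{u}\Bigr| &\leq& |g_{t}|\,\|h\|_{\beta_{\alpha(r)}}(t'-t)^{\beta_{\alpha(r)}}+C\,\|g\|_{t,t',\beta}\,\|h\|_{\beta_{\alpha(r)}}(t'-t)^{\beta+\beta_{\alpha(r)}}.
\end{eqnarray*}

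Second I would invoke Lemma \ref{lem 8.1i} together with standard iterated Young bounds to obtain, on $[t_{k},t_{k+1}]$, the estimates $|x^{\alpha-}_{t_{k},u}|\leq CE\,(u-t_{k})^{S-\beta_{\alpha(r)}}$ and $\|x^{\alpha-}_{t_{k},\cdot}\|_{t_{k},t_{k+1},\beta}\leq CE\,(T/n)^{S-\beta_{\alpha(r)}-\beta}$, where $E:=\exp(K\sum_{j}\|x^{j}\|_{\beta_{j}})$. A Leibniz-type split of $g=f\cdot x^{\alpha-}_{t_{k},\cdot}$ then yields $|g_{t}|\leq CE\,\|f\|_{\infty}(T/n)^{S-\beta_{\alpha(r)}}$ and
\begin{eqnarray*}
\|g\|_{t,t',\beta} &\leq& CE\,\bigl[\|f\|_{\beta}(T/n)^{S-\beta_{\alpha(r)}}+\|f\|_{\infty}(T/n)^{S-\beta_{\alpha(r)}-\beta}\bigr].
\end{eqnarray*}
Plugging these into the Young estimate, each of the two terms picks up precisely the factor $(T/n)^{S}$; after absorbing $\|x^{\alpha(r)}\|_{\beta_{\alpha(r)}}$ into $E$ and using $S\geq\theta+1$, the whole right-hand side is bounded by $K(\|f\|_{\beta}+\|f\|_{\infty})\exp(K\sum_{j}\|x^{j}\|_{\beta_{j}})\,n^{-\theta-1}$.

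The main technical point is the second step: one must carefully track the two ``gain'' factors $(u-t_{k})^{S-\beta_{\alpha(r)}}$ and $(T/n)^{S-\beta_{\alpha(r)}-\beta}$ in the pointwise and $\beta$-H\"older estimates of $x^{\alpha-}_{t_{k},\cdot}$, which is where the smallness of the subinterval $[t_{k},t_{k+1}]$ is crucially used; the remainder of the argument is a routine application of Young's inequality and the Leibniz rule for H\"older products.
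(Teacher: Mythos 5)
Your proposal is correct and follows essentially the same route as the paper: the paper's proof is a one-line application of Lemma \ref{lem 7.2} (taking $f^1=\cdots=f^{r-1}\equiv 1$ and $f^r=f$ there), followed by the observations $\prod_{j}\|x^{\al_j}\|_{\beta_{\al_j}}\leq \exp(K\sum_j\|x^j\|_{\beta_j})$ and $\sum_{j}\beta_{\al_j}\geq\theta+1$ for $\al\notin\wt{\Gamma}$, and your two steps (peeling off the last integration via Young's estimate and bounding $x^{\al-}_{t_k,\cdot}$ via Lemma \ref{lem 8.1i}) are precisely the argument by which Lemma \ref{lem 7.2} is proved in the appendix. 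The bookkeeping of the gain factors $(T/n)^{S-\beta_{\al(r)}}$ and $(T/n)^{S-\beta_{\al(r)}-\beta}$ is consistent with \eref{eqn10.1i}, so nothing is missing.
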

\begin{proof}
Take $\al \in \Gamma $ such that $ |\al|=r$. Applying Lemma \ref{lem 7.2} to the integral $\int_{t}^{t'} f_{u} d x^{\al}_{t_{k},u}$,
we obtain
\begin{eqnarray*}
\left| \int_{t}^{t'} f_{u} d x^{\al}_{t_{k},u} \right| &\leq& K (\|f\|_{\beta}+\|f\|_{\infty})  \left(\prod_{j=1}^{r} \|x^{\al_{j}}\|_{\beta_{\al_{j}}} \right)n^{-\sum_{j=1}^{r}\beta_{\al_{j}}}
\nonumber
\\
&\leq&
K (\|f\|_{\beta}+\|f\|_{\infty}) \exp
\left( K \sum_{j=1}^{m}\|x^{j}\|_{ \beta_{j}} \right)
 n^{-\sum_{j=1}^{r}\beta_{\al_{j}}} .
 \end{eqnarray*}
Since  $\al \notin \wt{\Gamma}$,   
we see   that $ \sum_{j=1}^{r} \beta_{\al_{j}} \geq \theta+1 $,  proving  the desired   estimate. \end{proof}

In the following,  we   consider the incomplete Taylor scheme   \eref{e3.21}  defined by
 any finite set $\wt{\Gamma}$.

\begin{lemma}\label{lem5.3}
Assume that $\wt{\Gamma}$ has the hierarchical structure  introduced in Definition \ref{hi}. Assume that $f$ is a H\"older continuous function of order $\beta$, and $R_{s,t}^{e} (y)$, $e=1,2,3,4$, are the functions defined   by  \eref{eqn5.15}, \eref{eqn5.15i}, \eref{eqn 5.12} and \eref{eqn5.14i}. Assume that $V \in C^{N+1}_{b}$.  Then there exists a constant $K$ such that for $t,t' \in [t_{k},t_{k+1}] \subset [0,T]$, we have
\begin{eqnarray}\label{eq5.3i}
\left| \int_{t}^{t'} f_{u} d R^{2}_{t_{k},u} (y) \right|  &\leq& C \left( \|f\|_{\beta} + \|f\|_{\infty}  \right) \exp \left( K  \sum_{j=1}^m \|x^{j}\|_{\beta_{j}}   \right) n^{-(N+2)\beta},
\end{eqnarray}
and
\begin{eqnarray}\label{eq5.3}
\left| \int_{t}^{t'} f_{u} d R^{e}_{t_{k},u} (y) \right|  &\leq& C \left( \|f\|_{\beta} + \|f\|_{\infty}  \right)  \exp  \left( K \sum_{j=1}^{m}\|x^{j}\|_{ \beta_{j}}  \right)n^{-\theta-1}, \quad  e=1,2,3,4 .
\end{eqnarray}

\end{lemma}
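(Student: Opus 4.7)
The plan is to treat the four remainders $R^{e}$ separately: the cases $e = 1, 3, 4$ will reduce directly to Lemma \ref{lem5.1} via Proposition \ref{prop4.7}, while the case $e = 2$ requires a direct Young-type estimate. An initial observation simplifies the task: any multi-index $\al$ with $|\al| = N+1$ automatically lies in $\Gamma \setminus \wt{\Gamma}$ (since $N = \max\{|\al|:\al \in \wt{\Gamma}\}$), so $\theta + 1 \leq (N+1)\beta \leq (N+2)\beta$. Hence \eref{eq5.3} in the case $e = 2$ is implied by \eref{eq5.3i}, and it suffices to establish \eref{eq5.3i} together with \eref{eq5.3} for $e = 1, 3, 4$.

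For $e = 1, 3, 4$, I would invoke Proposition \ref{prop4.7} to write $R^{e}_{t_{k}, u}(y)$ as a finite linear combination of multiple integrals $x^{\al}_{t_{k}, u}$, with coefficients of the form $\mathcal{V}_{\gamma} I(y)$ or products $(\mathcal{V}_{\gamma} I_{i}(y))(\partial_{\zeta} V_{j}(y))$, which are uniformly bounded in $y$ thanks to $V \in C^{N+1}_{b}$. Proposition \ref{prop4.7} ensures that in each of these three cases every index $\al$ appearing belongs to $\Gamma \setminus \wt{\Gamma}$: for $e = 1$ because $|\al| \geq N+1$; for $e = 4$ by construction; and for $e = 3$ precisely because $\wt{\Gamma}$ has the hierarchical structure of Definition \ref{hi}. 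Lemma \ref{lem5.1} then bounds each term $\left|\int_{t}^{t'} f_{u}\, dx^{\al}_{t_{k}, u}\right|$ by $K(\|f\|_{\beta} + \|f\|_{\infty}) \exp\bigl(K \sum_{j} \|x^{j}\|_{\beta_{j}}\bigr) n^{-\theta - 1}$, and summing the finitely many terms yields \eref{eq5.3}.

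For $e = 2$, from the definition \eref{eqn5.15i} one has
\begin{eqnarray*}
dR^{2}_{t_{k}, u}(y) = \sum_{\zeta \in \Upsilon_{N+1}} \wt{\mathcal{E}}^{\zeta}_{t_{k}, u}(h_{\zeta})(y)\, dx_{u},
\end{eqnarray*}
where $h_{\zeta}(v) = \partial_{\zeta} V\bigl(y + \wt{\mathcal{E}}^{(N)}_{t_{k}, v}(y)\bigr)$. Since $\wt{\mathcal{E}}^{\zeta}_{t_{k}, \cdot}$ for $\zeta \in \Upsilon_{N+1}$ is an $(N+1)$-fold iterated Young integral whose differentials $d\wt{\mathcal{E}}^{\zeta_{j}}_{t_{k}, \cdot}$ are $\beta$-H\"older with norms on $[t_{k}, t_{k+1}]$ controlled by Lemma \ref{lem4.2} and hence by $\exp(K\sum_{j}\|x^{j}\|_{\beta_{j}})$, a standard nested Young estimate (the analog of Lemma \ref{lem 7.2} with $N+1$ nested layers) yields
\begin{eqnarray*}
\left|\wt{\mathcal{E}}^{\zeta}_{t_{k}, u}(h_{\zeta})(y)\right| \leq K (\|h_{\zeta}\|_{\infty}+\|h_\zeta\|_\beta) (u - t_{k})^{(N+1)\beta} \exp\Bigl(K \sum_{j}\|x^{j}\|_{\beta_{j}}\Bigr),
\end{eqnarray*}
together with a matching local $\beta$-H\"older estimate in $u$. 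The norms $\|h_{\zeta}\|_{\infty}$ and $\|h_{\zeta}\|_{\beta}$ are controlled using $V \in C^{N+1}_{b}$ and Lemma \ref{lem4.2}. A final Young-type estimate for $\int_{t}^{t'} f_{u} \wt{\mathcal{E}}^{\zeta}_{t_{k}, u}(h_{\zeta})(y)\, dx_{u}$, valid since the combined H\"older exponents exceed $1$, produces a factor $(t' - t)^{\beta} (t - t_{k})^{(N+1)\beta} \leq T^{(N+2)\beta} n^{-(N+2)\beta}$ and yields \eref{eq5.3i}.

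The main obstacle is the $e = 2$ estimate. Naively bounding $|\wt{\mathcal{E}}^{\zeta}_{t_{k}, u}(h_{\zeta})(y)|$ uniformly and then performing a single Young integration would produce only an $n^{-\beta}$ factor, far from the target $n^{-(N+2)\beta}$. One must therefore exploit both the sharp vanishing $|\wt{\mathcal{E}}^{\zeta}_{t_{k}, u}(h_{\zeta})(y)| \lesssim (u - t_{k})^{(N+1)\beta}$ as $u \downarrow t_{k}$ and the matching local H\"older regularity, so that the outer Young integration contributes a full extra $\beta$ on top of the $(N+1)\beta$ already accumulated inside the nested integral.
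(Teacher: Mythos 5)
Your proposal is correct and follows essentially the same route as the paper: Proposition \ref{prop4.7} combined with Lemma \ref{lem5.1} for $e=1,3,4$, the nested Young estimate of Lemma \ref{lem 7.2} together with the bounds of Lemma \ref{lem4.2} for the $e=2$ estimate \eref{eq5.3i}, and the observation $\theta+1\le (N+1)\beta$ to deduce \eref{eq5.3} for $e=2$. Your explicit two-step treatment of the $e=2$ case (inner $(N+1)$-fold estimate, then one outer Young integration) is just an unpacking of the single application of Lemma \ref{lem 7.2} that the paper performs.
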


\begin{proof}
According to Proposition \ref{prop4.7}, for $e=1,3,4$, the integral $ \int_{t}^{t'} f_{u} d R^{e}_{t_{k},u} (y)$ is a linear combination of integrals of the form
\begin{eqnarray*}
\int_{t}^{t'} f_{u} d x^{\al}_{t_{k},u}, \quad \al \in \Gamma\setminus \wt{\Gamma}\,.
\end{eqnarray*}
 So    inequality \eref{eq5.3} for $e= 1,3,4$ follows from   Lemma \ref{lem5.1}.

 Inequality \eref{eq5.3i}   can be shown by applying Lemma \ref{lem 7.2} to the integral
 \begin{eqnarray*}
\int_{t}^{t'} f_{u} d R^{2}_{t_{k},u} (y) &=& \sum_{\zeta\in\Upsilon: |\zeta|=N+1 } \int_{t}^{t'}  \wt{\mathcal{E}}_{ s,u}^{ \zeta }  \left( \partial_{\zeta} V(y+ \wt{\mathcal{E}}^{(N)}_{ s,\cdot}  (y) )
   \right)  (y )
  f_{u  } d x_u
\end{eqnarray*}
  and taking into account the estimates in Lemma \ref{lem4.2}.
Finally,  inequality \eref{eq5.3}  holds for $e=2$ because it is easy to
verify from the definition of $\theta$ that    $(N+1)\beta \geq \theta+1$.
\end{proof}

The following theorem is the    main result in this section.
 \begin{thm}\label{thm4.4}
Let  $\wt{\Gamma} $ be a finite subset of $  \Gamma $ and let $\theta $ be defined by \eref{e.def-theta}. Assume that $\wt{\Gamma}$  has the hierarchical structure  introduced in Definition \ref{hi}. Let $y$   be the solution of equation \eref{eqn5.10}   and  let
 $y^n$ be the solution to  \eref{e3.21}. Assume that $V \in C^{N+1}_{b} $. Then
\begin{eqnarray*}
\sup_{t\in [0,T]} \left| y_{t}-y^n_{t} \right|  & \leq&   G n^{-\theta},
\end{eqnarray*}
where \begin{eqnarray*}
G&=&C  \exp \left(  C \|x\|_\be^{1/\be^{2 } +(N-1)/\beta } + C\sum_{j=1}^{m}\|x^{j}\|_{ \beta_{j}}   \right).
\end{eqnarray*}

\end{thm}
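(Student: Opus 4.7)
The plan is to combine the explicit error representation from Theorem \ref{cor 5.9} with the per–subinterval estimates from Lemma \ref{lem5.3}, using the uniform bounds on $\Phi$ and $\Psi$ coming from Lemma \ref{lem 6.4}.

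First I would invoke Theorem \ref{cor 5.9} to write, for each $t \in [0,T]$,
\begin{equation*}
y_t - y^n_t \;=\; \sum_{e=1}^{4} \Phi_t \sum_{k=0}^{\lfloor nt/T\rfloor} \int_{t_k}^{t_{k+1}\wedge t} \Psi_s \, dR^{e}_{t_k,s}(y^n_{t_k}),
\end{equation*}
so that, in absolute value,
\begin{equation*}
|y_t - y^n_t| \;\le\; \|\Phi\|_\infty \sum_{e=1}^{4}\sum_{k=0}^{\lfloor nt/T\rfloor} \left| \int_{t_k}^{t_{k+1}\wedge t} \Psi_s \, dR^{e}_{t_k,s}(y^n_{t_k}) \right|.
\end{equation*}
Since $\wt{\Gamma}$ is assumed to have the hierarchical structure of Definition \ref{hi} and $V \in C^{N+1}_b$, Lemma \ref{lem5.3} (with the choice $f = \Psi$) applies to each summand and gives, for every $e\in\{1,2,3,4\}$,
\begin{equation*}
\left| \int_{t_k}^{t_{k+1}\wedge t} \Psi_s \, dR^{e}_{t_k,s}(y^n_{t_k}) \right| \;\le\; C\bigl( \|\Psi\|_\beta + \|\Psi\|_\infty \bigr) \exp\Bigl( K\sum_{j=1}^m \|x^j\|_{\beta_j} \Bigr) n^{-\theta-1}.
\end{equation*}

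Next I would sum these over $k=0,1,\dots,\lfloor nt/T\rfloor$. There are at most $n$ terms, so the $k$-sum contributes an extra factor of $n$, producing a bound of order $n^{-\theta}$. Multiplying by $\|\Phi\|_\infty$ and taking the supremum in $t$ then yields
\begin{equation*}
\sup_{t\in[0,T]}|y_t - y^n_t| \;\le\; C\,\|\Phi\|_\infty\bigl(\|\Psi\|_\beta + \|\Psi\|_\infty\bigr)\exp\Bigl(K\sum_{j=1}^m\|x^j\|_{\beta_j}\Bigr) n^{-\theta}.
\end{equation*}
The final step is to replace $\|\Phi\|_\infty$ and $\|\Psi\|_\beta + \|\Psi\|_\infty$ by the exponential bound in Lemma \ref{lem 6.4}, namely
$\|\Phi\|_\infty \vee \|\Psi\|_\beta \vee \|\Psi\|_\infty \le C \exp(C\|x\|_\beta^{1/\beta^2 + (N-1)/\beta})$. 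Absorbing all constants and the product of two exponentials into a single exponential gives precisely the claimed constant $G$.

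The step where I expect all the real work to be buried is the application of Lemma \ref{lem5.3}: that lemma already packages the interplay between the hierarchical structure of $\wt{\Gamma}$, the definition of $\theta$ in \eqref{e.def-theta}, the bound $(N+1)\beta \ge \theta+1$ for the $e=2$ term, and the Young-type estimates for integrals against $dR^e_{t_k,\cdot}$. Once that lemma is in hand, the present proof is essentially a bookkeeping exercise: the representation from Theorem \ref{cor 5.9} is tailor-made so that the $n$ subintervals each contribute $n^{-\theta-1}$, and the geometric loss of one power of $n$ from the summation is exactly compensated by the extra factor in the definition of $\theta$.
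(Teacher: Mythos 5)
Your proposal is correct and follows essentially the same route as the paper: both reduce via the representation in Theorem \ref{cor 5.9}, apply Lemma \ref{lem5.3} with $f=\Psi$ to each subinterval to get the factor $n^{-\theta-1}$, sum over the $n$ subintervals, and then invoke Lemma \ref{lem 6.4} to control $\|\Phi\|_\infty$, $\|\Psi\|_\beta$ and $\|\Psi\|_\infty$. Your closing observation that the substantive content is already packaged in Lemma \ref{lem5.3} matches the structure of the paper's argument exactly.
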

\begin{proof}    Because of   identity  \eref{eq3.24} and  the  estimate of $\|\Phi\|_{\infty}$ in Lemma \ref{lem 6.4},  we only need to show that the quantity
\begin{eqnarray*}
\sum_{e=1}^{4} \sum_{k=0}^{\lfloor \frac{nt}{T}  \rfloor}
\left|
 \int_{t_k  }^{t_{k+1} \wedge t}   {\Psi}_s d     R^e_{t_k, s} (y^n_{t_k})\right|
\end{eqnarray*}
 is bounded by  $G n^{-\theta}$ for $t \in [0,T]$.
Inequality \eref{eq5.3} in Lemma \ref{lem5.3} shows that the above quantity is bounded by
\[
C \sum_{k=0}^{\lfloor \frac{nt}{T} \rfloor}  \left( \|\Psi\|_{\beta} + \|\Psi\|_{\infty}  \right)   \exp \left(K \sum_{j=1}^{m}\|x^{j}\|_{ \beta_{j}}  \right)  n^{-\theta-1},
\]
which is less than 
\begin{eqnarray*}
C  \left( \|\Psi\|_{\beta} + \|\Psi\|_{\infty}  \right)   \exp \left(K \sum_{j=1}^{m}\|x^{j}\|_{ \beta_{j}}  \right)  n^{-\theta}.
\end{eqnarray*}

 Applying the estimates on
  $ \Psi $   given in Lemma \ref{lem 6.4} to the above expression, we obtain the  desired estimate.
\end{proof}

Now we can apply this theorem to obtain the best Taylor scheme. 
It is clear that the possible rates of convergence  are of the form $n^{-\theta}$, where $\theta$ is a nonnegative integer linear combination of $\beta_i$, $i=1,\dots, m$ subtracting one:
\begin{equation}
\theta= \sum_{j=1}^{m}k_j \beta_{j}  -1, \,\,    k_j =0,1,2,\dots \text{ and } j=1,\dots,m  \,.
\label{e.theta-form}
\end{equation}
Given a rate of the above  form we define %
%
%
%
\begin{eqnarray}
      \Gamma(\theta) : = \{ \al \in \Gamma:   \beta_{\al(1)} + \cdots+ \be_{\al(|\al | ) } -1 < \theta \}.
      \label{e.gamma-theta}
 \end{eqnarray}
\begin{lemma}\label{l.4.6}  If $\theta$ has the form \eref{e.theta-form},
then $\theta_{\Ga(\th)}=\theta$, where   $\theta_{\Ga(\th)}$ is defined by \eref{e.def-theta}.
\end{lemma}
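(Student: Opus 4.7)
The plan is to verify the two inequalities $\theta_{\Ga(\th)} \geq \th$ and $\theta_{\Ga(\th)} \leq \th$ directly from the definitions, which together yield the claim.

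For the inequality $\theta_{\Ga(\th)} \geq \th$, observe that $\Ga\setminus\Ga(\th)=\{\al\in\Ga:\be_{\al(1)}+\cdots+\be_{\al(|\al|)}-1\geq \th\}$ by the definition \eref{e.gamma-theta} of $\Ga(\th)$. Taking the minimum of $\be_{\al(1)}+\cdots+\be_{\al(|\al|)}-1$ over this set immediately gives a lower bound of $\th$, i.e.\ $\theta_{\Ga(\th)}\geq \th$ by \eref{e.def-theta}. This direction is essentially tautological.

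For the reverse inequality $\theta_{\Ga(\th)}\leq \th$, the idea is to exhibit one specific multi-index $\al^{*}\in\Ga\setminus\Ga(\th)$ that realizes the value $\th$ exactly. Using the assumed form $\th=\sum_{j=1}^{m}k_{j}\be_{j}-1$ with nonnegative integers $k_{j}$ (and at least one $k_{j}\geq 1$, which is implicit since otherwise $\th=-1$ is not a meaningful rate), I would construct $\al^{*}$ by concatenating $k_{j}$ copies of the symbol $j$ for each $j=1,\dots,m$ into a single multi-index (order is irrelevant). Then $|\al^{*}|=\sum_{j}k_{j}\geq 1$, so $\al^{*}\in\Ga$, and
\[
\be_{\al^{*}(1)}+\cdots+\be_{\al^{*}(|\al^{*}|)}-1=\sum_{j=1}^{m}k_{j}\be_{j}-1=\th.
\]
Since this value is not strictly less than $\th$, we have $\al^{*}\notin\Ga(\th)$, so $\al^{*}\in\Ga\setminus\Ga(\th)$. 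The definition of $\theta_{\Ga(\th)}$ as a minimum over this set then forces $\theta_{\Ga(\th)}\leq \th$, completing the proof.

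There is no genuine obstacle here; the lemma is a consistency check confirming that the design \eref{e.gamma-theta} of $\Ga(\th)$ is sharp, in the sense that the rate $\th$ of the form \eref{e.theta-form} is actually attained by some multi-index just outside the truncation set $\Ga(\th)$. The only mild caveat to mention is the edge case when $\sum_{j}k_{j}=0$, which is excluded because multi-indices in $\Ga$ have length at least one; for any admissible rate arising from Theorem \ref{thm4.4}, this assumption is automatic.
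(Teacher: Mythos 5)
Your proof is correct and follows essentially the same route as the paper: the same explicit multi-index obtained by concatenating $k_j$ copies of each symbol $j$ establishes $\theta_{\Ga(\th)}\le\th$, and your direct observation that the minimum over $\Ga\setminus\Ga(\th)$ is at least $\th$ is just the contrapositive form of the paper's short contradiction argument. The remark about the degenerate case $\sum_j k_j=0$ is a sensible extra precaution but not a substantive difference.
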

\begin{proof}Let  $\theta=\sum_{j=1}^{m}k_j \beta_{j}  -1 $ for some $k_j$, $j=1, \cdots, m$.
Consider
\[
\al=(\overbrace{1,\dots, 1}^{k_1}, \dots, \overbrace{m, \dots, m}^{k_m})\,.
\]
Then $ \beta_{\al(1)} + \cdots+ \be_{\al(|\al | ) }-1=\theta$ and hence $\al\not\in \Ga(\theta)$.
This  shows that $\theta_{\Ga(\th)}\le \th$.  If $\theta_{\Ga(\th)}<\theta$,
then, by the definition of $\theta_{\Ga(\th)}$,  there is an $\al=(\al(1),\dots, \al(r))\in \Ga\setminus \Ga(\theta)$ such that
$\beta_{\al(1)} + \cdots+ \be_{\al(|\al | ) }-1<\theta$.
On the other hand, by our definition of $\Ga(\th)$, $\al\in \Ga(\th)$. This is
a contradiction. Thus $\theta_{\Ga(\th)}=\theta$.
\end{proof}

\begin{remark}\label{remark4.5}
\begin{description}
\item{(i)}\ From Lemma  \ref{l.4.6} and from   Theorem \ref{thm4.4}, we see that a possible rate
has the form $n  ^{-\theta}$, where   $\theta$  is of the form
\eref{e.theta-form}, and for a rate of this form,  the best choice of the incomplete Taylor scheme \eref{e3.21}  is
$\wt{\Ga}=\Ga(\th)$.
 
\item{(ii)}\
  When $\beta_{i} = \beta$, $i=1,\dots, m$ for $\beta >1/2$, 
   $\theta =(N+1)\beta-1 $ and $\Gamma(\theta)$ becomes
  \begin{eqnarray*}
\Gamma(\theta) &=& \{\al \in \Gamma: |\al|\leq N \}.
\end{eqnarray*}
So in this case, the  best Taylor scheme is the complete Taylor scheme:
\begin{eqnarray*}
 y^{n}_{t}  &=&
y^n_{t_k}+ \mathcal{E}_{ t_k,   t }^{(N)}  ( y^n_{t_k   }   ),  \quad y^{n}_{0} = y_{0},
\end{eqnarray*}
for $  t \in [t_k, t_{k+1}]$, $  k=0,1,\dots, n-1$. According to Theorem \ref{thm4.4}, its convergence rate is $n^{1-(N+1)\beta}$,  which coincides with the result obtained in \cite{FV}.
\end{description}
  \end{remark}

  \section{$L_p$-estimates of weighted random sums and multiple integrals}\label{section5}
  In the first subsection, we recall some definitions on  fractional integrals and derivatives
  to fix  the notation we are going to use.
  In the subsequent  three subsections, we derive some $L_p$-estimates of weighted random sums and multiple integrals, which are  needed
  to obtain the rate of convergence for the modified Taylor scheme \eref{eq1.7-modify}.
\subsection{Elements of fractional calculus}
Take $f \in L_{1}  ([0,T])$ and $\delta>0$. The left-sided and right-sided fractional Riemann-Liouville integrals of $f$ of order $\delta$ are defined, for almost all $t \in (a,b)$, by
\begin{eqnarray*}
I^{\delta}_{a+} f(t) &=& \frac{1}{\Gamma(\delta)} \int_{a}^{t} (t-s)^{\delta-1} f(s) ds
\end{eqnarray*}
and
\begin{eqnarray*}
I^{\delta}_{b-} f(t) &=& \frac{1}{\Gamma(\delta)} \int_{t}^{b} (s-t)^{\delta-1} f(s) ds,
\end{eqnarray*}
respectively, where   $\Gamma (\delta) = \int_{0}^{\infty} r^{\delta-1} e^{-r} dr$ is the Gamma function. For $p\geq 1$,  let $I^{\delta}_{a+}(L_{p}  ([0,T]) )$ (respectively  $I^{\delta}_{b-}(L_{p}  ([0,T]) )$) be the class of functions $f$ which may be represented as an $I^{\delta}_{a+}$- ($I^{\delta}_{b-}$-) integral of some $L_{p}$-function $\varphi$. If $f \in I^{\delta}_{a+} (L_{p}  ([0,T])) $ (respectively $f \in I^{\delta}_{b-} (L_{p}  ([0,T])) $) and $0<\delta<1$ then the fractional Weyl derivative is defined as
\begin{eqnarray*}
D^{\delta}_{a+} f(t) &=& \frac{1}{\Gamma(1-\delta)} \left( \frac{f(t)}{(t-a)^{\delta}} + \delta \int_{a}^{t} \frac{f(t) -f(s) }{(t-s)^{\delta+1}} ds \right) \mathbf{1}_{(a,b)}(t)
\end{eqnarray*}
 \begin{eqnarray*}
\bigg(\text{resp. } D^{\delta}_{b-} f(t) &=& \frac{ 1 }{\Gamma(1-\delta)} \left( \frac{f(t)}{(b-t)^{\delta}} + \delta \int_{t}^{b} \frac{f(t) -f(s) }{( s-t)^{\delta+1}} ds \right) \mathbf{1}_{(a,b)}(t)
 \bigg),
\end{eqnarray*}
where $a<t<b$.

\subsection{$L_{p}$-estimate of weighted random sums}
 Let $\zeta= \{ \zeta_{k, n} , n \in \mathbb{N}, k=0, 1, \dots, n\}$ be a double sequence of random variables.  The aim of this subsection is to    provide an $L_p $-estimate of the weighted summations of this sequence, which we need for the rate of convergence of the modified Taylor scheme.
 We first introduce the space of H\"older continuous functions in $L_{p}$.
\begin{Def}\label{def5.1}
Let $f$ be a stochastic process   on $[0,T]$ such that $f(t) \in L_p$ for each $t$. We say that $f$ is \emph{H\"older  continuous of order $\beta$ in $L_p$} if
\begin{eqnarray*}
\| f(t) - f(s) \|_{p} &\leq& K |t-s|^{\beta}, \quad s,t \in [0,T]
\end{eqnarray*}
for  $\beta>0$. We define the seminorm
\begin{eqnarray*}
\|f\|_{\beta, p} &=& \sup \left\{ \frac{\| f(t) - f(s) \|_{p}   }{(t-s)^{\beta} } :\, 0\leq s< t\leq T \right\}.
\end{eqnarray*}
\end{Def}
In the following, we denote   $t_{k}=kT/n$, $k=0,1,\dots, n$ and $\eta(t) = t_{k}$ for $t \in [t_{k},t_{k+1})$.
   \begin{prop}\label{lem 2.4}
Let $p \geq 1$, $q,q' >p$ such that $  \frac 1p= \frac{1}{q} + \frac{1}{q'} $ and let $\beta,  \beta'$ be in $
 (0,1)$ such that $ \beta+\beta'>1 $.
  Let $\zeta= \{ \zeta_{k, n} , n \in \mathbb{N}, k=0, 1, \dots, n\}$
  satisfy
\begin{eqnarray}\label{eq 2.7}
 \mE \left(\left|  \sum_{k=j+1}^{ i } \zeta_{k , n}   \right|^{q} \right) &\leq&
  L \left( \frac{i-j}{n}\right)^{ \beta' q}\,
    \end{eqnarray}
for all $ i,j =0, 1, \dots, n $, $i>j $  and  for some constant $L>0$.
Let $f$ be a continuous process and   assume that $f$ is   H\"older continuous of order $\beta$ in $L_{q'}$.
   Then for $i,j=0, 1,\dots, n-1$, $ i>j$,
\begin{eqnarray*}
\left\| \sum_{k= j +1   }^{ i  } f(t_k) \zeta_{k, n} \right\|_p &\leq&  cL   \|f\|_{\beta, q'} \left(\frac{i-j}{n}\right)^{\beta+\beta'} + cL \|f(t_{j})\|_{q'} \left(\frac{i-j}{n}\right)^{\beta'} ,
\end{eqnarray*}
where $c$ is a constant depending on $T$ and the parameters $p, q, q', \beta, \beta'$.
\end{prop}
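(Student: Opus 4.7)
My plan is to split the sum by the elementary identity
$$\sum_{k=j+1}^{i} f(t_k)\,\zeta_{k,n} \;=\; f(t_j)\sum_{k=j+1}^{i}\zeta_{k,n} \;+\; T(j,i),$$
where $T(a,b) := \sum_{k=a+1}^{b}(f(t_k)-f(t_a))\,\zeta_{k,n}$. For the first summand, H\"older's inequality with exponents $q$ and $q'$ (using $\tfrac{1}{p}=\tfrac{1}{q}+\tfrac{1}{q'}$) combined with hypothesis \eqref{eq 2.7} immediately gives $\bigl\|f(t_j)\sum_{k=j+1}^{i}\zeta_{k,n}\bigr\|_p \le L^{1/q}\,\|f(t_j)\|_{q'}\bigl(\tfrac{i-j}{n}\bigr)^{\beta'}$, which accounts for the second term in the conclusion. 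The real task is therefore to prove $\|T(j,i)\|_p \le CL^{1/q}\|f\|_{\beta,q'}\bigl(\tfrac{i-j}{n}\bigr)^{\beta+\beta'}$.

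For this I would run a dyadic \emph{sewing} argument. The key identity, valid for any intermediate index $m$ with $a<m<b$, is
$$T(a,b) \;=\; T(a,m) \;+\; T(m,b) \;+\; \bigl(f(t_m)-f(t_a)\bigr)\sum_{k=m+1}^{b}\zeta_{k,n}.$$
By H\"older applied to the cross term and \eqref{eq 2.7} applied to the range from $m+1$ to $b$, together with $\max(m-a,b-m)\le b-a$, its $L_p$-norm is bounded by $L^{1/q}\|f\|_{\beta,q'}\,T^{\beta}\bigl(\tfrac{b-a}{n}\bigr)^{\beta+\beta'}$. Setting $\psi(N):=\sup\{\|T(a,b)\|_p:b-a=N\}$ and taking $m$ at the dyadic midpoint of $[a,b]$ produces the recursion
$$\psi(N)\le 2\psi(\lceil N/2\rceil)+ CL^{1/q}\|f\|_{\beta,q'}\bigl(\tfrac{N}{n}\bigr)^{\beta+\beta'}.$$

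Iterating $r\approx\log_2 N$ times, together with the trivial one-term bound $\psi(1)\le L^{1/q}\|f\|_{\beta,q'}T^{\beta}n^{-(\beta+\beta')}$, one obtains
$\psi(N)\le 2^r\psi(1)+CL^{1/q}\|f\|_{\beta,q'}\bigl(\tfrac{N}{n}\bigr)^{\beta+\beta'}\sum_{k=0}^{r-1}2^{k(1-\beta-\beta')}$. Since $\beta+\beta'\ge 1$, the inequality $N\le N^{\beta+\beta'}$ (valid for $N\ge 1$) absorbs the leading $2^r\psi(1)\asymp N\psi(1)$ into a constant multiple of $(N/n)^{\beta+\beta'}$. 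The main obstacle — and indeed the whole purpose of the Young-type hypothesis — is the convergence of the geometric series $\sum_{k\ge 0}2^{k(1-\beta-\beta')}$, which holds precisely because $\beta+\beta'>1$. Putting the two pieces together yields the proposition, with a constant depending only on $T,p,q,q',\beta,\beta'$ (and with $L^{1/q}$ in place of the written $L$, which I read as a notational convention). An alternative route, more in line with the paper's recollection of fractional calculus just above, would be to encode the sum as a Young-type integral against the step process built from the partial sums of $\zeta_{k,n}$ and apply Z\"ahle's fractional integration-by-parts formula; this also hinges on $\beta+\beta'>1$, now through the mapping properties of the Weyl derivatives.
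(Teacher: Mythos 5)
Your argument is correct, but it is not the route the paper takes. The paper encodes the sum as the Riemann--Stieltjes-type integral $\int_{(t_j,t_{i+1})} f\,dg_n$ against the step process $g_n(t)=\sum_{k\le \lfloor nt/T\rfloor}\zeta_{k,n}$ and then invokes Z\"ahle's fractional integration-by-parts formula, choosing $\delta\in(1-\beta',\beta)$ and estimating $\|D^\delta_{a+}f_a\|_{q'}$ and $\|D^{1-\delta}_{b-}g_{n,b}\|_q$ pointwise (the delicate part there is the singular contribution near $\eta(t)+T/n$ coming from the jumps of $g_n$, which produces an extra $n^{-\beta'}(\eta(t)+\frac Tn-t)^{\delta-1}$ term that must be integrated against $(t-a)^{\beta-\delta}$). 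Your proof replaces all of this by a discrete sewing/Young dyadic iteration on $T(a,b)=\sum_{k=a+1}^b(f(t_k)-f(t_a))\zeta_{k,n}$: the splitting identity, the H\"older bound on the cross term, and the convergence of $\sum_k 2^{k(1-\beta-\beta')}$ are exactly the discrete analogue of what the fractional derivatives accomplish, and both proofs hinge on $\beta+\beta'>1$ in the same place. Your version is more elementary and self-contained (no fractional calculus needed), and it isolates the boundary term $f(t_j)\sum_k\zeta_{k,n}$ exactly as the paper's formula does via $f(a)(g_n(b-)-g_n(a+))$; the paper's version has the advantage of being stated for genuine integrals, which it reuses elsewhere. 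Two cosmetic points: to justify $\psi(N)\le 2\psi(\lceil N/2\rceil)+\cdots$ you should define $\psi(N)$ as a supremum over all pairs with $b-a\le N$ (so that it is monotone), since a priori $\psi(\lfloor N/2\rfloor)$ need not be dominated by $\psi(\lceil N/2\rceil)$; and your reading of the constant as $L^{1/q}$ rather than $L$ is the one actually consistent with hypothesis \eref{eq 2.7} as written, an imprecision already present in the paper's own proof.
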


\begin{proof}
For each $t \in [0, T]$ we denote
\begin{eqnarray*}
g_n(t): = \sum_{k=0}^{   \lfloor \frac {nt}T  \rfloor } \zeta_{k , n} \,.
\end{eqnarray*}
Then we can  write
\begin{eqnarray*}
\sum_{k= j +1   }^{ i  } f(t_k) \zeta_{k, n} &=&
\int_{ (t_{j},t_{i+1 }) } f(t) d g_n(t).
\end{eqnarray*}
We shall use    the following  fractional integration by parts formula to deal with the above integral
(see   Theorem 2.4 in \cite{Zahle}).
 \begin{equation}\label{e5.3i}
\int_{(a,b)}  fdg_{n}=  \int_{a}^{b} D^{\delta}_{a+} f_{a} (t) D^{1-\delta}_{b-} g_{n,b}(t) dt
 +f(a)(g_{n}(b-)-g_{n}(a+)),
\end{equation}
where   we denote $f_{a}(t) = \mathbf{1}_{(a,b)}(t) (f(t)-f(a) )$, $g_{n,b}(t) = \mathbf{1}_{(a,b)}(t) (g_{n}(t) -g_{n}(b-))$ and  $\delta \in [0,1]$.

 We denote $a:= t_j$ and $b:=t_{i+1}$.
Let $\delta$ be such that $1-\beta' <\delta<\beta$. By the definition of the fractional derivative it is easy to show that
\begin{eqnarray} \label{eqn 2.5i}
\left\| D^{\delta}_{a+} f_{a+}(t) \right\|_{q'}
&\leq&
   \frac{\|f\|_{  \beta, q'} }{\Gamma (1-\delta)}\frac{\beta}{\beta-\delta} (t-a)^{\beta-\delta}.
 \end{eqnarray}
On the other hand,
\begin{eqnarray}\label{eqn 2.7i}
\left\| D^{1-\delta}_{b-} g_{n,b}(t) \right\|_q
&\leq &
 \frac{1 }{\Gamma ( \delta)} \left( \left\|  \frac{ g_n(t)  -g_n(b-)  }{(b-t)^{1-\delta}}  \right\|_q + (1-\delta)  \left\|  \int^b_t \frac{g_n(t) - g_n(s)}{(s-t)^{2-\delta }} ds \right\|_q \right)\,.
   \end{eqnarray}
   We first consider the second term on the right-hand side of the above inequality. When  $t \geq b-\frac Tn$, we have $g_n(t) - g_n(s)=0$ and thus the second term is equal to zero. When $t< b-\frac Tn$,  we have
   \begin{eqnarray}\label{eqn 2.6}
   &&
    \left\|
    \int^b_t \frac{g_n(t) - g_n(s)}{(s-t)^{2-\delta }} ds
   \right\|_q
   \nonumber
   \\
   && =
    \left\|
    \int^b_{\eta(t) + \frac Tn} \frac{g_n(t) - g_n(s)}{(s-t)^{2-\delta }} ds
   \right\|_q
\leq
    L\int^b_{\eta(t) + \frac Tn} \frac{ [ \eta(s) - \eta(t)]^{\beta'}}{(s-t)^{2-\delta }} ds
    \nonumber
    \\
    &&\leq
    2^{\be'} L \int^b_{\eta(t) + 2 \frac Tn}  (s-t)^{\beta' + \delta -2 }  ds
    +
    L\int^{ \eta(t) + 2 \frac Tn }_{\eta(t) + \frac Tn} \frac{ [ \eta(s) - \eta(t)]^{\beta'}}{(s-t)^{2-\delta }} ds
   \nonumber
    \\
   && \leq
   \frac{ 2^{\be'} L}{   \beta'+\delta-1 } (b-t)^{ \beta'+\delta-1 }
    +
    \frac{L T^{\beta' } }{n^{\beta' } (\delta-1) } [ ( \eta(t) + 2 \frac Tn -t )^{\delta-1} - ( \eta(t) + \frac Tn -t )^{\delta-1} ]
   \nonumber
    \\
   && \leq
   \frac{ 2^{\be'} L}{   \beta'+\delta-1 } (b-t)^{ \beta'+\delta-1 }
    +
    \frac{L T^{\beta' } }{n^{\beta' } ( 1 - \delta ) } ( \eta(t) + \frac Tn -t )^{\delta-1}
    ,
   \end{eqnarray}
where in the first inequality we used the assumption \eref{eq 2.7}.
For the first term in \eref{eqn 2.7i}, since
\begin{eqnarray*}
\|g_n(t)  -g_n(b-)\|_q \leq L | b-\frac{T}{n} - \eta(t) |^{\beta'}
\leq
L|b-t|^{\beta'}
\end{eqnarray*}
 for $t \in (a, b)$,
we have
\begin{eqnarray*}
\left\|  \frac{ g_n(t)  -g_n(b-)  }{(b-t)^{1-\delta}}  \right\|_q
\leq
L
(b-t)^{\beta'+\delta-1}.
\end{eqnarray*}
Substituting  the above inequality and \eref{eqn 2.6}  into  \eref{eqn 2.7i} we obtain
\begin{eqnarray}\label{eqn 2.8i}
\left\| D^{1-\delta}_{b-} g_{n,b}(t) \right\|_q
&\leq &
  cL  (b-t)^{\beta'+\delta-1}
  +cL  n^{-\beta'} ( \eta(t) + \frac Tn -t )^{\delta-1}  .
   \end{eqnarray}
   By the fractional integration by parts formula \eref{e5.3i} and by \eref{eqn 2.5i} and \eref{eqn 2.8i} we obtain
   \begin{eqnarray*}
\left\|
\int^b_a f (t) dg_{n} (t)
\right\|_p
 &\leq&
      cL \|f\|_{\beta, q'}
      \int^b_a
      (t-a)^{\beta-\delta}
      \left[
       (b-t)^{\beta'+\delta-1}
  +  n^{-\beta'} ( \eta(t) + \frac Tn -t )^{\delta-1}
      \right]
      dt
      \\
&&\quad\quad   + \|f(a)\|_{q'} (b-a)^{\beta'}
  \\
 & \leq&
   cL \|f\|_{\beta,q'} (b-a)^{\beta+\beta'}
   +
   cL \|f\|_{\beta,q'}
    \int^b_a
      (t-a)^{\beta-\delta}
         n^{-\beta'} ( \eta(t) + \frac Tn -t )^{\delta-1}
        dt
         \\
&&\quad\quad
 +
        \|f(a)\|_{q'} (b-a)^{\beta'}
  .
\end{eqnarray*}
The lemma then follows from the following calculation,
 \begin{eqnarray*}
 &&
 \int^b_a
      (t-a)^{\beta-\delta}
         n^{-\beta'} ( \eta(t) + \frac Tn -t )^{\delta-1}
        dt
       \\
     &&  \leq
       n^{-\beta'}
       \sum_{k=j}^{i -1  }
       (t_{k+1}-a)^{\beta-\delta}
         \int^{ t_{k+1} }_{ t_k}
     ( \eta(t) + \frac Tn -t )^{\delta-1}
        dt
        \\
    &&    \leq
       n^{-\beta'}
       \sum_{k=0}^{i-j }
       \left(\frac{k+1}{n}\right)^{\beta-\delta}
      \frac{1}{ \delta} \left(  \frac Tn \right)^{\delta}
 =
        cn^{-\beta-\beta'} \sum_{k=0}^{i-j} (k+1)^{\beta-\delta}
        \\
      &&  \leq
         cn^{-\beta-\beta'} (i-j)^{1+\beta-\delta}
\leq
        c\left( \frac{i-j}{n} \right)^{\beta+\beta'}.
       \end{eqnarray*}
\hfill$\square$

\subsection{ Monotonicity in the $L_{2}$-norm of multiple   integrals } \label{section5.3}
In this subsection we derive a  monotonicity result on the $L_{2}$-norm of multiple   integrals with  respect to the fractional Brownian motion. 
We recall that a standard one-dimensional fractional Brownian motion (fBm) is a centered Gaussian process $B=\{B_t, t\ge 0\}$ with covariance given by
\[
\mE[ B_tB_s ] =\frac 12 \left( t^{2H} + s^{2H} -|t-s|^{2H} \right),
\]
where $H\in (0,1)$ is the Hurst parameter.
Our proof is based on the approximation of   multiple   integrals by   sums of products of fBm increments. Throughout this subsection,  we
assume that, for $j=1,\dots, N$,   $B^{j}$ is either a  fBm with Hurst parameter $H_j>1/2$ or the identity function.  Assume in addition that for $j,j'=1,\dots, N$,  the two processes  $B^j$ and $B^{j'}$ are either   mutually independent or equal.

 We first recall  a formula for the expectation of a   product of increments.
Take an even number $r$. There are $( r -1 ) !!$ ways to arrange the elements of $\{1,\dots, r\}$ into pairs. We denote by $ \mathcal{R}_{r} $ the collection of these ways.  Assume that each $\tau \in \mathcal{R}_{r} $ is of the form $\tau = \{(\tau_{1}, \tau_{2}),\dots, (\tau_{r-1}, \tau_{r})\}$, where $\tau_{i} \in \{1,\dots, r\}$.
    For a subinterval $I=(s,t)$ of $[0,T]$, we denote $B_{I}^j = B^j_{s,t}=B^j_{t}-B^j_{s}$. The following is a consequence
    of the Feynman diagram formula (see \cite[page 16]{janson}).

 \begin{lemma}\label{lem5.2}
 Let $I_{i}$, $i=1,\dots, r$, be subintervals of $[0,T]$.

\noindent \emph{(i)} If  $B^{j }$, $j=1,\dots, r$ are fBms, then the following identity holds true,
\begin{eqnarray*}
\mE[B_{I_{1}}^{ {1}} \cdots B_{I_{r}}^{ {r}}] & =& \begin{cases} \sum\limits_{\tau \in \mathcal{R}_{r}}  \mE[B^{ {\tau_{1}}}_{I_{\tau_{1}}} B^{ {\tau_{2}}}_{I_{\tau_{2}}}  ]\cdots \mE[ B^{ {\tau_{r-1}}}_{I_{\tau_{r-1}}}  B^{ {\tau_{r}}}_{I_{\tau_{r}}} ],  \quad& r \text{ is even}.
\\
0, & r \text{ is odd}.
\end{cases}
\end{eqnarray*}

\noindent \emph{(ii)} The following inequality holds true,
 \begin{eqnarray*}
\mE[B_{I_{1}}^{ {1}} \cdots B_{I_{r}}^{ {r}}] & \geq&0  .
\end{eqnarray*}
  \end{lemma}
  \begin{proof}  The first result  is   the Feynman diagram  formula for products of
  Gaussian random variables. The second result follows   from (i)  and   the fact that the increments of a fBm with Hurst parameter $H>1/2$ have positive correlation.
\end{proof}

  \smallskip

  We recall  an $L_{p}$-convergence result in Proposition 2.2    \cite{HLN}.
  \begin{prop}\label{prop5.4}
 Assume that $f $ and $g $ are stochastic processes which are H\"older continuous  of orders $\mu$ and $\lambda$ in $L_{p}$ (see Definition \ref{def5.1}) for any $p \geq 1$, respectively,  such that $\lambda +\mu >1$. Assume   that $f_{0} \in L_{p}$. Then, the  integral 
$ \int_0^T fdg$ exists  as a Riemann-Stieltjes integral of $L_p$-valued functions, and, as a consequence,  we have the 
  following convergence  in $L_p$:
  \begin{eqnarray*}
\lim_{n \rightarrow \infty} \sum_{k=0}^{n-1} f_{t_{k}} g_{t_{k},t_{k+1}}
= \int_{0}^{T} f dg\,, \quad \hbox{
where $t_{k}= kT/n$, $k=0,1,\dots, n$}\,.
\end{eqnarray*}
\end{prop}

We are ready to prove the  main result of this subsection. We will make use of the notation
\begin{eqnarray*}
J_{r} (\mathcal{A}) &:=& \int_{0}^{T}  \cdots \int_{0}^{T} \mathbf{1}_{\mathcal{A}}   d B^{ {1}}_{s_{1}} \cdots dB^{ {r}}_{s_{r}}
\end{eqnarray*}
 for any Borel subset $\mathcal{A} \subset [0,T]^{r}$ such that the above multiple integral exists as an iterated  Riemann-Stieltjes integral  defined using  $L_p$-convergence. For any $0\le s<t \le T$,  we define
 \[
 [s,t]^r_<=\{ (s_{1},\dots, s_{r}) \in [0,T]^r:  s \leq s_{1} \leq \cdots \leq s_{r} \leq t\}.
 \]

\begin{prop}\label{prop5.5}
Let   $ 0=t_{0} <t_{1}<\cdots < t_{n-1} < t_{n} =T$ be a partition of $[0,T]$.
Take
 \begin{equation}\label{e5.8i}
\mathcal{A}  =  \bigcup_{k=0}^{n-1} [t_{k},t_{k+1}]^{r}_{<}\,,
\end{equation}
and
 \begin{eqnarray*}
\mathcal{A}' = \bigcup_{k=0}^{n-1} [t_{k},t_{k+1}]^{r}.
\end{eqnarray*}
Then  $J_{r} (\mathcal{A})$ and $J_{r} (\mathcal{A}')$ are well defined  as iterated  Riemann-Stieltjes integrals, and we have
 \begin{eqnarray}\label{e5.10}
 \mE \left(\left|   J_{r} (\mathcal{A}) \right|^{2} \right)
 &\leq&
  \mE \left(\left|  J_{r} (\mathcal{A}' ) \right|^{2} \right) .
\end{eqnarray}
\end{prop}

  \begin{proof}
  We denote  $t^{l}_{k}=Tk/l$ and $I^{l}_{k} = [ t^{l}_{k},  t^{l}_{k+1}]$ for $0\leq k\leq l$. By Proposition \ref{prop5.4},  it is easy to see that
  \begin{eqnarray*}
 \mE \left(\left|   J_{r} (\mathcal{A}) \right|^{2} \right) &=&
\lim_{n_{1}\rightarrow \infty} \mE  \left(\left|  \sum_{k=0}^{n_{1}-1} B^{ {r}}_{I^{n_{1}}_{k}} \int_{0}^{T} \cdots \int_{0}^{T} \mathbf{1}_{\mathcal{A} }  (s_{1},\dots,s_{r-1}, t^{n_{1}}_{k} )   d B^{ {1}}_{s_{1}} \cdots dB^{ {r-1}}_{s_{r-1}}\right|^{2} \right)
\end{eqnarray*}
  Applying Proposition \ref{prop5.4} several times we obtain
  \begin{eqnarray}
 \lim_{n_{r}\rightarrow \infty}\cdots \lim_{n_{1}\rightarrow \infty} \mE  \left(\left|  \sum_{k_{r}=0}^{n_{r}-1} \cdots \sum_{k_{1}=0}^{n_{1}-1}  B^{ {r}}_{I^{n_{r}}_{k_{r}}} \cdots B^{ {1}}_{I^{n_{1}}_{k_{1}}}      \mathbf{1}_{\mathcal{A} }  ( t_{k_{1}}^{n_{1}},\dots,  t_{k_{r} }^{n_{r}} )   \right|^{2} \right)
 &=& \mE \left(\left|   J_{r} (\mathcal{A}) \right|^{2} \right).
 \label{e5.9}
\end{eqnarray}
It is clear that this identity still holds when we replace $\mathcal{A}$ by $\mathcal{A}'$.
On the other hand, since $\mathcal{A} \subset\mathcal{A}'$, it follows from Lemma \ref{lem5.2}(ii) that
\begin{eqnarray*}
&&
\mE  \left(\left|  \sum_{k_{r}=0}^{n_{r}-1} \cdots \sum_{k_{1}=0}^{n_{1}-1}  B^{ {r}}_{I^{n_{r}}_{k_{r}}} \cdots B^{ {1}}_{I^{n_{1}}_{k_{1}}}      \mathbf{1}_{\mathcal{A} }  ( t_{k_{1}}^{n_{1}},\dots,  t_{k_{r} }^{n_{r}} )    \right|^{2} \right)
\\
&&\leq
\mE  \left(\left|  \sum_{k_{r}=0}^{n_{r}-1} \cdots \sum_{k_{1}=0}^{n_{1}-1}  B^{ {r}}_{I^{n_{r}}_{k_{r}}} \cdots B^{ {1}}_{I^{n_{1}}_{k_{1}}}      \mathbf{1}_{\mathcal{A}' }  ( t_{k_{1}}^{n_{1}},\dots,  t_{k_{r} }^{n_{r}} )    \right|^{2} \right).
\end{eqnarray*}
By taking limits in both sides of the above inequality and taking into account
   \eref{e5.9} we obtain   inequality \eref{e5.10}.
  \end{proof}

\begin{remark}
The monotonicity property in Proposition \ref{prop5.5} can be generalized to any $\mathcal{A},\mathcal{A}' \subset [0,T]^{r} $ such that $  \mathcal{A} \subset \mathcal{A}' $ as long as the multiple integrals $J_{r}(\mathcal{A})$ and  $J_{r}(\mathcal{A}')$ are well defined as iterated   Riemann-Stieltjes integrals. The same generalization holds true for the   monotonicity property established in Proposition \ref{prop5.8} below.
\end{remark}

\begin{remark}\label{remark5.7}
In the same way as in the proof of Proposition \ref{prop5.5}, we can show that
\begin{eqnarray}
 \mE \left(   J_{r} (\mathcal{A}) \right)&=&
\lim_{n_{r}\rightarrow \infty}\cdots \lim_{n_{1}\rightarrow \infty} \mE  \left(   \sum_{k_{r}=0}^{n_{r}-1} \cdots \sum_{k_{1}=0}^{n_{1}-1}  B^{ {r}}_{I^{n_{r}}_{k_{r}}} \cdots B^{ {1}}_{I^{n_{1}}_{k_{1}}}      \mathbf{1}_{\mathcal{A} }   ( t_{k_{1}}^{n_{1}},\dots,  t_{k_{r} }^{n_{r}} )  \right).
\label{e5.11ii}
\end{eqnarray}
So the expectation of the  multiple integral $     J_{r} (\mathcal{A})  $ is always zero when the number of fBms in $\{B^1,\dots, B^r\}$ is odd.
\end{remark}

\begin{lemma}\label{lem5.7}
 Let $I_{i}$, $i=1,\dots, 2r$, be subintervals of $[0,T]$, where $r$ is an even number.    If $B^j$, $j=1,\dots, 2r$ are fBms, then the following identity holds true,
\begin{eqnarray*}
\mathrm{Cov} ( B^{1}_{I_{1}}\cdots B^{r}_{I_{r}},  B^{r+1}_{I_{r+1}}\cdots B^{2r}_{I_{2r}} )
&=& \sum_{\tau \in \mathcal{R}'_{2r}   }   \mE[B^{ {\tau_{1}}}_{I_{\tau_{1}}} B^{ {\tau_{2}}}_{I_{\tau_{2}}}  ]\cdots \mE[ B^{ {\tau_{2r-1}}}_{I_{\tau_{2r-1}}}  B^{ {\tau_{2r}}}_{I_{\tau_{2r}}} ],
\end{eqnarray*}
where $ \mathcal{R}'_{2r}   $ is a subset of $  \mathcal{R}_{2r}  $ such that   for  $\tau \in \mathcal{R}_{2r} \setminus  \mathcal{R}'_{2r}   $,  either $\tau_{i}, \tau_{i+1} \in \{1,\dots, r\}$ or $\tau_{i}, \tau_{i+1} \in \{r+1,\dots, 2r\}$, $i=1,3,\dots, 2r-1$. In particular, the covariance of $B^{1}_{I_{1}}\cdots B^{r}_{I_{r}}$ and $ B^{r+1}_{I_{r+1}}\cdots B^{2r}_{I_{2r}}$ is nonnegative.
\end{lemma}
\begin{proof}  Note that
\begin{eqnarray*}
\mathrm{Cov}( B^{1}_{I_{1}}\cdots B^{r}_{I_{r}},  B^{r+1}_{I_{r+1}}\cdots B^{2r}_{I_{2r}} )&=&\mE[B^{1}_{I_{1}}\cdots B^{2r}_{I_{2r}} ] - \mE[B^{1}_{I_{1}}\cdots B^{r}_{I_{r}} ] \mE[B^{r+1}_{I_{r+1}}\cdots B^{2r}_{I_{2r}} ] .
\end{eqnarray*}
  The lemma then follows immediately from Lemma \ref{lem5.2} (i). \end{proof}

Recall that we define the centered   multiple integral as
 \begin{eqnarray*}
\wt{J}_{r} ( \mathcal{A} ) &:=&  {J}_{r} ( \mathcal{A} )  - \mE \left[ {J}_{r} ( \mathcal{A} )   \right].
\end{eqnarray*}
Following is the monotonicity result on the $L_{2}$-norm of this multiple integral.
\begin{prop} \label{prop5.8}
Let $\mathcal{A}$ and $\mathcal{A}'$ be   as in Proposition \ref{prop5.5}. Then   we have
 \begin{eqnarray} \label{e5.13iiii}
 \mE \left(\left|   \wt{J}_{r} (\mathcal{A}) \right|^{2} \right)
 &\leq&
  \mE \left(\left|  \wt{J}_{r} (\mathcal{A}' ) \right|^{2} \right) .
\end{eqnarray}
\end{prop}
\begin{proof}  We first notice that
\begin{eqnarray*}
 \mE \left(\left|   \wt{J}_{r} (\mathcal{A}) \right|^{2} \right)  &=&   \mE \left(\left|   {J}_{r} (\mathcal{A}) \right|^{2} \right)  -  \mE \left(      {J}_{r} (\mathcal{A})  \right)^{2} .
\end{eqnarray*}
By applying \eref{e5.9} and \eref{e5.11ii} to the above equation, we have
\begin{eqnarray}
 \mE \left(\left|   \wt{J}_{r} (\mathcal{A}) \right|^{2} \right)
 &=&
\lim_{n_{r}\rightarrow \infty}\cdots \lim_{n_{1}\rightarrow \infty} \biggl\{
\mE  \left(  \left| \sum_{k_{r}=0}^{n_{r}-1} \cdots \sum_{k_{1}=0}^{n_{1}-1}  B^{ {r}}_{I^{n_{r}}_{k_{r}}} \cdots B^{ {1}}_{I^{n_{1}}_{k_{1}}}      \mathbf{1}_{\mathcal{A} }  ( t_{k_{1}},\dots,  t_{k_{r} } ) \right|^{2}   \right)
\nonumber
\\
&&  \quad\quad-
\mE  \left(   \sum_{k_{r}=0}^{n_{r}-1} \cdots \sum_{k_{1}=0}^{n_{1}-1}  B^{ {r}}_{I^{n_{r}}_{k_{r}}} \cdots B^{ {1}}_{I^{n_{1}}_{k_{1}}}      \mathbf{1}_{\mathcal{A} }  ( t_{k_{1}},\dots,  t_{k_{r} } )    \right)^{2
}
\biggr\}
\nonumber
\\
&=& \lim_{n_{r}\rightarrow \infty}\cdots \lim_{n_{1}\rightarrow \infty}  \sum_{k_{r}, k_{r}'=0}^{n_{r}-1} \cdots \sum_{k_{1},k_{1}'=0}^{n_{1}-1}
  \mathrm{Cov}[
 B^{ {r}}_{I^{n_{r}}_{k_{r}}} \cdots B^{ {1}}_{I^{n_{1}}_{k_{1}}}   , B^{ {r}}_{I^{n_{r}}_{k_{r}'}} \cdots B^{ {1}}_{I^{n_{1}}_{k_{1}'}}
 ]
 \nonumber
  \\
 && \quad\quad \times   \mathbf{1}_{\mathcal{A} }  ( t_{k_{1}},\dots,  t_{k_{r} } )   \mathbf{1}_{\mathcal{A} }  ( t_{k_{1}'},\dots,  t_{k_{r} '} ) .
     \label{e5.13iii}
\end{eqnarray}
Since $\mathcal{A}  \subset \mathcal{A}' $ and $\mathrm{Cov}[
 B^{ {r}}_{I^{n_{r}}_{k_{r}}} \cdots B^{ {1}}_{I^{n_{1}}_{k_{1}}}   , B^{ {r}}_{I^{n_{r}}_{k_{r}'}} \cdots B^{ {1}}_{I^{n_{1}}_{k_{1}'}}
 ]
\geq 0$ by Lemma \ref{lem5.7}, we have
\begin{eqnarray*}
 &&
 \mathrm{Cov}[
 B^{ {r}}_{I^{n_{r}}_{k_{r}}} \cdots B^{ {1}}_{I^{n_{1}}_{k_{1}}}   , B^{ {r}}_{I^{n_{r}}_{k_{r}'}} \cdots B^{ {1}}_{I^{n_{1}}_{k_{1}'}}
 ]
     \mathbf{1}_{\mathcal{A} }  ( t_{k_{1}},\dots,  t_{k_{r} } )   \mathbf{1}_{\mathcal{A} }  ( t_{k_{1}'},\dots,  t_{k_{r} '} )
   \\
   &  &\leq
      \mathrm{Cov}[
 B^{ {r}}_{I^{n_{r}}_{k_{r}}} \cdots B^{ {1}}_{I^{n_{1}}_{k_{1}}}   , B^{ {r}}_{I^{n_{r}}_{k_{r}'}} \cdots B^{ {1}}_{I^{n_{1}}_{k_{1}'}}
 ]
     \mathbf{1}_{\mathcal{A}' }  ( t_{k_{1}},\dots,  t_{k_{r} } )   \mathbf{1}_{\mathcal{A}' }  ( t_{k_{1}'},\dots,  t_{k_{r} '} ) .
\end{eqnarray*}
By summing  over $k_{1},k_{1}',\dots, k_{r},k_{r}'$ and taking   limits on both sides of the above inequality, and taking into account the identity \eref{e5.13iii}, we obtain the inequality \eref{e5.13iiii}. \end{proof}

\subsection{$L_{p}$-estimates of    multiple   integrals}\label{section5.4}
  In this subsection, we assume that $B^1$ is the identity function and   denote $H_1 =1$, and  $B^{j}$  is   a    fBm of Hurst parameter $H_j>1/2$, $j=2,\dots, m$.  We assume in addition that $B^2$, $\dots, $ $B^m$ are mutually independent.   For $\al = (\al_1, \dots, \al_r) \in \Gamma_r$ (the collection of multi-indices of length $r$ with elements in $\{1,\dots, m\}$),  and $\mathcal{A} \subset [0,T]^r$, we write
\begin{eqnarray*}
J_{r}^{\al} (\mathcal{A}) : = \int_{0}^{T} \cdots \int_{0}^{T} \mathbf{1}_{\mathcal{A} } (s_{1},\dots, s_{r})  d B^{\al_{1}}_{s_{1}} \cdots dB^{\al_{r}}_{s_{r}},
\end{eqnarray*}
provided that this multiple   integral exists, as an integrated Riemann-Stieltjes integral in $L_p$, for all $p\ge 1$.
Recall that   $D= \{t_{k} = kT/n, k=0,1,\dots, n\}$.
\begin{prop}\label{prop 2}
Denote   \emph{(}see \emph{\eref{e5.8i})}  $ \mathcal{A}_{k} = [t_{k},t_{k+1}]^{r}_{<}$, $k=0,1,\dots, n-1$. Take $s,t \in D$, $\al\in \Gamma_r$, and denote $r' = \#\{i:  { {\alpha_i}}\neq 1 \}$.
Then we have
 \begin{eqnarray}\label{e5.11i}
  \left\|
\sum_{k =  ns/T  }^{   nt/T -1  } J_{r}^{\al} ( \mathcal{A}_{k} )
\right\|_{ 2}
&\leq&
 C\nu_{n}
(\al)(t-s)^{1/2},
 \end{eqnarray}
 where $C$ is a constant that depends on $T$, $m$ and $\alpha$, and 
\begin{eqnarray}\label{e5.14i}
  \nu_{n}(\al)&=&
\begin{cases}
       n^{ 1  - H_\al     }
  &\qquad \hbox{if}\quad r' \text{  is even}\, ,\\
  n^{     H - H_\al      }
   &\qquad \hbox{if}\quad  r'  \text{  is odd}\, .\\
\end{cases}
\end{eqnarray}
Here  $H_\al := H_{\al_1} + \dots+H_{\al_r}$ and $H=\max\limits_{i:\al_i \neq 1} H_{\al_i}$.
\end{prop}
\begin{proof}
According to Proposition \ref{prop5.5}, it suffices to show that the $L_{p}$-estimate holds for $\mathcal{A}_{k} = [t_{k},t_{k+1}]^{r}$.
In this case,
we have
\begin{eqnarray}\label{e5.11}
J^{\al}_{r} (\mathcal{A}_{k}) &=& \prod_{i=1}^{r} B^{\al_{i}}_{t_{k},t_{k+1}}
=
  \prod_{j=1}^{m}( B^{j}_{t_{k},t_{k+1}}  )^{r_{j}},
 \end{eqnarray}
where $r_{j} =\#\{i: \al_{i} = j \}$, $j=1,\dots, m$ and $r_{1} +\cdots+r_{m} = r$.

\medskip

We first consider the case when $r_{1}=0$.
Denote by $\mu_{q}$   the $q$th moment of  a standard Gaussian random variable  $G \sim \mathcal{N} (0,1)$, that is, $\mu_{q} = (q-1)!!$ when $q$ is even and $\mu_{q} = 0$ when $q$ is odd.
It is well-known that we have the following Hermite decomposition:
\begin{eqnarray*}
x^{q} &=&  \sum_{p=0}^{q} \frac{q!}{(q-p)!} \mu_{q-p} H_{p}(x) , \quad x\in \RR,
\end{eqnarray*}
where
\begin{eqnarray*}
H_{q}(x) &=& \frac{(-1)^{q}}{q!} e^{\frac{x^{2}}{2}} \frac{d^{q}}{dx^{q}} \left( e^{-\frac{x^{2}}{2}}  \right).
\end{eqnarray*}
If
we denote $G^{j}_{k} = (\frac nT)^{H_j} B^{j}_{t_{k},t_{k+1}}$, then,  applying the above decomposition to \eref{e5.11},  we have
\begin{eqnarray}
J^{\al}_{r} (\mathcal{A}_{k})  &=& (\frac nT)^{-\sum_{j=2}^m r_jH_j} \prod_{j=2}^{m} ( G^{j}_{k})^{r_{j}}
\nonumber
 \\
 &=&  (\frac nT)^{-\sum_{j=2}^m r_jH_j} \prod_{j=2}^{m}   \sum_{p_j=0}^{r_{j}} \frac{r_{j}!}{(r_{j}-p_j)!} \mu_{r_{j}-p_j} H_{p_j}(G^{j}_{k}).
 \label{e5.13ii}
\end{eqnarray}
Therefore, for any $0\le k_ 1 \le k_2 \le n-1$,
\begin{eqnarray}
\left\| \sum_{k=k_{1}}^{k_{2}} J^{\al}_{r} (\mathcal{A}_{k}) \right\|_{2}^{2}
 &=& \left\| \sum_{k=k_{1}}^{k_{2}}  (\frac nT)^{-\sum_{j=2}^m r_jH_j} \prod_{j=2}^{m}   \sum_{p_{j}=0}^{r_{j}} \frac{r_{j}!}{(r_{j}-p_{j})!} \mu_{r_{j}-p_{j}} H_{p_{j}}(G^{j}_{k}) \right\|_{2}^{2}
 \nonumber
 \\
 &\leq& C n^{-2\sum_{j=2}^m r_jH_j} \sum_{p_{2}=0}^{r_{1}} \cdots \sum_{p_{m}=0}^{r_{m}}  \left\| \sum_{k=k_{1}}^{k_{2}}    \prod_{j=2}^{m}  \mu_{r_{j}-p_{j}}    H_{p_{j}}(G^{j}_{k}) \right\|_{2}^{2}  .
 \label{e5.13}
 \end{eqnarray}
 Expanding the right-hand side of the above inequality, we have
 \begin{eqnarray}
&&
    \left\| \sum_{k=k_{1}}^{k_{2}}    \prod_{j=2}^{m}  \mu_{r_{j}-p_{j}}    H_{p_{j}}(G^{j}_{k}) \right\|_{2}^{2}
\nonumber
\\
&&=   \sum_{k,k'=k_{1}}^{k_{2}}  \mE\left[      \prod_{j=2}^{m}  \mu_{r_{j}-p_{j}}    H_{p_{j}}(G^{j}_{k})      \prod_{j'=2}^{m} \mu_{r_{j'}-p_{j'}}     H_{p_{j'}}(G^{j'}_{k'}) \right]
\nonumber
\\
&&=
 \sum_{k,k'=k_{1}}^{k_{2}}   \prod_{j=2}^{m}  \mu_{r_{j}-p_{j}}^{2}   \mE\left[     H_{p_{j}}(G^{j}_{k})    H_{p_{j}}(G^{j}_{k'})   \right]
\nonumber
\\
&&=
 \left( \sum_{ | k-k'|\leq 2}  + 2 \sum_{k > k' +2 }   \right)  \prod_{j=2}^{m}  \mu_{r_{j}-p_{j}}^{2}   \mE\left[     H_{p_{j}}(G^{j}_{k})    H_{p_{j}}(G^{j}_{k'})   \right]
 \nonumber
\\
&&: = E_{1} +E_{2}.
\label{e5.15ii}
\end{eqnarray}
We take $k_{1} =   \frac{ns}{T}  $ and $k_{2} =   \frac{nt}{T}   -1$. Since
\begin{eqnarray}\label{e5.17}
 \mE\left[     H_{p_{j}}(G^{j}_{k})    H_{p_{j}}(G^{j}_{k'})   \right] &=& \left( \mE[ G^{j}_{k}  G^{j}_{k'} ]\right)^{p_{j}}
 \nonumber
 \\
 &=&  \left( \al_{H_j}\int_{k}^{k+1} \int_{k'}^{k'+1} |u-v|^{2H_j-2} dudv\right)^{p_{j}},
\end{eqnarray}
where $\al_{H_j} =H_j(2H_j-1)$, it is easy to see that
\begin{eqnarray}\label{e5.13i}
E_{1} \leq C n (t-s).
\end{eqnarray}
 On the other hand, from \eref{e5.17} we have
 \begin{eqnarray*}
\mE\left[     H_{p_{j}}(G^{j}_{k})    H_{p_{j}}(G^{j}_{k'})   \right] &\leq& C (k-k')^{(2H_j-2)p_j}, \quad k,k': |k-k'|\geq2,
\end{eqnarray*}
and thus
 \begin{eqnarray}
E_{2}&\leq&
   C \sum_{k > k' +2 }     \prod_{j=2}^{m}  \mu_{r_{j}-p_{j}}^{2}
    (k-k')^{ (2H_j-2)   p_{j}}
  \nonumber
    \\
    &=&
      C    \sum_{k > k' +2 }
    (k-k')^{  \sum_{j=2}^{m}(2H_j-2)  p_{j}}  \prod_{j=2}^{m}  \mu_{r_{j}-p_{j}}^{2}  .
    \label{e5.14}
\end{eqnarray}
Since $2H_j-2 <0$, the quantity $  (k-k')^{ (2H_j-2) \sum_{j=2}^{m}  p_{j}}   $ reaches maximum when $p_{2}=\cdots=p_{m}=0$. So
\begin{equation}  \label{eq3}
E_{2 } \leq  C    \sum_{k > k' +2 }     1  \le Cn^{2 } (t-s)^{2}.
\end{equation}
 Substituting (\ref{e5.13i}) and (\ref{eq3}) into  to \eref{e5.15ii} and taking into account \eref{e5.13} and the identity
 \begin{eqnarray*}
\sum_{j=2}^m r_jH_j = H_{\al_1} +\dots+ H_{\al_r},
\end{eqnarray*}
 we obtain   inequality \eref{e5.11i} for the case when $r$ is even.

We turn to the case when  $r_1=0$ and $r=r'$ is odd.
Note that
\begin{eqnarray*}
 \prod_{j=2}^{m}  \mu_{r_{j}-p_{j}}   &\neq&0
\end{eqnarray*}
only if   $r_{2}-p_{2},\dots, r_{m}-p_{m}$ are all even, so for $p_{j}$, $j=2,\dots, m$ such that $E_{2} \neq 0$,
\begin{eqnarray*}
 \sum_{j=2}^{m}r_{j} - \sum_{j=2}^{m}p_{j} = r- \sum_{j=2}^{m}p_{j}
\end{eqnarray*}
must be even, and so $\sum_{j=2}^{m}p_{j}$ must be odd. Therefore,  for \eref{e5.14} we have
 \begin{eqnarray}
 E_{2} &\leq& C     \sum_{k > k' +2 }
    (k-k')^{ (2H-2) }
   \nonumber
    \\
    &\leq& Cn^{2H } (t-s) .
    \label{e5.15}
\end{eqnarray}
 Substituting  the estimates  \eref{e5.13i} and \eref{e5.15} into \eref{e5.15ii} and taking into account \eref{e5.13}, we obtain
  the estimate \eref{e5.11i}    when $r$ is odd.

In the case $r_{1}>0$, it follows from the identity
  \eref{e5.11}   that
\begin{eqnarray*}
  \left\|
\sum_{k =  ns/T  }^{   nt/T-1   } J^{\al}_{r} ( \mathcal{A}_{k} )
\right\|_{ 2}
&\leq&
\left(\frac{T}{n}\right)^{ r_{1}  }  \left\|
\sum_{k =  ns/T  }^{   nt/T -1  }    \prod_{j =2 }^{m} ( B^{j}_{t_{k},t_{k+1}}  )^{r_{j}}
\right\|_{ 2}
 .
\end{eqnarray*}
Now we can apply the  inequality \eref{e5.11i} for the case $r_{1}=0$ to the right-hand side of the above equation to obtain the inequality \eref{e5.11i} in   the general case.
\end{proof}

\begin{remark}\label{remark5.12}
By the monotonicity property in Proposition \ref{prop5.5} we can also show that the rate
$1 -  H_\al$   or $H -  H_\al $   obtained in Proposition \ref{prop 2} is
optimal; that is, there exists a constant $C$ such that the right-hand side
of inequality \eref{e5.11i} is   the lower bound for the quantity $ \left\|
\sum_{k =  ns/T  }^{   nt/T-1   } J_{r}^{\al} ( \mathcal{A}_{k} )
\right\|_{ 2}$ with  $ {\mathcal{A}}_{k}=[t_{k},t_{k+1}]^{r}_{<}  $\,.  This follows by taking
 $ \wt{\mathcal{A}}_{k}$ of the  following  form:
 \begin{eqnarray*}
 \wt{\mathcal{A}}_{k} :=  [t_{k}+\frac{h}{2^{r}}, t_{k}+\frac{h}{2^{r-1}}] \times\cdots [t_{k}+\frac{h}{4}, t_{k}+\frac{h}{2}] \times [t_{k}+\frac{h}{2},t_{k}+h],
\end{eqnarray*}
where $h=\frac{T}{n}$ and $k=0,1,\dots, n-1$. Since $\wt{\mathcal{A}}_k \subset \mathcal{A}_k$, by the monotonicity property, it suffices to find a  the lower bound   for the quantity
 \begin{eqnarray*}
 \left\|
\sum_{k =  ns/T  }^{   nt/T -1  } J_{r}^{\al} ( \wt{\mathcal{A}}_{k} )
\right\|_{ 2}\,,
\end{eqnarray*}
 which  can be done  in a  similar way as   in the proof of Proposition \ref{prop 2}.
 \end{remark}

\medskip

\begin{remark}\label{remark5.12i}
Denote $r_j = \# \{i: \al_i =j\}$, $j=1,\dots, m$. From Remark \ref{remark5.7} we see that $\EE[J^\al_r (\mathcal{A})]=0$ if $(r-r_1)$ is odd.  In fact, in a similar way, we can show that $\EE[J^\al_r (\mathcal{A})] \neq 0$ iff $r_j$, $j=2,\dots, m$ are all even numbers.
\end{remark}
We turn to the $L_{2}$-estimate of
centered multiple  integral
$\wt{J}^{\al}_{r} (\mathcal{A}):= {J}^{\al}_{r} (\mathcal{A})-\mE[{J}^{\al}_{r} (\mathcal{A})]$.
We shall prove that the rate in \eref{e5.11i} will be improved and this is the basis
for the introduction of the modified Taylor scheme. Recall that $H=\max\limits_{i:\al_i \neq 1} H_{\al_i}$ and $H_\al = H_{\al_1}+\dots+H_{\al_r}$.
\begin{prop}\label{prop5.10}
 Let  $ \mathcal{A}_{k}  $, $k=0,1,\dots, n-1$ be   as in Proposition \ref{prop 2}. Take $s,t \in D$ and $\al \in \Gamma_r $, $r \geq 2$. Denote $r_j = \# \{i: \al_i =j\}$, $j=1,\dots, m$.  If $r_j$, $j=2,\dots, m$ are even,
then we have
 \begin{eqnarray}\label{eq7.1}
  \left\| \sum_{k  =  ns/T   }^{  nt/T -1}
     \wt{J}^{\al}_{r} ( \mathcal{A}_{k} ) \right\|_{p}
&\leq&
 C(t-s)^{1/2} \omega_{n}(\al)
 \end{eqnarray}
 where
 \begin{eqnarray*}
\omega_{n}(\al) &=& \begin{cases}
        n^{  1/2-H_\al  }
  &\qquad \hbox{if}\quad    { \frac12< H < \frac34 }\, ,\\
      n^{  1/2-H_\al   } \sqrt{\log  n}
  &\qquad \hbox{if}\quad    {  H = \frac34 }\, ,\\
   n^{        2H-1-H_\al   }
   &\qquad \hbox{if}\quad      { \frac34  < H   < 1   }\, .\\
\end{cases}
\end{eqnarray*}

\end{prop}

\begin{proof}
According to Proposition \ref{prop5.8}, it suffices to consider the case when $\mathcal{A}_{k} = [t_{k}, t_{k+1}]^{r}$, $k=0,1,\dots, n-1$.  We first assume that $r_{1} = 0$. By \eref{e5.13ii}, we have
\begin{eqnarray*}
\mE[{J}^{\al}_{r} (\mathcal{A}_{k})  ] &=&  (\frac nT)^{-\sum_{j=2}^m r_jH_j} \prod_{j=2}^{m}     \mu_{r_{j} }.
\end{eqnarray*}
So by denoting $\mathcal{P} = \{ (p_{2},\dots, p_{m}): p_{j}=0,1,\dots, r_{j},\,j=2,\dots, m \}$ and ${\bf{0}} = (0,\dots, 0)$ we can write
\begin{eqnarray*}
\wt{J}^{\al}_{r} (\mathcal{A}_{k})  &=&
(\frac nT)^{-\sum_{j=2}^m r_jH_j} \sum_{(p_{2},\dots, p_{m}) \in \mathcal{P}\setminus {\bf{0}} }\,  \prod_{j=2}^{m}    \frac{r_{j}!}{(r_{j}-p_{j})!} \mu_{r_{j}-p_{j}} H_{p_{j}}(G^{j}_{k})   .
\end{eqnarray*}
 As in \eref{e5.15ii}, we can write
 \begin{eqnarray}
\left\| \sum_{k=k_{1}}^{k_{2}} \wt{J}^{\al}_{r} (\mathcal{A}_{k})  \right\|_{2}^{2}
&\leq&
C n^{-2\sum_{j=2}^m r_jH_j} \sum_{(p_{2},\dots, p_{m}) \in \mathcal{P}\setminus {\bf{0}} }  \left\|  \sum_{k=k_{1}}^{k_{2}} \prod_{j=2}^{m}     \mu_{r_{j}-p_{j}} H_{p_{j}}(G^{j}_{k})    \right\|_{2}^{2}
\nonumber
\\
&\leq&
C n^{-2\sum_{j=2}^m r_jH_j} \sum_{(p_{2},\dots, p_{m}) \in \mathcal{P}\setminus {\bf{0}} } (E_{1} +E_{2}).
\label{e5.19}
\end{eqnarray}

  Since $r$ is even,  by the same argument as in the   proof of the  Proposition  \ref{prop 2}, we see that for   $p_{j}$, $j=2,\dots, m$ such that $E_{2} \neq 0$, $\sum_{j=2}^{m}p_{j}$ must be even.  So, for $(p_{2},\dots, p_{m}) \in \mathcal{P}\setminus {\bf{0}} $, we have $\sum_{j=2}^{m}p_{j} \geq 2$. This implies
\begin{eqnarray*}
E_{2 } &\leq &   C    \sum_{k > k' +2 }
    (k-k')^{ 2(2H-2)  }
  \leq
    \begin{cases}
     C  n (t-s)
  &\qquad \hbox{if}\quad    { \frac12< H < \frac34 }\, ,\\
     C n  {\log n}  (t-s)
  &\qquad \hbox{if}\quad    { H = \frac34 }\, ,\\
   C n^{        4H-2      }  (t-s)
   &\qquad \hbox{if}\quad      { \frac34  < H   < 1   }\, .\\
    \end{cases}
\end{eqnarray*}
Applying \eref{e5.13i} and  the above estimate to  \eref{e5.19}, we obtain the upper bound estimate in \eref{eq7.1}.

Finally,  the estimate \eref{eq7.1} for the case $r_{1} >0$ follows immediately from the estimate in  the case $r_{1}=0$.
\hfill$\Box$
\begin{remark}
As in Remark \ref{remark5.12}, we can show that the upper bound in Proposition \ref{prop5.10} is optimal.
\end{remark}
\begin{remark}
In terms of $\nu_{n}$ defined in \eref{e5.14i}, inequality \eref{eq7.1} becomes
 \begin{eqnarray*}
  \left\| \sum_{k  =  ns/T   }^{  nt/T -1 }
     \wt{J}^{\al}_{r} ( \mathcal{A}_{k} ) \right\|_{p}
&\leq&
 C(t-s)^{1/2}\nu_{n}(\al)\sigma_{n} ,
 \end{eqnarray*}
 where
 \begin{eqnarray}\label{e5.25}
\sigma_{n}&=&
\begin{cases}
        n^{  -1/2     }
  &\qquad \hbox{if}\quad    { \frac12< H < \frac34 }\, ,\\
      n^{  -1/2    } \sqrt{\log n}
  &\qquad \hbox{if}\quad    {  H = \frac34 }\, ,\\
   n^{        2H-2    }
   &\qquad \hbox{if}\quad      { \frac34  < H   < 1   }\, .
\end{cases}
\end{eqnarray}
\end{remark}

\begin{remark}\label{remark5.15}
For     $t\in [0, T]$ and $\gamma \in \Gamma_r$, we define the function 
\[
D_{\gamma}(t):=  \mE\left[B^{\gamma}_{0,t}\right] = \mE[J^{\gamma}_r ([0,t]^r_{<})].
\]
From Remark \ref{remark5.12i}, $D_\gamma (t) = 0$ if some $r_j = \# \{i: \gamma_i=j\}$, $j=2,\dots, m$ is odd. In the following we   derive an explicit formula for $D_\gamma (t)$ when all $r_j$, $j=2,\dots, m$ are even. 

Recall that when $r$ is an even number,  the set $\mathcal{R}_{r}$  is defined in Section \ref{section5.3}. When $r$ is an odd number, we    define $ \mathcal{R}_{r}$ to be the collection of ways to arrange  $(1,\dots, r)$ into $(\frac{r-1}{2})$ pairs and one element and we write  $\tau = \{ (\tau_{1},\tau_{2}),\dots, (\tau_{r-2}, \tau_{r-1}), \tau_{r} \}$, where $\tau_{i}$, $i=1,\dots, r $, are   elements in $\{ 1,\dots, r \}$.

 For $r\in \NN$,  we denote by $  {\mathcal{R}}({\gamma})  $ the subset of $ {\mathcal{R}}_{r}$ such that for $\tau \in   {\mathcal{R}}({\gamma} ) $ we have $\gamma_{\tau_{i}}=\gamma_{\tau_{i+1}}$ for $i=1,3,5,\dots$ and $i<r$, and when $r$ is odd  it satisfies an additional condition that $ \tau_{r} =1$.
We denote $\tau^{*} = \{ i=1,3,5,\dots: \tau_{i} \neq 1, i<r \}$.

We denote by $\dot{B}^{j}_{t}$, $t\in [0,T]$ the fractional white noise associated with the fBm $B^{j}$ (see \cite{HO}), $j=1,\dots, m$. Since 
\begin{eqnarray*}
\mE[\dot{B}^i_t \dot{B}^j_s ] =\al_{H_j} |t-s|^{2H_j-2}  \delta_{i,j},
\end{eqnarray*}
where  $\al_{H_j}= H_j(2H_j-1)$,
 we have
\begin{eqnarray*}
 \mE\left[B^{\gamma}_{0,t}\right] &=& \mE\left[\int_{0}^{t}\cdots\int_{0}^{t_{2}} dB^{\gamma_{1}}_{t_{1}}\cdots dB^{\gamma_{r}}_{t_{r}}\right]
 \\
 &=& \mE\left[\int_{0}^{t}\cdots\int_{0}^{t_{2}}  \dot{B}^{\gamma_{1}}_{t_{1}} \cdots \dot{B}^{\gamma_{r}}_{t_{r}} d {t_{1}}\cdots d{t_{r}}\right]
 \\
 &=& \sum_{\tau \in \mathcal{R}(\gamma)}    \int_{0}^{t} \cdots \int_{0}^{t_{2}}  \prod_{\substack{i\in \tau^{*}
  }}  \al_{H_{\tau_i} }  |t_{\tau_{i}} -t_{\tau_{i+1}}|^{2H-2}
 dt_{1} \cdots dt_{r}.
\end{eqnarray*}
\end{remark}
\section{$L_p$-rates for incomplete  and modified Taylor schemes}
In this section, we consider the numerical approximation of the solution for
the SDE
\begin{eqnarray}\label{e6.1i}
dy_{t} &=& V(y_{t}) dB_{t}, \quad y_{0}\in \RR^{d},\quad t\in [0,T],
\end{eqnarray}
 where  $B=(B^{1},\dots, B^{m})$ and $B^{j}$ is   a standard fractional Brownian motion (fBm) with Hurst parameter $H_j>1/2$ for  $j=2,\dots, m$ and  $B^{1}  $ is the identity function. Assume in addition that $B^2,\dots, B^m$   are mutually independent.

\subsection{The incomplete Taylor scheme for SDE driven by fBm}\label{section6}
 
  In this subsection, we consider the incomplete Taylor scheme \eref{e3.21}   of the following form
\begin{eqnarray}\label{e6.1}
y^{n}_{t}  &=&
y^n_{t_k}+ \wt{\mathcal{E}}^{(N)}_{ t_k,   t } ( y^n_{t_k   }   ), \quad t \in [t_{k}, t_{k+1}], \quad k=0,1,\dots, n-1, \quad  y^n_0=y_0,\nonumber \\
\wt{\mathcal{E}}^{(N)}_{s,t} (y) &=&  \sum_{ \gamma \in \wt{\Gamma}  }
    {\mathcal{V}}_{\gamma   } I(y)     B^{ \gamma }_{s,t}\,.
\end{eqnarray}
where    $\wt{\Gamma} $  is a  finite subset of $\Gamma$. Recall that  $\Gamma=\cup_{r=1} ^\infty \Gamma_r$, and $\Gamma_r$ is the collection of multi-indices of length $r$ with elements in $\{1,\dots, m\}$. We denote   $N = \max_{\gamma \in \wt{\Gamma} } |\gamma|$. 

 Take $\al \in \Gamma $. Denote $r'(\al) = \#\{i: \al_i \neq 1\}$ and 
 \[
  \vartheta(\al)=
   \begin{cases}
   1&\hbox{when  $r' (\al)$ is even } \\
   \max\limits_{i:\al_i\neq 1 } H_{\al_i} &\hbox{when  $r' (\al)$ is odd }
   \end{cases}
.   \]
 We define
\begin{equation}\label{e.def-theta-lp}
\rho=\rho_{\wt{\Gamma}}  =\min\left\{H_\al -\vartheta(\al) \,, \ \al\in \Gamma\setminus \wt{\Gamma}\
\right\}\,,
\end{equation}
where recall that $H_\al = H_{\al_1}+\dots+H_{\al_r}$ for $\al \in \Gamma_r$.

  We first derive two auxiliary results. We take   $\beta  $ such that $1/2<\beta < \min_j H_j$.
   \begin{lemma}\label{lem6.1}
 Let $ R^{e}_{s,t}$, $e=1,3,4$ be defined by  \eref{eqn5.15},
 \eref{eqn 5.12} and \eref{eqn5.14i}. Assume that $\wt{\Gamma}$ has the hierarchical structure introduced in Definition \ref{hi}.  Assume that $V \in C^{N+2}_{b}$.
 Then   the following  estimate    holds for any $i>j$
\begin{eqnarray}\label{eq6.2}
 \left\| \sum_{k=j}^{i-1} R^{e}_{t_{k}, t_{k+1}} (y^{n}_{t_{k}}) \right\|_{p} &\leq& C n^{-\rho } \left( \frac{i-j}{n} \right)^{1/2} \, \quad \hbox{   $e=1, 3, 4$}\,.
\end{eqnarray}
 \end{lemma}
\begin{proof}
According to Proposition \ref{prop4.7},    $R^{e}_{t_{k}, t_{k+1}} (y^{n}_{t_{k}})$, $e=1,3,4$, are  the summations of  quantities of the form $U(y^{n}_{t_{k}}) B^{\al}_{t_{k},t_{k+1}}$ for $\al \in \Gamma\setminus \wt{\Gamma} $.
Since $V\in C^{N+2}_{b}$, it is easy to see from Proposition \ref{prop4.7} that $U \in C^{1}_{b}$. To prove the lemma, it suffices to show that the $L_{p}$-estimate \eref{eq6.2} holds true for
\begin{eqnarray}\label{eq6.3}
\sum_{k=j}^{i-1} U(y^{n}_{t_{k}}) B^{\al}_{t_{k},t_{k+1}}, \quad \al \in \Gamma\setminus  \wt{\Gamma} \,.
\end{eqnarray}
By Proposition \ref{prop 2} and the definition of $\rho $ 
we have
\begin{eqnarray*}
\left\|	\sum_{k=j}^{i-1}   B^{\al}_{t_{k},t_{k+1}} \right\|_{p}&\leq& Cn^{-\rho } \left( \frac{i-j}{n} \right)^{1/2}\,.
\end{eqnarray*}
On the other hand, Lemma \ref{lem 6.4} implies that $\|y^{n}\|_{\beta, p} \leq C$. So
we can apply   Proposition \ref{lem 2.4} to the quantity \eref{eq6.3} to obtain the   estimate
\begin{eqnarray*}
\left\| \sum_{k=j}^{i-1} U(y^{n}_{t_{k}}) B^{\al}_{t_{k},t_{k+1}} \right\|_{p} &\leq& C n^{-\rho } \left( \frac{i-j}{n} \right)^{1/2} .
\end{eqnarray*}
This completes the proof.
\end{proof}

   \begin{lemma}\label{lem6.2}
   Let $f$ be a stochastic process on $[0,T]$, such that $\mE(\|f\|_{\beta}^{p})$ and $\mE(\|f\|_{\infty}^{p})$ are finite for  all $p \geq 1$. Let $\wt{\Gamma}$, $R^e_{s,t}(y)$, $e=1,3,4$ and $V  $ be as in Lemma \ref{lem6.1} and  let $R^e_{s,t} (y)$ be defined in \eref{eqn5.15i}. Then the following estimate is true for $e=1,2,3,4$ and $l=0,1,\dots, n-1$,
\begin{eqnarray}\label{eq6.4}
\left\| \sum_{k=0}^{l} \int_{t_{k}}^{t_{k+1}} f_{u} d R^{e}_{t_{k}, u } (y^{n}_{t_{k}})  \right\|_{p} &\leq& K n^{-\rho },
\end{eqnarray}
where $K$ depends on  $\mE(\|f\|_{\beta}^{p})$,  $\mE(\|f\|_{\infty}^{p})$  and the vector fields $V_j$.
 \end{lemma}

 \noindent \textit{Proof:} \quad
 According to the estimate \eref{eq5.3i} in Lemma \ref{lem5.3} and taking into account the assumption that $\mE(\|f\|_{\beta}^{p})$ and $\mE(\|f\|_{\infty}^{p})$ are finite, we have
 \begin{eqnarray*}
 \left\| \int_{t_{k}}^{t_{k+1}} f_{u} d R^{2}_{t_{k}, u } (y^{n}_{t_{k}})  \right\|_{p} &\leq& K n^{-(N+2)\beta},
\end{eqnarray*}
 where we  recall that $N= \max_{\gamma\in \wt{\Gamma}} |\gamma|$.  Therefore,
\begin{eqnarray*}
 \left\| \sum_{k=0}^{l} \int_{t_{k}}^{t_{k+1}} f_{u} d R^{2}_{t_{k}, u } (y^{n}_{t_{k}})  \right\|_{p} &\leq& \sum_{k=0}^{n-1}  \left\| \int_{t_{k}}^{t_{k+1}} f_{u} d R^{2}_{t_{k}, u } (y^{n}_{t_{k}})  \right\|_{p}
 \\
 &\leq& K n^{1-(N+2)\beta}
 \\
 &\leq&K n^{-\rho },
\end{eqnarray*}
where the last inequality   follows by taking $\beta$ sufficiently close to $\min_jH_j$ and the fact that we can find $\al \in \Gamma_{N+1} $ such that $H_\al - \vartheta (\al) < (N+2)\beta-1$.

 We turn to the case   $e=1,3,4$. We write
 \begin{eqnarray*}
 \int_{t_{k}}^{t_{k+1}} f_{u} d R^{e}_{t_{k}, u } (y^{n}_{t_{k}})&=&   \int_{t_{k}}^{t_{k+1}} \int_{t_{k}}^{u } df_{v} d R^{e}_{t_{k}, u } (y^{n}_{t_{k}})+ f_{t_{k}}     R^{e}_{t_{k}, t_{k+1} } (y^{n}_{t_{k}})
\\
&=:& R^{e,1}_{k}+R^{e,2}_{k}.
\end{eqnarray*}
Applying Proposition \ref{lem 2.4} to $\sum_{k=0}^{l} R^{e,2}_{k}$ and taking into account the estimate in Lemma \ref{lem6.1} and the assumption that $\|f\|_{\beta, p}$  is finite, we obtain the inequality
\begin{eqnarray}\label{eq6.6}
\left\| \sum_{k=0}^{l} R^{e,2}_{k} \right\|_{p} &\leq& K n^{-\rho }.
\end{eqnarray}
According to Proposition \ref{prop4.7}, the quantity $R^{e,1}_{k}$ is a linear combination of the terms
\begin{eqnarray*}
 \int_{t_{k}}^{t_{k+1}} \int_{t_{k}}^{u} df_{v} d B^{\al}_{t_{k}, u} , \quad \al \notin \wt{\Gamma}.
\end{eqnarray*}
Take $\beta_{j}<H_j$ for $j=1,\dots, m$.   Then by Lemma \ref{lem 7.2} we have
\[
\left\|  \int_{t_{k}}^{t_{k+1}} \int_{t_{k}}^{u} df_{v} d B^{\al}_{t_{k}, u} \right\|_{p} \le   K n^{-\beta_{\al_{1}}-\cdots-\beta_{\al_{p}}-\beta }
\le  K n^{-\rho -1},
\]
 where the last inequality follows by taking $\beta_{j}$, $j=1,\dots, m$ sufficiently close to $H_j$ and $\beta$ to $\min_j H_j$.
Therefore,
\begin{equation}
\left\| \sum_{k=0}^{l} R^{e,1}_{k} \right\|_{p} \leq \sum_{k=0}^{n-1} \left\|  R^{e,1}_{k} \right\|_{p}
\le \sum_{k=0}^{n-1}  Kn^{-\rho -1}
=K n^{-\rho }.
\label{eq6.7}
\end{equation}
Combining  \eref{eq6.6} and \eref{eq6.7}, we obtain the inequality \eref{eq6.4}   for $e=1,3,4$.
 \end{proof}

 \begin{theorem}\label{thm6.3}
 Let    $\wt{\Gamma}$ be a finite subset of $\Gamma$. Assume that $\wt{\Gamma}$ satisfies the hierarchical structure defined in Definition \ref{hi}.
 Let $y$   be the solution of equation \eref{e6.1i}   and   $y^n$ be the solution of numerical equation   \eref{e6.1}
with $  \wt{\mathcal{E}}^{(N)}_{s,t}$   defined in \eref{e6.1}. Take $M>0$. Assume that $V \in C^{N+2}_{b}$. Then
 \begin{eqnarray}\label{e6.9ii}
\sup_{t\in [0,T]}\left( \mE|\mathbf{1}_{\{\|B\|_{\beta} <M\} }( y_{t}-y^{n}_{t}) |^{p}\right)^{1/p} &\leq& C_M n^{-\rho },
\end{eqnarray}
where $C_M$ is a constant depending on   $M$.
 \end{theorem}
 \begin{proof}   By Theorem \ref{cor 5.9}, we have
 \begin{eqnarray*} 
 &&
 \mathbf{1}_{\{\|B\|_{\beta} <M\} }( y_{t}-y^{n}_{t})
 \nonumber
 \\
 &&
 = \sum_{e=1}^4  \left[ \mathbf{1}_{\{\|B\|_{\beta} <M\} } {\Phi}_t \right] \sum_{k=0}^{\lfloor \frac{nt}{T}  \rfloor}
 \int_{t_k}^{t_{k+1} \wedge t}
  \left[ \mathbf{1}_{\{\|B\|_{\beta} <M\} }  {\Psi}_s\right]
   d     R^e_{t_k, s} (y^n_{t_k})\,,
\end{eqnarray*}
where $\Phi$ and $\Psi$ are solutions of equations \eref{eqn5.24} and \eref{eqn5.23}.
According to the estimate of $ \|\Phi\|_{\infty}$ in Lemma \ref{lem 6.4}, the $L_{p}$-norm of the quantity $\mathbf{1}_{\{\|B\|_{\beta} <M\} } {\Phi}_t$ is less than a constant $C_M$  is independent of $n$. So to prove the theorem, it suffices to show that the $L_{p}$-norm of
\begin{eqnarray}\label{eq6.8}
 \sum_{k=0}^{\lfloor \frac{nt}{T}  \rfloor}
 \int_{t_k}^{t_{k+1} \wedge t}
  \left[ \mathbf{1}_{\{\|B\|_{\beta} <M\} }  {\Psi}_s\right]
   d     R^e_{t_k, s} (y^n_{t_k})
\end{eqnarray}
is less than $C_Mn^{-\rho }$.  We take $f_{s}= \mathbf{1}_{\{\|B\|_{\beta} <M\} }  {\Psi}_s$, then it follows again from Lemma \ref{lem 6.4}  that $\mE[\|f\|_{\beta}^{p}]$ and $\mE[\|f\|_{\infty}^{p}]$ are bounded by a constant independent of $n$. So   applying   Lemma \ref{lem6.2} to \eref{eq6.8} we obtain the upper bound $C_Mn^{-\rho }$. This completes the proof.
 \end{proof}
 \begin{remark}\label{remark6.4}
 With more careful estimates in Lemmas \ref{lem6.1} and
 \ref{lem6.2} and with the help of Remark \ref{remark5.12}, we can   show that    the convergence rate of the incomplete Taylor scheme obtained in Theorem \ref{thm6.3} is optimal, that is, we can find a constant $C$ such that the left-hand side of \eref{e6.9ii} is greater than its right-hand side.
 \end{remark}

 The following result follows immediately from Theorem \ref{thm6.3}.
\begin{cor}\label{cor6.4}
Let the assumption be   as in Theorem \ref{thm6.3}.
  The scaled error  $n^{\rho }(y_{t}-y^{n}_{t})$,  $t\in [0,T]$ of the numerical scheme   is bounded in probability (or tight), that is, for every $\varepsilon>0$, there exists   $C>0$ such that
\begin{eqnarray*}
P(n^{\rho }|y_{t}-y^{n}_{t}| >C ) \leq \varepsilon \quad \text{for all } n.
\end{eqnarray*}
\end{cor}

To obtain the best choice of $\wt{\Ga}$  by Theorem \ref{thm6.3}, we will follow the same ideas as in  \eref{e.theta-form}
and \eref{e.gamma-theta}. 
Take  nonnegative integers  $r_1,\dots, r_m$ and denote $r' = \sum_{j=2}^m r_j$.
 First, we see that a possible $L_p$-rate has the form
\begin{equation}
\rho= \begin{cases}   \sum_{j=1}^m r_jH_j -1  &\qquad if ~ r'  ~\text{is even}   \\
   \sum_{j=1}^m r_jH_j  - \max\limits_{j>1: \,r_j>0} H_j &\qquad if ~  r'  ~ \text{is odd}\,.
  \end{cases}
  \label{e.rho-form}
\end{equation}

Given a $\rho$ of the above form we define
\begin{eqnarray}
      \widehat\Gamma(\rho) &=& \left\{ \al \in \Gamma:  H_\al -1 < \rho , ~r'(\al) \text{ is even}
      \right\}
      \nonumber
      \\
      &&\bigcup  \left\{ \al \in \Gamma:  H_\al - \max\limits_{j: \al_j\neq 1}H_{\al_j} < \rho , ~r'(\al) \text{ is odd} \right\} \,.
      \label{e.gamma-rho}
 \end{eqnarray}
\begin{lemma}Given a $\rho$  of the form \eref{e.rho-form}, we define $\widehat\Gamma(\rho)$ by \eref{e.gamma-rho}.  Then
\begin{equation*}
\displaystyle \rho_{\,_{\widehat\Gamma(\rho)}}=\rho\,.
 \end{equation*}
\end{lemma}
\begin{proof} The proof of this lemma is similar to that of Lemma \ref{l.4.6}.
\end{proof}

\begin{remark} From this  lemma and Theorem \ref{thm6.3}, we see that all possible rates
for the $L_p$-convergence has the form \eref{e.rho-form}.  Given a rate of the form
\eref{e.rho-form} the best choice of  $\wt{\Ga}$  in \eref{e6.1}  for the $L_p$-convergence is
\eref{e.gamma-rho}.
\end{remark}

{ 
\begin{remark}\label{remark6.5}
We compare Theorem \ref{thm6.3} to the strong convergence results  in \cite{Hu,KP}.
We consider the $d$-dimensional Stratonovich SDE:
\begin{eqnarray*}
y_{t} =y_{0} +\int_{0}^{t} V(y_{s}) dW_{s},
\end{eqnarray*}
where $W=(W^{1},\dots, W^{m})$, $W^{j}$ is   a standard    Brownian motion for $j=2,\dots, m$,  $W^{1}_{t}\equiv t$ and $W^{j}$, $j=2,\dots, m$ are mutually independent, and the integral on the right-hand side is a Stratonovich integral. The numerical scheme studied in \cite{KP} coincides with the incomplete Taylor scheme \eref{e6.1} constructed   here. Indeed, by taking $H_j=1/2$, $j=2,\dots, m$ and $H_1 = 1$ in \eref{e.rho-form} we see that  the possible rates of convergence are $  \{1,2,3,\dots\}$, and for $\rho =1,2,\dots$, the set $\widehat\Gamma(\rho )$  becomes
\begin{eqnarray*}
\widehat\Gamma(\rho ) &=& \{\al \in \Gamma : r' (\al) +2r_{1}(\al) < 2\rho +1 \},
\end{eqnarray*}
or
\begin{eqnarray}\label{e6.9i}
\widehat\Gamma(\rho ) &=& \{\al \in \Gamma : |\al| +r_{1}(\al) \leq 2\rho  \},
\end{eqnarray}
where $r_1 (\al) =\#\{i: \al_i =1\}$, $r' (\al) =\#\{i: \al_i \neq 1\}$ and $|\al|$ is the length of $\al$. 
By taking
\begin{eqnarray*}
\wt{\mathcal{E}}^{(N)}_{s,t} (y) &=&  \sum_{ \gamma \in \widehat {\Gamma} (\rho ) }
    {\mathcal{V}}_{\gamma   } I(y)     W^{ \gamma }_{s,t}\,,
\end{eqnarray*}
 with $\widehat\Gamma(\rho )$   defined in \eref{e6.9i},  we obtain
the numerical scheme considered in \cite{KP}:
\begin{eqnarray*}
y^{n}_{t}  &=&
y^n_{t_k}+ \wt{\mathcal{E}}^{(N)}_{ t_k,   t } ( y^n_{t_k   }   ), \quad y^{n}_{0}=y_{0},
\end{eqnarray*}
for $ t\in [t_{k},t_{k+1}]$, $k=0,1,\dots, n-1$.
 The following strong convergence result is obtained in \cite{KP}:
 \begin{eqnarray*}
\mE(|y_{t}-y_{t}^{n}|^{2})^{1/2} &\leq& Cn^{-\rho }.
\end{eqnarray*}
In particular, the convergence rate $n^{-\rho }$ of the incomplete Taylor scheme in  the Brownian case coincides with the convergence rate in the fBm case. So we can consider Theorem \ref{thm6.3} as a generalization of \cite{KP} to the fBm case.
\end{remark}
}

\begin{example}\label{ex6.5}
We consider the scalar SDE
\begin{eqnarray}\label{e6.9}
y_{t} &=& y_{0}+\int_{0}^{t}V(y_{s}) dB_{s},\quad y_0 \in \RR,
\end{eqnarray}
where $B$ is a one-dimensional fBm with Hurst parameter $H>1/2$.
  Take $N\in \NN$. The  order-$N$  Taylor expansion in this case is
\begin{eqnarray*}
\mathcal{E}_{s,t}^{(N)}(y) &=& \frac{1}{r!} \sum_{ r=1 }^{N}
    {\mathcal{V}}_{r   } I(y)     (B_{t}-B_{s})^{r},
\end{eqnarray*}
where  $\mathcal{V}_{1} =\mathcal{V}$, and $\mathcal{V}_{r+1} = \mathcal{V}_{r} \mathcal{V}  $, r=1,\dots, N. So the global numerical scheme   associated with this Taylor expansion is
\begin{eqnarray}\label{eq6.9}
y^{n}_{t} &=& y^{n}_{t_{k}} +  \frac{1}{r!} \sum_{ r=1 }^{N}
    {\mathcal{V}}_{r   } I(y^{n}_{t_{k}})     (B_{t}-B_{t_{k}})^{r}, \quad t \in [t_{k}, t_{k+1}].
\end{eqnarray}
This is the numerical scheme studied in \cite{GN}. By taking $m=d=1$, we recover  from Theorem \ref{thm6.3}   the strong convergence result of \eref{eq6.9} obtained in \cite{GN}: the numerical scheme  $y^{n}$ defined in \eref{eq6.9} converges to the solution $y$ of \eref{e6.9} with rate $n^{1-(N+1)H}$ when $N$ is odd and with rate $n^{-NH}$ when $N$ is even.
\end{example}

  \subsection{Modified  Taylor scheme}\label{section 7}
 In this subsection, we briefly explain how to improve the convergence rate of    the numerical scheme studied in   Section \ref{section6} by a slight modification of the scheme. The proof of the main result in this subsection is similar to   the previous subsection, and the proof is  omitted.

By comparing Proposition \ref{prop 2} with Proposition \ref{prop5.10},  we see    that   the
centered multiple integral
 \begin{eqnarray*}
\wt{J}_{r} ( \mathcal{A} ) &=&  {J}_{r} ( \mathcal{A} )  - \mE \left[ {J}_{r} ( \mathcal{A} )   \right],
 \end{eqnarray*}
   usually   will have a smaller $L_2$-norm,
 where $\mathcal{A}$ is a subset of $[0,T]^{r}$ such that the multiple  integral ${J}_{r} ( \mathcal{A} ) $ is well defined.
This   leads us to consider the following modification of the Taylor expansion.

Take  nonnegative integers  $r_1,\dots, r_m$ and denote $r' = \sum_{j=2}^m r_j$. 
Let  $\rho$ be of the form
\begin{equation}
\rho= \begin{cases} \sum_{j=1}^m r_jH_j -1  &\qquad if ~ r' ~\text{is even}   \\
   \sum_{j=1}^m r_jH_j - \max\limits_{j>1: \, r_j>0 } H_{ j}  &\qquad if ~ r' ~ \text{is odd}
  \end{cases}, 
  \label{e.theta-form-modify}
\end{equation}
  and define
\begin{eqnarray*}  
      \widehat\Gamma(\rho) &=& \{ \al \in \Gamma:  H_\al -1 < \rho, ~r'(\al) \text{ is even} \}
      \nonumber
      \\
      &&\bigcup  \{ \al \in \Gamma:  H_\al- \max\limits_{j: \,  \al_j \neq 1} H_{\al_j}    < \rho, ~r'(\al) \text{ is odd} \} ,
 \end{eqnarray*}
 where recall   that   $H_\al = H_{\al_1}+\dots+H_{\al_r}$ and $r'(\al)= \# \{ i: \al_i\neq 1 \}$.
 
 We denote $\rho'=\min\{\delta: \delta \
 \hbox{is of the form \eref{e.theta-form-modify} and }  \delta>\rho\}$, and define $\widehat\Gamma(\rho)' := \widehat\Gamma(\rho') \setminus \widehat\Gamma(\rho)$. Recall that we define the function $ D_{\gamma}(t):=  \mE\left[B^{\gamma}_{0,t}\right] $; see Remark \ref{remark5.15} for an explicit expression.
  \begin{Def}\label{def6.7}
Let  $\rho $ be of the form   \eref{e.theta-form-modify}.
 We call
\begin{eqnarray*} 
 \widehat{\mathcal{E} }_{s, t}(y)
=
  \sum_{ \gamma \in \widehat\Gamma(\rho)    }
    {\mathcal{V}}_{\gamma   } I(y)     B^{ \gamma }_{s,t} + \sum_{ \gamma  \in \widehat\Gamma(\rho)'  }
    {\mathcal{V}}_{\gamma   } I(y)   D_{\gamma}(t-s)
  \end{eqnarray*}
   the modified     Taylor expansion.
   \end{Def}

 \begin{remark}\label{remark3.6}
 In    Proposition \ref{prop5.2} we have shown  that the identity \eref{eq 5.15}  holds for    the incomplete Taylor expansion $\wt{\mathcal{E}}^{(N)}_{s,t} $. In fact,      \eref{eq 5.15} also  holds true when   $\wt{\mathcal{E}}^{(N)}_{s,t}$ is replaced by the Taylor expansion $\widehat{\mathcal{E}}_{s,t}$.  In fact, the only properties of the incomplete Taylor expansion $\wt{\mathcal{E}}^{(N)}_{s,t} $ we used  in the proof of the proposition are:
 \begin{enumerate}
\item  The multiple integrals   appearing in the proof are well defined;
\item the chain rule used in   \eref{e3.20} holds true.
  \end{enumerate}
\end{remark}

 We consider the following modified Taylor scheme:
 \begin{eqnarray}\label{e6.13i}
y^{n}_{t}  &=&
y^n_{t_k}+ \widehat{\mathcal{E}}_{ t_k,   t } ( y^n_{t_k   }   ), \quad t \in [t_{k},t_{k+1}], \quad k=0,1,\dots, n-1.
\end{eqnarray}
 As in  Remark \ref{remark3.6}, it is easy to show  that   if  $R^{e}$, $e=1,2,3,4$ are defined in \eref{eqn5.15}, \eref{eqn5.15i}, \eref{eqn 5.12} and \eref{eqn5.14i} with $\wt{\mathcal{E}}^{(N)}_{s,t}$ replaced by $\widehat{\mathcal{E}}_{s,t}$ and $y^{n}$ is the modified Taylor scheme \eref{e6.13i}, then
 identity   \eref{eq3.24} still holds true.

Based on   estimate \eref{eq7.1} and   identity \eref{eq3.24}, we can prove the following stronger  convergence result. Recall that for $\al \in \Gamma $, we denote $r_j(\al) = \#\{i:  {\al_{i}}= j\}$, $j=1,\dots, m$.

\begin{theorem} \label{thm7.2}
Assume that $V\in C^{N+2}_{b}$. Let $y$   be the solution of equation \eref{e6.1i}   and   $y^n$ be the numerical scheme   \eref{e6.13i}. Take $M>0$. If $r_j(\al)$, $j=2,\dots, m$ is even for each $\al \in \widehat\Gamma(\rho)'$,  then
 \begin{eqnarray*}
\sup_{t\in [0,T]}\left( \mE|\mathbf{1}_{\{\|B\|_{\beta} <M\} }( y_{t}-y^{n}_{t}) |^{p}\right)^{1/p} &\leq&
C_Mn^{-\rho}\sigma_{n},
\end{eqnarray*}
where $\sigma_{n}$ is defined in \eref{e5.25}
and $C_M$ is a constant depending on $M$. In particular,   the scaled error $\sigma_{n}^{-1}n^{\rho}(y_{t}-y^{n}_{t})$  is bounded in probability for each $t\in [0,T]$.
 \end{theorem}
 \begin{remark}
As in Remark \ref{remark6.4}, we can show that   the convergence rate obtained in Theorem \ref{thm7.2} is optimal.
\end{remark}
 Following are two applications of the modified   Taylor scheme. For simplicity, we   assume from now on that $B=(B^1,\dots,B^m)$ is a $m$-dimensional standard fBm with Hurst parameter $H>1/2$.

\begin{example}\label{ex6.10}
Take $\rho=2H-1$. Then $\widehat\Gamma(\rho) = \{1,\dots, m  \}$ and $N=1$.  Take $\gamma \in \Gamma$ such that $|\gamma|=N+1=2$, and denote $\gamma=(j,j')$. We calculate $D_{\gamma}(t) $,
\begin{eqnarray*}
D_{\gamma}(t)=\mE[B^{\gamma}_{0,t}] &=& \mE\left[\int_{0}^{t} \int_{0}^{u} d B^{j}_{u'} dB^{j'}_{u}\right]
\\
&=& \frac12 t^{2H} \delta_{jj'},
\end{eqnarray*}
where $\delta_{jj'}$ is the Kronecker function such that $\delta_{jj'}=1$ when $j=j'$ and $\delta_{jj'}=0$ other wise.
 Then the modified  order-$2$  Taylor expansion is
\begin{eqnarray*}
\widehat{\mathcal{E}} (y) &=& V(y) B_{s,t} + \frac12 \sum_{j =1}^{m} \sum_{i=1}^{d} V^{i}_{j} \partial_{i} V_{j} (y) (t-s)^{2H} .
\end{eqnarray*}
 The modified Taylor scheme associated with this   scheme is
 \begin{eqnarray*}
y^{n}_{t_{k+1}} &=& y^{n}_{t_{k}} + V(y^{n}_{t_{k}} ) B_{t_{k}, t_{k+1}} + \frac12 \sum_{j =1}^{m}   ( \partial V_{j} V_{j} )(y^{n}_{t_{k}} ) (T/n)^{2H} ,
\end{eqnarray*}
for   $k=0,\dots, n-1$.
This is   the modified Euler scheme introduced in \cite{HLN}. By taking $\rho=2H-1$ we recover from Theorem \ref{thm7.2} the convergence result \eref{e1.10i}.
\end{example}

\begin{example}\label{ex6.11}
Let $N$ be an odd integer. We consider the model in Example \ref{ex6.5}.  The modified Taylor expansion for this model becomes
\begin{eqnarray*}
\widehat{\mathcal{E}}_{s,t}(y) &=& \sum_{ r=1 }^{N} \frac{1}{r!}
    {\mathcal{V}}_{r   } I(y)     (B_{t}-B_{s})^{r} + \frac{1}{(N+1)!!}    {\mathcal{V}}_{N+1   } I(y)   (t-s)^{(N+1)H }.
\end{eqnarray*}
The modified   Taylor scheme   associated with this   scheme is
\begin{eqnarray*}
y^{n}_{t_{k+1}} &=& y^{n}_{t_{k}} +  \sum_{ r=1 }^{N} \frac{1}{r!}
    {\mathcal{V}}_{r   } I(y^{n}_{t_{k}})     (B_{t_{k+1}}-B_{t_{k}})^{r}
    \\
    &&+ \frac{1}{(N+1)!!}    {\mathcal{V}}_{N+1   } I(y^{n}_{t_{k}})   (T/n)^{(N+1)H } .
\end{eqnarray*}
According to Theorem \ref{thm7.2},  the convergence rate of this scheme is $n^{1/2-H(N+1)}$ for $1/2<H<3/4$; $n^{1/2-3(N+1)/4} \sqrt{\log n}$ for $H=3/4$ and $n^{ -1-H(N-1)}$ for $H>3/4$, which  improves the numerical scheme  \eref{eq6.9}.
\end{example}

\section{Numerical approximation in the rough paths case}\label{section7}
In this section, we consider the numerical approximation for the $d$-dimensional    rough differential equation:
 \begin{eqnarray}\label{e7.1i}
dy_t
  &= &    V  (y_t )d x_t
   \end{eqnarray}
 on $ [0, T]$, where the control function $x \in C([0,T], \RR^{m})$ is not differentiable, but is enriched with a proper algebraic structure.   The theory of rough paths analysis has been developed from the seminal paper by Lyons \cite{Ly2}.  Our settings  in this section will follow closely  \cite{FV}.

As in \eref{e3.2i}, we can define the Taylor scheme  for the solution of \eref{e7.1i} based on the Taylor expansion:
\begin{eqnarray*}
\mathcal{E}_{s,t}^{(N)}(y  )
&=&
 \sum_{ \gamma \in \Gamma,  |\gamma| \leq N }
    {\mathcal{V}}_{\gamma   } I(y)     x^{ \gamma }_{s,t}
  \,, \quad y \in \RR^{d},
\end{eqnarray*}
where  $ x^{ \gamma }_{s,t}$ is a multiple rough integral that we will define later. Our aim in this section is to show that the expression for $y-y^n$ derived in \eref{eq3.24},  still holds true in the rough paths case.
Notice that   the results in   Section \ref{section3.1} are only based on the algebraic properties of the differential equation \eref{eqn5.10}, so to  show       \eref{eq3.24}, it suffices to derive a rough paths version of  Proposition \ref{prop 4}.

In the first subsection, we briefly review some concepts and results from the rough paths theory. In the second subsection, we generalize Proposition \ref{prop 4} to the rough paths case.

 \subsection{Elements of the rough paths theory}\label{section 3}

Denote by $C^{p\tvr}([s,t])$ the collection of continuous functions on $[s,t]$ with bounded $p$-variation.  We first define  the  step-$N$  signature.
\begin{Def}
The  step-$N$  signature
 of $\gamma \in C^{1\text{-var}} ([s, t]; \mathbb{R}^m)$ is given by
 \begin{eqnarray*}
 S_N(\gamma)_{s, t} & \equiv & \left( 1, \int_{s<u<t} d\gamma_u\,, \dots, \int_{s<u_1<\dots<u_N <t  } d\gamma_{u_1} \otimes \cdots \otimes d\gamma_{u_N } \right)
 \\
& \in& \oplus_{k=0}^N (\mathbb{R}^m)^{\otimes k}.
 \end{eqnarray*}
\end{Def}
We denote by $G^N(\mathbb{R}^m)  $ the so-called \emph{free nilpotent group of step $N$}    over $\RR^m$, that is,
 \begin{eqnarray*}
\ngs := \left\{
S_N( \gamma )_{0, 1} : \gamma \in \cm
\right\}.
\end{eqnarray*}
It is well-known that $\ngs$ is a   Lie group with respect to the tensor multiplication $\otimes$, and  for every $g \in \ngs$,  the  ``Carnot-Carath\'eodory norm"
\begin{eqnarray*}
\|g\| := \inf \left\{
\int_0^1 |d\gamma|: \gamma \in \cm \text{ and } S_N(\gamma)_{0,1} =g
\right  \}
\end{eqnarray*}
is finite and achieved at some minimizing path $\gamma^*$, i.e.
\begin{eqnarray*}
\|g\| = \int_0^1 |d\gamma^*| \text{ and }   S_N(\gamma^*)_{0,1} =g  \,.
\end{eqnarray*}
The norm $\| \cdot \|$
   leads to a metric
$d(g, h) :=   \|g^{-1} \otimes h \|
$
 on $\ngs$, called the \emph{Carnot-Carath\'eodory metric}.

 Consider a $\ngs$-valued path $ \bfx $ on the time interval $[0, T]$. For any $p \geq 1$,
  we denote by
 \begin{eqnarray*}
 \|\bfx \|_{p \text{-var} ;[s, t]}
 &=&
 \sup_{(t_i) \subset[s,t]} \left( \sum_i d(\bfx_{t_i}, \bfx_{t_{i+1 } } )^p \right)^{1/p}
  \end{eqnarray*}
  the $p$-variation norm of $\bfx$, and when $s=0$, $t=T$, we simply write $  \|\bfx \|_{p \text{-var}  } =  \|\bfx \|_{p \text{-var} ;[0, T]}    $.
 The following proposition shows that an abstract path $\bfx : [0, T] \rightarrow G^N(\mathbb{R}^m)$ of bounded $p$-variation can be approximated by a sequence of  step-$N$  signatures; see \cite{FV}. We  denote by $d_\infty$   the infinity distance, that is,
 \begin{eqnarray*}
 d_{\infty}( \bfx, \bfx' ) := \sup_{t\in [0, T] } d (\bfx_t, \bfx_t'),
\end{eqnarray*}
and  $C^{p\text{-var} } ([0, T]; G^{N} (\mathbb{R}^m ) )$   stands for
\begin{eqnarray*}
 \left\{ \bfx \in C([0, T]; G^{N}(\mathbb{R}^m ) ): \|\bfx\|_{p\text{-var} } <\infty \right\}.
\end{eqnarray*}

\begin{prop}\label{prop 3.2}
Let $\bfx \in C^{p\text{-var} } ([0, T]; G^N(\mathbb{R}^m))$, $p\geq 1$, with $\bfx_0 = (1, 0,\dots, 0)$. Then there exists $( x^{n})_n \subset C^{1  } ([0, T]; \mathbb{R}^m ) $, such that
\begin{eqnarray*}
d_{\infty} (\bfx, S_N(x^{n} ) ) \rightarrow 0 \text{ as } n \rightarrow \infty,\quad
\text{and}\quad
\sup_n \|S_N (x^{n}) \|_{p \text{-var} }  < \infty.
\end{eqnarray*}
\end{prop}

Consider the  \emph{   rough  differential equation} (RDE)
\begin{eqnarray}\label{eqn3.1}
d y_t = V( y_t) d \bfx_t, \quad   y_0 \in  \RR^d   ,
\end{eqnarray}
where $\bfx : [0, T] \rightarrow G^{\lfloor p \rfloor} (\RR^m ) $ is a \emph{weak geometric $p$-rough path}, i.e.  an element in
$
C^{p\text{-var} } ([0, T]; G^{\lfloor p \rfloor} (\mathbb{R}^m ) )
$.

\begin{Def}
We take $\bfx \in C^{p \tvr } ( [0,T]; G^{\lfloor p \rfloor} (\RR^m) ) $. We say that $  y \in C ( [0,T]; G^{\lfloor p \rfloor} (\RR^d) )  $ is a     solution  to   equation  \eref{eqn3.1} if for any sequence $(x^n)_n$ in $C^{1\tvr} ([0,T]; \RR^m)$ such that
\begin{eqnarray}\label{eqn3.1i}
d_{\infty} (\bfx , S_{ \lfloor p \rfloor }( x^{n} ) ) \rightarrow 0 \text{ as } n \rightarrow \infty,
\text{ and }
\sup_n \|S_{\lfloor p \rfloor} (x^{n}) \|_{p \tvr }  < \infty,
\end{eqnarray}
with $y^n$   the solution of the equation $dy^n = V (y^n) dx^n $, there exists a subsequence of $(x^n, y^n) $ $($which we still denote by $(x^n, y^n)$$)$ such that
$ y^n$ converges uniformly to $ y$ when $n\rightarrow \infty$.  
\end{Def}

\begin{thm}\label{thm 3.3}
Assume that  $V= (V_j)_{1\leq j\leq m}$ is a collection of  $C^{\lfloor p \rfloor+1}_{b}$-vector fields on $\RR^d$.
 Then, there exists a unique   RDE solution to the equation \eref{eqn3.1}.     The conclusion still holds   when   $V = (V_j)_{1\leq j\leq m} $ is a collection of linear vector fields.
 \end{thm}

 To define the rough path integral  $\int_0^{\cdot} V(x_t) d\bfx_t   $, we consider  the following RDE
 \begin{eqnarray*}
 d z_t &=& d \bfx_t, \\
 d y_t&=& V (z_t) d \bfx_t,
 \\
 ( z_0,  y_0 ) &=&(0, 0).
\end{eqnarray*}
   It follows from Theorem \ref{thm 3.3} that if $V \in C^{\lfloor p \rfloor+1}_{b} (\RR^d) $,  then the above equation
 has a unique
solution $(  z,  y )$. We call $ y$
 the \emph{rough integral}, denoted as  $   \int_0^{\cdot} V( x_t ) d\bfx_t$.

\subsection{Multiple rough integrals}
In this subsection,
   we consider some multiple rough integrals.
  We denote by $S_N(\bfx)$, $N \geq \lfloor p \rfloor $,    the so-called \emph{Lyons lift} of $\bfx$, which satisfies $\pi_i( S_N(\bfx) ) = \pi_i(\bfx)$ for $i=1,\dots, \lfloor p \rfloor$ and $S_N(\bfx) \in C^{p\tvr} ([0, T]; G^{N} (\RR^m))$.   We refer to   Section 9.1.2 in \cite{FV} for the proof of   the existence and uniqueness for the \emph{Lyons lift} of a weak geometric $p$-rough path. Following is a basic fact on weak geometric rough paths.  It shows that   if $\bfx$ is a weak geometric $p$-rough path,  $p\geq 1$,   then the multiple integral
	 \begin{eqnarray*}
 \int_s^t   \cdots \int_s^{u_2} d\bfx_{u_1} \otimes \cdots \otimes d\bfx_{u_i} , \quad s,t \in [0,T],
\end{eqnarray*}
coincides with the $i$th tensor level of the order-$N$ \emph{Lyons's lift}    of $\bfx$, where $N \geq i$.
\begin{lemma}\label{lem3.5}
 Let $\bfx$ be a weak geometric $p$-rough path and $N \in \NN$. Then for each $i=1,\dots, N$, we have
\begin{eqnarray*}
\pi_i(S_N(\bfx)_{s,t}) =   \int_s^t \int_s^{t_i} \cdots \int_s^{t_{2}}    d \bfx_{t_1} \otimes \cdots  \otimes  d \bfx_{t_{i-1}}  \otimes d \bfx_{t_i}  \, .
\end{eqnarray*}
\end{lemma}
 \begin{proof}  Without   loss of generality, we assume $s=0$.
 We consider the following linear   system of equations
  \begin{eqnarray*}
 d   z^1_t &=& d\bfx_t
 \nonumber
 \\
  d z^2_t &= & z^1_{t} \otimes d \bfx_t
  \nonumber
   \\
 & \cdots&
  \\
  d  z^{N}_t &= & z^{N-1}_{t}  \otimes  d\bfx_t,
\nonumber
 \end{eqnarray*}
 with the initial value $ (\bfz^1_0,\dots, \bfz^N_0) = 0$.
For convenience, we denote the equation system by
 \begin{eqnarray}\label{e4.12}
 d z_t = V(z_t)d\bfx_t
,~~ z_0=0,
 \end{eqnarray}
 where $V=(V_{1},\dots, V_{m})$ and $V_{j}(z) = A^{j}z+b^{j}$.
 It is easy to see that $A^{j}  $ and $b^{j}$ are $(\sum_{i=1}^N m^i ) \times (\sum_{i=1}^N m^i) $ and $  (\sum_{i=1}^N m^i )  \times 1$ matrices, respectively, whose entries    take values $0$ and $1$.
  According to Theorem \ref{thm 3.3},  the above equation system has a unique solution. In fact, it  can be verified directly that
 $$
 z^i_t =    \int_0^t \int_0^{t_i} \cdots \int_0^{t_{2}}    d \bfx_{t_1} \otimes \cdots  \otimes  d \bfx_{t_{i-1}}  \otimes d \bfx_{t_i}  ,~~ i=1,\dots, N.
 $$

  According to the definition of   solution of   RDE, for any sequence $(x^n)_n$ in $C^{1\tvr} ([0,T]; \RR^m)$ satisfying \eref{eqn3.1i},   there exists a subsequence of $(x^n)_n$ (which we still denote by $(x^n)_n $) such that
 $S_N (x^n)$ converges to the solution $(1, z^1, \dots, z^N)$ of \eref{e4.12} uniformly. On the other hand, according to   Proposition \ref{prop 3.2}, we can choose the sequence $(x^n)_n $ such that $S_N(x^n)$ converges to $S_N(\bfx)$ uniformly. Therefore, we must have $S_N(\bfx) = (1,z^1, \dots, z^N)$. This completes the proof. \end{proof}

 \smallskip

 Following is our main result in this subsection, which can be shown by  approximation and  with the help of  Proposition \ref{prop 4} and   Lemma \ref{lem3.5}.
 For   $\al \in \Gamma $ such that $|\al|=r$, we denote
 \begin{eqnarray*} 
  x^{\al}_{s,t}&:=&  \pi_{r}(S_{r} (\bfx) )^{\al}_{s,t}  =\int_s^t \int_s^{t_i} \cdots \int_s^{t_{2}}    d \bfx_{t_1}^{ \al_{1}} \cdots     d \bfx_{t_{i-1}}^{\al_{r-1}}    d \bfx_{t_i}^{\al_{r}},
\end{eqnarray*}
  where the second equality holds because of Lemma \ref{lem3.5}.

 \begin{prop}
 We take $\bfx \in C^{p \tvr } ( [0,T]; G^{\lfloor p \rfloor} (\RR^m) ) $.
 Let $\gamma^1$, $\dots$, $\gamma^p $ be multi-indices in $\Gamma$. We  denote  $r= |\gamma^1| +\dots+|\gamma^p|$  and $\vec{\tau} = (\tau_1,\dots, \tau_p)$ such that $\tau_i = |\gamma^1| +\dots+|\gamma^i| $, $i=1,\dots, p$. Denote $\gamma = (\gamma^1, \dots, \gamma^p) \in \Gamma$.  Then
$$
  \int^{t }_{ s }   \int^{t_p }_{s }  \cdots   \int^{t_{2 } }_{ s } d \bfx^{\gamma^1}_{s,t_1}  \cdots d\bfx^{\gamma^{p-1}}_{s,t_{p-1}}  d\bfx^{\gamma^p }_{s,t_p}
          =
           \sum_{ \rho \in \Xi_r ( \vec{\tau}\, )
 }  x^{\gamma \circ \rho^{-1} }_{s, t }\,.
           $$

\end{prop}

 \section{Appendix}

 \subsection{Estimates of some multiple   integrals}
 In this subsection we provide some estimates  on multiple Riemann-Stieltjes integrals
needed in this paper.   We also refer to  \cite{hustochastics} for more studies.

 We take  $r \in \mathbb{N}$ and $\beta_j \in (\frac12, 1 ] $, $j=1,\dots, r$, and $s,s' \in [0,T]$. Let  $g^j  $ be a H\"older continuous function of order $\beta_j  $ on $[s, s' ]$ for $j=1,\dots, m$.
  In this subsection, we consider  the following multiple  integral,
\begin{eqnarray}\label{a.1i}
   g^{\al}_{s,t}  : =
\int_s^t   \int_{s }^{s_1}       \cdots  \int_{ s  }^{s_{r-1}}
 dg^{ {1 }}_{s_{r }} \cdots dg^{ r-1}_{s_2} dg^{  r }_{s_1} \, ,
\end{eqnarray}
where $\al=(1,\dots,r)$.

Recall that for $f \in C^{\beta'}$ and $ h \in C^{\beta}$ such that $\beta+\beta'>1$, we have the following estimate   (see, for instance, \cite{HLN}):
\begin{eqnarray}\label{e7.2i}
\left| \int_{s}^{t} fdh \right| &\leq& K\left[ \|f\|_{s,t,\infty}+\|f\|_{s,t,\beta'} (t-s)^{\beta'} \right] \|h\|_{\beta}(t-s)^{\beta}.
\end{eqnarray}
The following lemma provides an estimate for \eref{a.1i}, which is obtained applying repeatedly  (\ref{e7.2i}).

\begin{lemma}\label{lem 8.1i}
 There exists a constant $K>0$ such that for   $t,t' \in [s,s']$,    we have
\begin{equation}\label{eqn10.1i}
\left|    g^{\al}_{s,t'} -g^{\al}_{s,t}   \right| \leq
      K
  \left(
    \prod_{j =1}^r    \| g^j  \|_{ s,s',  \beta_j  }
    \right) (s-s')^{   \sum_{j =1   }^{r-1} \beta_j   }
       (t'-t)^{\beta_{r } } .
   \end{equation}
\end{lemma}
\noindent \textit{Proof:} \quad We prove the lemma by induction. The inequality is clear when $r = 1$.
Suppose the lemma is true for   $ 1,\dots, r-1$.
   In the case $\beta_{r } = 1 $, we  have
\begin{eqnarray}\label{e 7.1}
 \left|  g^{\al}_{s,t'}-g^{\al}_{s,t}     \right|
 =
 \left| \int_t^{t'}   g^{\al- }_{s,u}   dg^{r }_{u} \right|
 \leq
 \|g^{ r  }\|_{ s,s',\beta_{r } } \sup_{  [s, s' ] }  \left|   g^{\al- }_{s,\cdot}    \right| (t'-t)   .
\end{eqnarray}
 By induction assumption we have
 \begin{eqnarray*}
 \left|  g^{\al- }_{s,t}     \right|  \leq
      K
  \left(
    \prod_{j =1 }^{r-1}    \|g^j \|_{s,s' ,   \beta_j   }
    \right) (s'-s)^{ \sum_{j =1 }^{ r -1} \beta_j  }, ~~s_1 \in [s,s' ] \, .
\end{eqnarray*}
  Substituting the above inequality into \eref{e 7.1} we obtain the   estimate \eref{eqn10.1i}.

In the case $\beta_{r } \in (\frac12, 1) $, it follows from    inequality \eref{e7.2i} that
\begin{eqnarray}\label{eqn 6.1}
 \left|  g^{\al}_{s,t'}-g^{\al}_{s,t}       \right|
 &=&
 \left| \int_t^{t'}   g^{\al-}_{s,u}   d g^{r }_{u }  \right|
 \nonumber
 \\
 &\leq& K
  \left( \sup_{  [s,s']} |  g^{\al- }_{s,\cdot}     |  + \| g^{\al- }_{s,\cdot}   \|_{s, s',\beta_{r-1 }  } ( s' -s)^{  \beta_{r-1 }} \right) \|g^{r } \|_{ s, s', \beta_{r } } ( t' - t )^{ \beta_{r } }
 .
 \end{eqnarray}
By induction assumption we have
 \begin{eqnarray*}
 \left\|  g^{\al- }_{s,\cdot}     \right\|_{s,s', \infty }  \leq
      K
  \left(
    \prod_{j =1 }^{r-1}    \|g^j \|_{s, s',   \beta_j   }
    \right) (s'-s)^{  \sum_{j =1 }^{ r-1 } \beta_j  } ,
\end{eqnarray*}
and
\begin{eqnarray*}
 \left\|  g^{\al- }_{s,\cdot}     \right\|_{s,s',  \beta_{r-1}  }  \leq
      K
  \left(
    \prod_{j =1 }^{r-1}    \|g^j \|_{ s,s', \beta_j  }
    \right) (s'-s)^{ \sum_{j =1 }^{ r-2  } \beta_j  } .
\end{eqnarray*}
Substituting  the above two inequalities into \eref{eqn 6.1} we have
\begin{eqnarray*}
 \left|  g^{\al }_{{s, }t'} -g^{\al }_{s, t}    \right|
 \leq
 K
  \left(
    \prod_{j =1}^{r-1}    \| g^j \|_{  s,s',\beta_j  }
    \right) (s'-s)^{   \sum_{j =1   }^{r-1} \beta_j   }
     \| g^{r} \|_{ s,s',  \beta_{r } } (t'-t)^{\beta_{r } } .
 \end{eqnarray*}
  The proof is now complete.  \end{proof}

\medskip

Let $f_t= (f^1_t,\dots, f^r_t)$, $t\in [0,T]$, be  a function in $ C^\be(\RR^r) $, where $\beta \in (1/2,1 ]$. We denote
$$
 {g}^{j }_{s,  t}(f^{j})=\int_{s}^{t} f^{j}dg^{j},
$$
 and
\begin{eqnarray*}
  {g}^{\al }_{s,  t}  (f^{\al})
  = \int_s^t         \cdots  \int_{ s  }^{s_{2}}
f_{s_1 }^1
 dg^{ {1 }}_{s_{1 }} \cdots   f_{s_r }^r dg^{  r }_{s_r} \, .
 \end{eqnarray*}

\begin{lemma}\label{lem 7.2}
 There exists a constant $K>0$ such that for   $t,t' \in [s,s']$,    we have
\begin{eqnarray*}
\left|    {g}^{\al }_{s,  t'}(f^{\al})   -   {g}^{\al }_{s,  t}(f^{\al})    \right| & \leq &
      K  (s-s')^{   \sum_{j =1   }^{r-1} \beta_j   }
       (t'-t)^{\beta_{r } }
  \left(
    \prod_{j =1}^r    \| g^j \|_{ s,s', \beta_j  } ( \|f^j\|_{ s,s' , \infty } +  \|f^j\|_{s,s' ,\be }  )
    \right) .
   \end{eqnarray*}
\end{lemma}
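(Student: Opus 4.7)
The plan is to mirror the inductive proof of Lemma \ref{lem 8.1i}, with extra bookkeeping to track the H\"older integrands $f^j$. I proceed by induction on $r$, with the key observation that the definition in Lemma \ref{lem 7.2} satisfies the recursion
\[
g^{\al}_{s,t}(f^{\al}) \;=\; \int_s^t g^{\al-}_{s,u}(f^{\al-})\, f^r_u\, dg^r_u,
\qquad \al-=(1,\dots,r-1),
\]
so that the increment admits the representation
\[
g^{\al}_{s,t'}(f^{\al})-g^{\al}_{s,t}(f^{\al}) \;=\; \int_t^{t'} g^{\al-}_{s,u}(f^{\al-})\, f^r_u\, dg^r_u.
\]

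For the base case $r=1$, I apply (\ref{e7.2i}) to $\int_t^{t'} f^1_u\, dg^1_u$ with the pair $(\beta,\beta_1)$: since $\beta+\beta_1>1$, this immediately yields
\[
|g^{1}_{s,t'}(f^1)-g^{1}_{s,t}(f^1)|
\le K\,\bigl(\|f^1\|_{s,s',\infty}+\|f^1\|_{s,s',\be}(t'-t)^{\be}\bigr)\|g^1\|_{s,s',\beta_1}(t'-t)^{\beta_1},
\]
which absorbs into the target by using $(t'-t)^{\be}\le(s'-s)^{\be}$ where needed.

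For the inductive step, I apply (\ref{e7.2i}) to the integral displayed above with integrand $h(u):=g^{\al-}_{s,u}(f^{\al-})f^r_u$ and integrator $g^r$, using a H\"older exponent $\beta':=\beta_{r-1}\wedge\beta$ (all exponents exceed $1/2$, so $\beta'+\beta_r>1$). The sup and seminorm of $h$ on $[t,t']$ are controlled via the product rule for H\"older functions:
\[
\|h\|_{t,t',\infty}\le \|g^{\al-}_{s,\cdot}(f^{\al-})\|_{s,s',\infty}\|f^r\|_{s,s',\infty},
\]
\[
\|h\|_{t,t',\beta'}\le \|g^{\al-}_{s,\cdot}(f^{\al-})\|_{s,s',\infty}\|f^r\|_{s,s',\beta'}+\|f^r\|_{s,s',\infty}\|g^{\al-}_{s,\cdot}(f^{\al-})\|_{s,s',\beta'}.
\]
The inductive hypothesis, applied both with ``$t'-t=s'-s$'' (for the sup norm) and with ``$s'-s$ replaced by itself and exponent $\beta_{r-1}$ on the outer variable'' (for the seminorm), produces
\[
\|g^{\al-}_{s,\cdot}(f^{\al-})\|_{s,s',\infty}\le K(s'-s)^{\sum_{j=1}^{r-1}\beta_j}\prod_{j=1}^{r-1}\|g^j\|_{s,s',\beta_j}(\|f^j\|_{s,s',\infty}+\|f^j\|_{s,s',\beta}),
\]
\[
\|g^{\al-}_{s,\cdot}(f^{\al-})\|_{s,s',\beta_{r-1}}\le K(s'-s)^{\sum_{j=1}^{r-2}\beta_j}\prod_{j=1}^{r-1}\|g^j\|_{s,s',\beta_j}(\|f^j\|_{s,s',\infty}+\|f^j\|_{s,s',\beta}).
\]
Plugging these into (\ref{e7.2i}) and collecting the resulting $(s'-s)^{\sum_{j=1}^{r-1}\beta_j}(t'-t)^{\beta_r}$ factor gives the claim, upon absorbing the redundant $(t'-t)^{\beta'}\le (s'-s)^{\beta'}$ into the $(s'-s)$-power.

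The main technical obstacle is the interplay of H\"older exponents: because $f^r$ only carries regularity $\beta$ while $g^{\al-}_{s,\cdot}(f^{\al-})$ carries $\beta_{r-1}$, I must pass to the common exponent $\beta'=\beta_{r-1}\wedge\beta$ in the product bound and then convert excess regularity into powers of $(s'-s)$ via $\|f\|_{s,s',\beta'}\le\|f\|_{s,s',\be}(s'-s)^{(\be-\be')_+}$. With the convention that the $\|f^j\|_{\infty}$ terms catch any leftover ``missing'' H\"older regularity, the constants and exponents line up correctly, yielding the stated estimate.
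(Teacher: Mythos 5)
Your proof is correct, but it takes a different route from the paper's. The paper's proof is a two-line reduction: since $f^j\,dg^j = d\tilde g^j$ with $\tilde g^j_u := \int_s^u f^j\,dg^j$, the weighted multiple integral $g^{\al}_{s,t}(f^{\al})$ \emph{is} the unweighted multiple integral of Lemma \ref{lem 8.1i} driven by the new signals $\tilde g^1,\dots,\tilde g^r$; applying that lemma as a black box gives the bound with $\prod_j\|\tilde g^j\|_{s,s',\beta_j}$, and each single factor $\|\tilde g^j\|_{s,s',\beta_j}$ is then controlled by one application of the Young estimate \eref{e7.2i} (or trivially when $\beta_j=1$). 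You instead rerun the entire induction of Lemma \ref{lem 8.1i} from scratch, carrying the weights $f^j$ along and invoking the product rule for H\"older seminorms plus the exponent-lowering trick $\|f\|_{s,s',\beta'}\le\|f\|_{s,s',\beta}(s'-s)^{\beta-\beta'}$ at each stage; I checked that the powers of $(s'-s)$ and $(t'-t)$ do line up as you claim. What the paper's approach buys is brevity and reuse of an already-proved estimate; what yours buys is a self-contained argument that does not depend on recognizing the absorption $f^j\,dg^j=d\tilde g^j$, at the cost of re-doing the bookkeeping of mixed H\"older exponents that the reduction avoids entirely. One cosmetic remark: in the base case $r=1$ the target has no $(s'-s)$ factor (the sum $\sum_{j=1}^{r-1}\beta_j$ is empty), so the leftover $(t'-t)^{\beta}$ should simply be bounded by $T^{\beta}$ rather than ``absorbed into $(s'-s)^{\beta}$''; this does not affect the validity of the argument.
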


\begin{proof}  Applying Lemma \ref{lem 8.1i} yields
 \begin{equation} \label{a.5}
\left|    {g}^{\al }_{s,  t'}(f^{\al})   -   {g}^{\al }_{s,  t}(f^{\al})    \right|  \leq
      K
  \left(
    \prod_{j =1}^r    \|  {g}^j_{s, \cdot} (f^{j}) \|_{  s,s', \beta_j  }
    \right) (s-s')^{   \sum_{j =1   }^{r-1} \beta_j   }
       (t'-t)^{\beta_{r } } .
   \end{equation}
  In the case $\beta_j \in (\frac12, 1) $, it follows from inequality \eref{e7.2i} that
\begin{align}\label{a.6}
 \left\|   {g}^j_{s, \cdot}(f^{j}) \right\|_{s,s',\be_j }   \leq  C(\|f^j\|_{s,s',\infty  } + \|f^j \|_{s,s',\beta_j  }  ) \|g^j\|_{s,s',\beta_j }   .
  \end{align}
 In the case $\be_j = 1$,
 we have
 \begin{align}\label{a.7}
 \left\|   {g}^j_{s, \cdot}(f^{j}) \right\|_{s,s',\be_j }   \leq    \|f^j\|_{s,s',\infty  }   \|g^j\|_{s,s',\be_j }   .
  \end{align}
  The lemma then follows by substituting \eref{a.6} and \eref{a.7} into \eref{a.5}.
\end{proof}

\subsection{Estimates of   numerical solutions}
  In this subsection, we derive   upper bound estimates for
  the numerical solutions. We  follow the approaches of   \cite{HLN}.
 We first state an auxiliary result  that provides     estimates  on   integrals whose integrands are step functions.
We define   the seminorm,
\begin{eqnarray*}
  \|x\|_{a,b, \beta, n} = \sup \left\{\frac{|x_u- x_v|}{|v-u|^{\beta}}; \quad u,v \in D \right\}.
\end{eqnarray*}
Recall that $D=\{kT/n: k=0,1,\dots, n\}$ is a partition of $[0,T]$.
When $a=0$ and $b=T$,   we simply write $\|x\|_{\beta,n} = \|x\|_{a,b,\beta,n}$. We will denote $\eta(t)=t_{k}$ for $t\in [t_{k},t_{k+1})$.
\begin{lemma}\label{lem2.1}
Let   $y=\{y_t, t\in [0,T]\}$  be a function with values in  $\mathbb{R}^m$ such    that   $ \| y\|_{\beta ,n} <\infty $,  $n\ge 1$. Take   $V \in C^{1}_{b}(\mathbb{R} ^{m} )$, and  $x \in C^{\beta'} ([0,T])$  such that $\beta+\beta'>1$.
 Then
 for  $s,t \in D$ such that $ s< t$  we have
 \begin{eqnarray*}\label{equation estimate frac int in lemma}
  \left|\int^t_s V(   y_{\eta(r)})dx_r \right|
 & \leq     &
 K \left[ 1
      +        \| y\|_{s, t , \beta, n}      (t-s)^{  \beta    } \right]  \|x \|_{\beta'} (t-s)^{\beta'},
  \end{eqnarray*}
  where the  $K$ is a constant depending on $\beta$, $\beta'$,  $\|V\|_{\infty} $ and $ \| \partial V \|_{\infty} $. 
 \end{lemma}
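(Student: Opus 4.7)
\textbf{Proof plan for Lemma \ref{lem2.1}.}

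The plan is to exploit the fact that $s,t\in D$, which forces $V(y_{\eta(\cdot)})$ to be a step function on $[s,t]$ that is constant on each discretisation subinterval. Writing $s=t_l$, $t=t_L$ with $l<L$, the Riemann--Stieltjes integral collapses to a finite sum,
\[
 \int_s^t V(y_{\eta(r)})\,dx_r \;=\; \sum_{k=l}^{L-1} V(y_{t_k})\bigl(x_{t_{k+1}}-x_{t_k}\bigr),
\]
and, since $\eta(s)=s$, we decompose it as
\[
  V(y_s)(x_t-x_s) \;+\; \int_s^t \bigl[V(y_{\eta(r)})-V(y_s)\bigr]\,dx_r.
\]
The first term is immediately bounded by $\|V\|_\infty\|x\|_{\beta'}(t-s)^{\beta'}$, which is the source of the ``$1$'' inside the bracket in the claimed estimate.

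For the remainder I would apply the fractional integration by parts formula \eref{e5.3i} with integrand $f(r)=V(y_{\eta(r)})-V(y_s)$ and integrator $x$. Choosing $\delta\in(1-\beta',\beta)$ (which is possible since $\beta+\beta'>1$), the boundary contribution vanishes because $f(s)=0$, leaving
\[
  \int_s^t f(r)\,dx_r \;=\; (-1)^{\delta}\int_s^t D^{\delta}_{s+}f(r)\,D^{1-\delta}_{t-}(x_{\cdot}-x_t)(r)\,dr.
\]
The estimate $|D^{1-\delta}_{t-}(x_{\cdot}-x_t)(r)|\le C\|x\|_{\beta'}(t-r)^{\beta'+\delta-1}$ is standard. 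The crucial step is to bound $|D^{\delta}_{s+}f(r)|$, and this is where the structure of $\eta$ enters: although $f$ is discontinuous, the facts that $\eta$ is non-decreasing and $\eta(s)=s$ yield $|V(y_{\eta(r)})-V(y_s)|\le \|\partial V\|_\infty\|y\|_{s,t,\beta,n}(r-s)^{\beta}$, while for $s\le u<r$ one observes that either $\eta(u)=\eta(r)$ (in which case the increment is zero) or $u<\eta(r)\le r$, so that $\eta(r)-\eta(u)\le r-u$. This gives
\[
  \bigl|D^{\delta}_{s+}f(r)\bigr|\;\le\; C\,\|\partial V\|_\infty\,\|y\|_{s,t,\beta,n}\,(r-s)^{\beta-\delta}.
\]
Inserting the two fractional derivative bounds and computing the Beta integral $\int_s^t(r-s)^{\beta-\delta}(t-r)^{\beta'+\delta-1}dr= B(\beta-\delta+1,\beta'+\delta)(t-s)^{\beta+\beta'}$ produces the contribution $C\|\partial V\|_\infty\|y\|_{s,t,\beta,n}\|x\|_{\beta'}(t-s)^{\beta+\beta'}$, which combined with the trivial bound for $V(y_s)(x_t-x_s)$ yields the stated inequality.

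The main technical obstacle is precisely the control of $D^{\delta}_{s+}f$ on a step-function integrand: a naive application of \eref{e7.2i} fails because $f$ is discontinuous, and a crude Abel summation of the sum representation produces a spurious factor $(L-l)^{1-\beta}$. The dichotomy ``either $\eta(u)=\eta(r)$ or $\eta(r)-\eta(u)\le r-u$'' is what rescues the Hölder-type estimate and is the one point that deserves care; everything else is bookkeeping. (Equivalently, one can prove the same bound by checking that the step function $V(y_{\eta(\cdot)})$ has $1/\beta$-variation on $[s,t]$ no larger than $\|\partial V\|_\infty\|y\|_{s,t,\beta,n}(t-s)^{\beta}$ via the same telescoping, and then invoking the Young--Loéve inequality against $\|x\|_{1/\beta'\mathrm{-var}}\le\|x\|_{\beta'}(t-s)^{\beta'}$.)
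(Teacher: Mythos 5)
First, a point of comparison: the paper does not actually prove Lemma \ref{lem2.1}; it simply cites \cite{HLN}. The closest in-house analogue is the proof of Proposition \ref{lem 2.4}, which runs the same fractional integration by parts machinery but with the roles reversed (there the step function is the \emph{integrator} and the H\"older path is the integrand), and which is forced to carry a two-term bound, see \eref{eqn 2.8i}.

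Your overall strategy is sound, but the step you yourself single out as the crucial one is wrong as stated. The dichotomy ``either $\eta(u)=\eta(r)$ or $\eta(r)-\eta(u)\le r-u$'' fails when $u$ and $r$ lie in adjacent subintervals: take $u=t_{k}-\epsilon$ and $r=t_{k}$, so that $\eta(r)-\eta(u)=T/n$ while $r-u=\epsilon$ is arbitrarily small. Consequently the pointwise bound $|D^{\delta}_{s+}f(r)|\le C\|\partial V\|_{\infty}\|y\|_{s,t,\beta,n}(r-s)^{\beta-\delta}$ is false: the subinterval $[\eta(r)-T/n,\eta(r))$ contributes to $\int_s^r |f(r)-f(u)|(r-u)^{-\delta-1}du$ an additional singular term of order $\|\partial V\|_{\infty}\|y\|_{s,t,\beta,n}(T/n)^{\beta}(r-\eta(r))^{-\delta}$, which blows up as $r\downarrow\eta(r)$. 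This does not sink the argument: the extra term is locally integrable, and since $\beta'+\delta-1>0$ and $T/n\le t-s$ one checks
$(T/n)^{\beta}\sum_{j}\int_{t_j}^{t_{j+1}}(r-t_j)^{-\delta}(t-r)^{\beta'+\delta-1}\,dr\le C(t-s)^{\beta+\beta'}$,
so the final estimate survives --- but the two-term structure of the fractional derivative bound must be kept, exactly as in \eref{eqn 2.8i}, and your ``everything else is bookkeeping'' conceals precisely this delicate point. By contrast, your parenthetical alternative is the cleanest correct route: for any partition $(u_i)$ of $[s,t]$ the telescoping gives $\sum_i|h(u_{i+1})-h(u_i)|^{1/\beta}\le(\|\partial V\|_\infty\|y\|_{s,t,\beta,n})^{1/\beta}\sum_i(\eta(u_{i+1})-\eta(u_i))$, hence $\|V(y_{\eta(\cdot)})\|_{1/\beta\text{-var};[s,t]}\le\|\partial V\|_\infty\|y\|_{s,t,\beta,n}(t-s)^{\beta}$, and the Young--Lo\'eve inequality against $\|x\|_{1/\beta'\text{-var};[s,t]}\le\|x\|_{\beta'}(t-s)^{\beta'}$ finishes in one line (the integrand and the continuous integrator share no discontinuities). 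I would promote that from a parenthesis to the actual proof.
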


\begin{proof} See \cite{HLN}.
\end{proof}

\smallskip

Assume that $g=(g^1,\dots, g^m)$ and $g^i \in C^{\be } ([0,T]) $, $i=1,\dots, m$ for  $\beta   >\frac 12$. We fix   $n \in \mathbb{N}$ and   the partition of $[0, T]$ given by $t_i = i \frac Tn$, $ i=0,1,\dots, n $. Consider   the following differential   equation,
\begin{align}\label{a.1}
y_t = y_0   +\sum_{l=1}^N \sum_{\al\in \Gamma_l }  \int_{0}^{t}   \varphi_{ \al } (y_{\eta(s)})   dg_{\eta(s),s}^{\al}   , ~ t \in [0, T],
\end{align}
where $N \in \NN$ is some constant,    and $ \varphi_{\al}$, $  \al \in \cup_{l=1}^N \Gamma_l $ are   functions with values in     $\mathbb{R}^{d \times m}$. Recall that $\Gamma_l$ is the collection of multi-indices of length $l$ with elements in $\{1,\dots, m\}$.   We shall
 derive some estimates for the H\"older seminorm and supremum norm of the solution of this equation.

The constants appearing in the following results  depend on $\beta$,  $T$, $\|  \varphi_{\al}\|_{\infty}$ and $\| \partial\varphi_{\al}\|_{ \infty }$  for $\al \in \Gamma$ of length less or equal to $N$.

\begin{lemma}\label{prop 4.4}
Let $y$ be the solution of   equation \eref{a.1}. Assume that  $\varphi_{\al} \in C^{1}_{b}$ for $  \al \in \cup_{l=1}^N \Gamma_l $.
Then there exists a positive constant $C$ such that
\begin{eqnarray}\label{e 6.8}
 \| y \|_{\beta} & \leq&  C \|g\|_\be \vee \|g\|_\be^{1/\be +N-1},
\end{eqnarray}
and
\begin{eqnarray}\label{a.9}
 \| y \|_{\infty} & \leq&  |y_0|+ C \|g\|_\be \vee \|g\|_\be^{1/\be +N-1}.
\end{eqnarray}
Furthermore, there exists $K_{0}>0$ such that for $s,t \in [0,T]$ and $ \|g\|_{\beta} |t-s|^{\beta} \leq K_{0} $, we have
\begin{eqnarray}\label{e8.13}
\|y\|_{s,t,\beta} &\leq& K \|g\|_{\beta} \vee  \|g\|_{\beta}^{N}.
\end{eqnarray}

\end{lemma}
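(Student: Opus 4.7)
The plan is to follow the strategy of \cite{HLN}, first establishing the local H\"older estimate \eref{e8.13} on short intervals, and then bootstrapping to the global estimates \eref{e 6.8} and \eref{a.9}. Fix $s, t \in [0, T]$ with $s<t$ and write the increment as
\[
y_t - y_s = \sum_{l=1}^{N} \sum_{\al \in \Gamma_l} \int_s^t \varphi_\al(y_{\eta(u)})\, dg^\al_{\eta(u), u}.
\]
For the $l=1$ part, I would apply Lemma \ref{lem2.1} directly to each term, obtaining a bound of order $K[1 + \|y\|_{s,t,\beta,n}(t-s)^\beta]\|g\|_\beta (t-s)^\beta$. For the $l \geq 2$ terms, the integrand is constant on each discretization cell, so the integral reduces to the finite sum $\sum_i \varphi_\al(y_{t_i}) g^\al_{t_i, t_{i+1}}$. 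By Lemma \ref{lem 8.1i} each summand is bounded by $K\|g\|_\beta^l (T/n)^{l\beta}$, and summing over the $O(n(t-s)/T)$ relevant indices, together with the crucial inequality $l\beta > 1$ (valid since $l \geq 2, \beta > 1/2$), yields a bound of the form $C \|g\|_\beta^l (t-s)^\beta$ that is uniform in $n$.

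Combining these estimates leads to
\[
\|y\|_{s,t,\beta,n} \leq K\|g\|_\beta \bigl[1 + \|y\|_{s,t,\beta,n}(t-s)^\beta\bigr] + K \sum_{l=2}^N \|g\|_\beta^l\,.
\]
Choosing $K_0$ so that the absorption condition $K\|g\|_\beta (t-s)^\beta \leq 1/2$ is equivalent to $\|g\|_\beta (t-s)^\beta \leq K_0$, one obtains the local bound $\|y\|_{s,t,\beta,n} \leq C(\|g\|_\beta \vee \|g\|_\beta^N)$, which is precisely \eref{e8.13}.

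For the global sup-norm bound \eref{a.9} I would use a greedy partition $0 = \tau_0 < \tau_1 < \dots < \tau_{N_0} = T$ with $\|g\|_\beta(\tau_{i+1}-\tau_i)^\beta = K_0$ (assuming $\|g\|_\beta T^\beta > K_0$; otherwise the local estimate applies on all of $[0,T]$ and gives the result trivially). The number of intervals satisfies $N_0 \lesssim T\|g\|_\beta^{1/\beta}$, and on each the local bound produces an increment of size at most $C(\|g\|_\beta \vee \|g\|_\beta^N)(\tau_{i+1}-\tau_i)^\beta = C K_0(1 \vee \|g\|_\beta^{N-1})$. Telescoping then delivers $\|y\|_\infty \leq |y_0| + C\|g\|_\beta^{1/\beta + N - 1}$ for $\|g\|_\beta$ large. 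For the global H\"older bound \eref{e 6.8}, I would split any $[s,t]$ into subintervals of length $\Delta = (K_0/\|g\|_\beta)^{1/\beta}$, apply the local bound on each piece, and use $\sum_i(\tau_{i+1}-\tau_i)^\beta \leq ((t-s)/\Delta)^{1-\beta}(t-s)^\beta$ to convert the number of pieces into the extra factor $\|g\|_\beta^{1/\beta - 1}$, thus recovering the stated exponent $1/\beta + N - 1$.

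The main obstacle to watch is ensuring that the bound on $\sum_i |g^\al_{t_i,t_{i+1}}|$ for $l \geq 2$ is both uniform in $n$ and scales like $(t-s)^\beta$. This hinges on the hypothesis $\beta > 1/2$, which makes $l\beta > 1$ for every $l \geq 2$ and allows the excess factor $(t-s)^{1-\beta}$ produced by the grid counting argument to be absorbed into an $n$-independent constant depending only on $T$.
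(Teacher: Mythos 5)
Your proposal follows the paper's own proof almost step for step: the same splitting of the increment into the $l=1$ part (handled by Lemma \ref{lem2.1}) and the $l\geq 2$ parts (handled by Lemma \ref{lem 8.1i} together with $l\beta>1$), the same self-bounding inequality for the grid seminorm $\|y\|_{s,t,\beta,n}$, the same absorption on short intervals, and the same chaining with the factor $(T/\Delta)^{1-\beta}\lesssim \|g\|_{\beta}^{1/\beta-1}$ that produces the exponent $1/\beta+N-1$. Deriving $\|y\|_{\infty}$ by telescoping rather than from $\|y\|_{\beta}$ is an immaterial difference.

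There is, however, one regime your argument does not cover and which the paper treats as a separate case: when the mesh is coarser than the absorption length, i.e.\ $T/n\gtrsim \|g\|_{\beta}^{-1/\beta}$. Your local estimate is obtained by absorbing $\|y\|_{s,t,\beta,n}$, and Lemma \ref{lem2.1} controls the integrand only through the values of $y$ at points of $D$, so the absorption inequality relates grid increments to the grid seminorm and yields \eref{e8.13} only for $s,t\in D$ with $\|g\|_{\beta}(t-s)^{\beta}\leq K_{0}$. If $T/n$ exceeds $(K_{0}/\|g\|_{\beta})^{1/\beta}$ there are no such nontrivial grid pairs, the local bound is vacuous, and the greedy partition cannot be chained through grid points; your parenthetical ``otherwise the local estimate applies on all of $[0,T]$'' addresses the opposite degenerate case $\|g\|_{\beta}T^{\beta}\leq K_{0}$, not this one. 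The fix is the direct estimate you already wrote down: when $n\lesssim\|g\|_{\beta}^{1/\beta}$ one sums the cell-by-cell bounds $C\|g\|_{\beta}^{l}(T/n)^{l\beta}$ over all $n$ cells; the $l=1$ term then contributes $C\|g\|_{\beta}n^{1-\beta}\leq C\|g\|_{\beta}^{1/\beta}$, which is dominated by $\|g\|_{\beta}\vee\|g\|_{\beta}^{1/\beta+N-1}$. A second, routine, omission is the passage from the grid seminorm to the full H\"older seminorm for arbitrary $s,t\in[0,T]$, which requires the within-cell bound $\|y\|_{s,t,\beta}\leq C\sum_{l}\|g\|_{\beta}^{l}$ for $s,t$ in the same discretization cell.
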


\begin{proof}   Let $s,t \in [0, T]$ be such that $s <t$.
It follows from \eref{a.1}   that
\begin{eqnarray}\label{a.5i}
  |y_t -y_s| &\leq &
 \sum_{j=1}^m \left|
 \int_{s}^{t}   \varphi_{j } (y_{\eta(u)})    dg_u^{j}
 \right|
 \nonumber
\\
&& +
  \sum_{l=2}^N \sum_{\al\in \Gamma_l } \left| \int_{s}^{t}   \varphi_{\al } (y_{\eta(u)})    dg_{\eta(u),u}^{\al}\right|
 \,.
 \end{eqnarray}

We first derive an estimate for  $\|y\|_{\be,n}$. Assume that $s,t \in D$, that is, $s = \eta(s)$ and $t=\eta(t)$.  Applying Lemma \ref{lem2.1} to  the first term on the right-hand side of the above inequality yields
\begin{align}\label{a.7ii}
\sum_{j=1}^m \left|
 \int_{s}^{t}  \varphi_{j } (y_{\eta(s)})    dg_s^{j}
 \right|
 \leq C(1+ \|y\|_{s,t,\beta, n}(t-s)^{\beta}) \|g\|_{\beta}(t-s)^{\beta}.
\end{align}
On the other hand,   
for  $\al\in \Ga_l$   with $l=1,\dots, N$, we have
 \begin{eqnarray}
 &&  \left| \int_{s}^{t}   \varphi_{\al } (y_{\eta(u)})     dg_{\eta(u),u}^{\al}\right|
 \nonumber
    \leq
    \sum_{k=   ns/T   }^{  nt/T   -1}     \left| \int_{t_k   }^{t_{k+1}  }   \varphi_{\al } (y_{t_{k}})    dg_{\eta(u),u}^{\al}\right|
\nonumber
\\
  &&\qquad \quad \leq
  C
    \sum_{k=ns/T}^{nt/T-1} |g_{t_k, t_{k+1}}^{\al} |
  \leq
 C    {n(t-s)}
   (  {n^{ - \beta} }
  \|g\|_{\beta}   )^l
   =
 C  n^{1- \be l}  { (t-s)}
    \|g\|_{\beta}^l
    \, ,
    \label{a.15i}
 \end{eqnarray}
 where the third inequality follows from Lemma \ref{lem 8.1i}.
So, the second term on the right-hand side of \eref{a.5i}   is bounded by
  \begin{eqnarray}
  \sum_{l=2}^N \sum_{\al\in \Gamma_l } \left| \int_{s}^{t} \varphi_{ \al } (y_{\eta(u)})    dg_{\eta(u),u}^{\al}\right|
  \leq
    C    { (t-s)}
   \sum_{l=2}^N \|g\|_{\beta}^l.
  \label{a.8ii}
 \end{eqnarray}

Substituting   \eref{a.7ii} and \eref{a.8ii} into \eref{a.5i},  we obtain
 \begin{eqnarray*}
 |y_t-y_s| \leq   C\|y\|_{s,t,\be,n}\|g\|_\be (t-s)^{2\beta} +  C    { (t-s)^{\beta}}
 \sum_{l=1}^N
     \|g\|_{\beta}^l\,.
\end{eqnarray*}
Dividing both sides of the above inequality by $(t-s)^{\beta}$, and then taking the   seminorm $\|\cdot\|_{s,t,\beta, n}$ on the left-hand side we obtain
 \begin{eqnarray}\label{a.10}
 \|y\|_{s,t,\beta, n} \leq   C\|y\|_{s,t,\be,n}\|g\|_\be (t-s)^{\beta} +  C
 \sum_{l=1}^N
     \|g\|_{\beta}^l\,.
\end{eqnarray}

If we assume that   $T/n \leq \frac12   (2C \|g\|_{\be})^{-1/\beta}$, then we can find an integer $k_{0}$ such that
 \begin{eqnarray}\label{e.9.17}
 \frac12   (2C \|g\|_{\be})^{-1/\beta} \leq k_{0}T/n \leq   (2C  \|g\|_{\be})^{-1/\beta}.
 \end{eqnarray}
Denote $\Delta:= k_{0}T/n$ and take $u,v $ such that $ u-v =\Delta  $, then from the second inequality in \eref{e.9.17} we have
\begin{eqnarray*}
C\|g\|_{\be}(u-v)^{\be}  &\leq& \frac12.
\end{eqnarray*}
Applying this inequality to \eref{a.10} we obtain
 \begin{eqnarray*}
 \|y\|_{v,u,\beta, n} \leq   \frac12 \|y\|_{v,u,\be,n}  +  C
 \sum_{l=1}^N
     \|g\|_{\beta}^l\,,
\end{eqnarray*}
or
 \begin{eqnarray}\label{e7.18}
 \|y\|_{v,u,\beta, n} \leq    2 C
 \sum_{l=1}^N
     \|g\|_{\beta}^l\,.
\end{eqnarray}
This inequality provides the upper bound for $ \|y\|_{v,u,\beta, n} $ for $u,v \in D: v-u=\Delta$.

For any $s,t \in D$ such that $t-s>\Delta$.
\begin{eqnarray*}
\frac{|y_{t}-y_{s}|}{(t-s)^{\beta}} &\leq& \frac{|y_{s+\Delta}-y_{s}|}{(t-s)^{\beta}} + \frac{|y_{s+2\Delta}-y_{s+\Delta}|}{(t-s)^{\beta}}+\cdots+ \frac{|y_{t}-y_{s+\lfloor \frac{t-s}{\Delta} \rfloor \Delta}  |}{(t-s)^{\beta}}
\\
&\leq & ( \lfloor \frac{t-s}{\Delta} \rfloor +1 ) \sup_{v\in [0,T-\Delta]}\|y\|_{v,v+\Delta,\beta,n} \frac{\Delta^{\beta}}{ (t-s)^{\beta} }\,.
\end{eqnarray*}
Taking the supremum over $s,t \in D$ on  both sides of the above inequality  and taking into account \eref{e7.18}, we obtain
 \begin{eqnarray*}
 \|y\|_{ \beta, n}   \leq C  (\frac T \Delta + 1 )^{1-\beta} (   2 C
 \sum_{l=1}^N
     \|g\|_{\beta}^l) \,.
\end{eqnarray*}
From the first inequality in  \eref{e.9.17} we have
$$ (\frac T  \Delta + 1 )^{1- \beta} \leq C  (   \|g\|_{\be}^{ 1/\beta -1}+1).$$
Therefore,
 \begin{eqnarray}\label{a.19}
 \|y\|_{ \beta, n} &\leq&  C (  \|g\|_\be^{1/\be-1}+1) (
 \sum_{l=1}^N
     \|g\|_{\beta}^l)
     \nonumber
     \\
     &\leq& C \|g\|_\be \vee \|g\|_\be^{1/\be +N-1} \,.
\end{eqnarray}
 If we assume that  $T/n \geq \frac12   (2C \|g\|_{\be})^{-1/\beta}$ or $ n \leq 2T (2C \|g\|_{\be})^{1/\be}   $\,.
Applying \eref{a.15i} to  \eref{a.5i} we obtain
\begin{eqnarray*}
 |y_t -y_s| \leq &
  \sum_{l=1}^N
    C  n^{1- \be l}  { (t-s)}
    \|g\|_{\beta}^l
     \,.
\end{eqnarray*}
Dividing both sides of the above inequality by $(t-s)^{\be}$, and then taking the supremum over all $s,t \in D$, we obtain
  \begin{eqnarray*}
 \|y \|_{\be,n} \leq &
   \sum_{l=1}^N
    C  n^{1- \be l}
    \|g\|_{\beta}^l
     \,.
\end{eqnarray*}
Since $ n \leq 2T(2C \|g\|_{\be})^{1/\be}   $, we have
\begin{eqnarray}\label{a.20}
 \|y \|_{\be,n} \leq &
  C \|g\|_{\be}^{1/\beta}
 +
   \sum_{l=2}^N
    C
    \|g\|_{\beta}^l
     \,.
\end{eqnarray}
Combining  \eref{a.19} and \eref{a.20} we obtain the estimate
     \begin{eqnarray}\label{a.21i}
 \|y\|_{ \beta, n}   \leq C \|g\|_\be \vee \|g\|_\be^{1/\be +N-1} \,.
\end{eqnarray}

It follows from inequality \eref{a.5i} that for $s,t \in [t_{k},t_{k+1}]$, $k=0,1,\dots, n-1$,
\begin{eqnarray}\label{a.21}
\|y\|_{s,t,\be } \leq \sum_{l=1}^{N} \|g\|_{\beta} .
\end{eqnarray}
For any $s,t \in [0,T]$, we have
 \begin{eqnarray*}
\frac{|y_t - y_s|}{|t-s|^\be} \leq \frac{|y_{\eta(s)+\frac Tn} - y_s|}{|\eta(s)+\frac Tn - s |^\be} +
\frac{| y_{ \eta(t) } - y_{\eta(s)-\frac Tn}  |}{| \eta(t) - \eta(s)+\frac Tn |^\be} +
\frac{| y_t - y_{ \eta(t) }    |}{| t- \eta(t)   |^\be}.
\end{eqnarray*}
We apply  \eref{a.21} to the first and third term on  the
  right-hand side of the above inequality and apply \eref{a.21i} to the second term    to obtain
 \begin{eqnarray*}
\frac{|y_t - y_s|}{|t-s|^\be} \leq  C \|g\|_\be \vee \|g\|_\be^{1/\be +N-1}
.
\end{eqnarray*}
The estimate \eref{e 6.8} then follows by taking the supremum  over $s,t \in [0,T]$ on the above left-hand side.

The estimate of $\|y\|_{\infty}$ follows immediately from   \eref{e 6.8}. Indeed, by the definition of $\| \cdot \|_{\be }$ we have
 \begin{eqnarray*}
|y_t| \leq |y_0|+   T^\be ( C \|g\|_\be \vee \|g\|_\be^{1/\be +N-1} ).
\end{eqnarray*}
Taking the supremum of $|y_t|$ over $t\in [0,T]$ we obtain \eref{a.9}.

Finally, it is easy to derive   inequality \eref{e8.13}  from \eref{a.10}.
\end{proof}

\end{document}